\DeclareMathOperator{\Hes}{\rm Hes}
\DeclareMathOperator{\hes}{\rm hes}
\DeclareMathOperator{\Ric}{\rm Ric}
\DeclareMathOperator{\tr}{\rm tr}
\DeclareMathOperator{\spanned}{\rm span}
\newcommand{\nh}{\nabla h}
\newcommand{\KN}{\mathbin{\bigcirc\mspace{-15mu}\wedge\mspace{3mu}}}
\newtheorem{theorem}{Theorem}[section]
\newtheorem{lemma}[theorem]{Lemma}
\newtheorem{corollary}[theorem]{Corollary}
\newtheorem{example}[theorem]{Example}
\theoremstyle{definition}
\theoremstyle{remark}
\newtheorem{remark}[theorem]{Remark}
\newcommand\restr[2]{{
  \left.\kern-\nulldelimiterspace 
  #1 
  \vphantom{\big|} 
  \right|_{#2} 
  }}
\begin{document}
\title{The vacuum weighted Einstein field equations }
\author{M. Brozos-V\'azquez, D. Moj\'on-\'Alvarez}
\address{MBV: CITMAga, 15782 Santiago de Compostela, Spain}
\address{\phantom{MBV:}  
	Universidade da Coru\~na, Campus Industrial de Ferrol, Department of Mathematics, 15403 Ferrol, Spain}
\email{miguel.brozos.vazquez@udc.gal}
\address{DMA: CITMAga, 15782 Santiago de Compostela, Spain}
\address{\phantom{DMA:}
	University of Santiago de Compostela,
15782 Santiago de Compostela, Spain}
\email{diego.mojon@rai.usc.es}
\thanks{Research partially supported by grants PID2022-138988NB-I00 funded by MICIU/AEI/10.13039/501100011033 and by ERDF, EU, and ED431F 2020/04 (Xunta de Galicia, Spain); and by contract FPU21/01519 (Ministry of Universities, Spain).}
\subjclass[2020]{53B30, 53C50, 53C21}
\date{}
\keywords{Smooth metric measure space, stationary action, weighted Einstein field equations, Kundt spacetime, multiply warped product.}

\maketitle

\begin{abstract} 
On a spacetime $(M,g)$ endowed with a density function $h$, we consider the vacuum weighted Einstein field equations: 
\[h\rho-\Hes_h+\Delta h g=0.\]
First, it is shown that the equation characterizes critical metrics for an appropriate action. Then, after describing locally conformally flat solutions in arbitrary dimension, four-dimensional solutions with harmonic curvature are classified.
\end{abstract}


\section{Introduction}

A usual spacetime is generalized by introducing a density function $h$ which alters the Riemannian volume element $dvol_g$. Thus, we consider a triple $(M,g,h\, dvol_g)$ (or $(M,g,h)$ for short), where $(M,g)$ is a Lorentzian manifold and $h$ is a positive smooth function on $M$. We assume the density to be non-constant on any open subset of $M$ and refer to such a triple as a smooth metric measure spacetime (SMMS). The presence of the density gives rise to new geometric objects that reflect its influence on the geometry of the spacetime. 

The usual Einstein tensor, defined by $G=\rho-\frac{\tau}{2} g$ (where $\rho$ stands for the Ricci tensor and $\tau$ for the scalar curvature), has among its properties being symmetric, divergence-free, concomitant of the metric tensor $g$ and its first two derivatives and linear in the second derivatives of $g$. Furthermore, these features characterize a tensor of the form $G+\Lambda g$, where $\Lambda$ is a constant (see, for example, \cite{Lovelock}). Following this pattern, the weighted Einstein tensor
\[
G^h=h\rho-\Hes_h+\Delta h g
\]
was introduced in \cite{Brozos-Mojon} as a tensor which is symmetric, divergence-free if the scalar curvature is constant, concomitant of the metric, the density function and their first two derivatives, and linear on the second derivatives of $g$ as well as the density function. Here, $\Hes_h$ and $\Delta h$ denote the Hessian tensor and the Laplacian of $h$, respectively. From the previous definition, vacuum weighted Einstein field equations  on a spacetime with density are regarded as $G^h=0$. 

Besides this motivation for equation $G^h=0$, it appears as a very natural second order equation with geometric interest. Indeed, this equation was previously considered in Riemannian signature, both from a purely geometric perspective and for reasons coming from General Relativity on static spacetimes. Although the equation itself is related to works such as \cite{Bla,Lic2,Marsden} through the variation of the scalar curvature, as we shall see below, it was from the work of Kobayashi and Lafontaine \cite{Kobayashi, Lafontaine} that its solutions were studied more systematically in the Riemannian setting (see also \cite{Shen,Qing-Yuan}).

From a variational point of view, Einstein spacetimes arise as critical metrics for the  Einstein-Hilbert action $\mathcal{S}=\int_M\tau \, dvol_g$. Although the previous motivation for the definition of the weighted Einstein tensor is based on reasonable properties that suggest its suitability for a context with the presence of a density, it lacks the advantages of the variational approach, which would provide a useful action to work with. Thus, in order to introduce this formalism in the weighted setting, it would be desirable to find an appropriate action that recovers the vacuum weighted Einstein field equations  $G^h=0$. Hence, we complete the motivation of the definition of $G^h$ by  introducing this new perspective as follows.

\subsection{The weighted Einstein field equation: variational approach}\label{sect:variational-approach}
Consider a smooth manifold $M$ and the space $\mathcal{M}$ of Lorentzian metric measure structures on $M$, i.e. $\mathcal{M}=\text{Lor}(M)\times \mathcal{C}^\infty(M;\mathbb{R}^+)$, where $\text{Lor}(M)$ is the space of Lorentzian metrics on $M$ and $\mathcal{C}^\infty(M;\mathbb{R}^+)$ is the space of positive smooth functions on $M$ playing the role of a density. Assume along this section that the density function has compact support, so that all integrands do. Now, define the {\it weighted Einstein-Hilbert functional} as the functional $\mathcal{S}:\mathcal{M}\rightarrow \mathbb{R}$ given by
\begin{equation}\label{eq:HE-action-weighted}
	(g,h)\mapsto \mathcal{S}_{(g,h)}=\int_M\tau h\, dvol_g,
\end{equation}
where $dvol_g=\sqrt{|g|}\, dx_1\wedge\cdots\wedge dx_n$ is the usual Riemannian volume element, with $|g|=-\det g_{ij}$. For simplicity, we will assume the dependence on the metric and we will write $\mathcal{S}_{(g,h)}=\mathcal{S}_h$ in order to highlight the fact that we work in a weighted setting. Note that, for $h\equiv 1$, we recover the Einstein-Hilbert functional: $\mathcal{S}_1=S$. Nevertheless, we are working with SMMSs given by densities $h$ which are non-constant on any open subset of $M$, so the behavior of $\mathcal{S}_h$ is different from its unweighted counterpart.

A defining feature of a SMMS $(M,g,h)$ as a weighted geometric object is its weighted volume element $dV_h=h\, dvol_g$. Hence, it is natural to pose the variational problem of finding the critical points of $\mathcal{S}_h$, constrained to variations of the metric-measure structure of $(M,g,h)$ which keep the weighted volume element invariant at each point of the manifold. By a variation of the metric-measure structure, we mean a simultaneous variation of both the metric and the density function, with the same variation parameter $t$:
\begin{equation}\label{eq:met-meas-var}
	g[t]=g+t\bar g, \qquad h[t]=h+t\bar h, \qquad dV_h[t]=h[t]dvol_{g[t]}.
\end{equation}
In order to maintain the compact support of the integrands, we shall assume that any variations of $h$ have compact support as well. Moreover, in case of the presence of a boundary, the variations and their first derivatives must vanish on it (see, for example, \cite{Quiros}).



For a variation of the metric-measure structure \eqref{eq:met-meas-var}, through the well-known expression $\delta\sqrt{|g|}=-\frac{1}2\sqrt{|g|}\tr \bar g$, the invariance of the weighted volume element reads
	\[
	\begin{array}{rcl}
		\delta dV_h=\restr{\frac{d}{dt}}{t=0}dV_h[t]&=&\bar h\, dvol_g+h\restr{\frac{d}{dt}}{t=0}dvol_{g[t]}\\
		\noalign{\medskip}
		&=&\left(\bar h-\frac{1}{2}h\,  \mathrm{tr}\,\bar g\right) \, dvol_g=0,
	\end{array}
	\]
	so we have the pointwise condition $\left(\bar h-\frac{1}{2}h\,  \mathrm{tr}\,\bar g\right)=0$. Also due to this invariance, we compute the variation of the functional,
	\[
	\delta \mathcal{S}_h=\restr{\frac{d}{dt}\mathcal{S}_h}{t=0}=\int_M \restr{\frac{d\tau}{dt}}{t=0}hdvol_g+\int_M \tau  \delta dV_h=\int_M \restr{\frac{d\tau}{dt}}{t=0}hdvol_g.
	\]
The last integrand is the linearization of the scalar curvature $D\tau=\restr{\frac{d\tau}{dt}}{t=0}=-\Delta \operatorname{tr}\{\bar g\}+\operatorname{div} \operatorname{div} \bar g-\bar g\cdot\Ric$ (see \cite{Bla,Lic2}), where $\operatorname{div}$ stands for the usual divergence.  Using the formal adjoint of $D\tau$ with respect to the $L^2$-inner product, we can write
\[
\delta \mathcal{S}_h=\int_M \restr{\frac{d\tau}{dt}}{t=0}hdvol_g=\int_M D\tau (\bar g) hdvol_g=\int_M \langle D\tau^\star (h),\bar g\rangle dvol_g,
\] 
where $D\tau^\star(h)=-h  \rho+\Hes_h-\Delta h g$ is derived from the formal adjoints of $\operatorname{div} $ and $\Delta$. Hence
\[
\delta \mathcal{S}_h=\int_M \langle -h  \rho+\Hes_h-\Delta h g,\bar g\rangle dvol_g.
\]
Note that we have the freedom to choose any variation of the metric $\bar g$, since there always exists a variation of the density, given by $\bar h=\frac{1}{2}h\,  \mathrm{tr}\,\bar g$, which preserves the weighted volume element. Hence, the left-hand side of the product $\langle h\rho -\Hes_h+\Delta h g,\bar g\rangle$ must vanish for all critical points of $\mathcal{S}_h$, restricted to variations of the metric-measure structure preserving the weighted volume element. It follows that these critical points satisfy the equation
\begin{equation}\label{eq:vacuum-Einstein-field-equations}
		h\rho -\Hes_h+\Delta h g=0,
\end{equation}
which corresponds to the vacuum weighted Einstein field equations  $G^h=0$.

\subsection{Main results}
The general objective of this paper is to further understand the {\it solutions} of the vacuum weighted Einstein field equations, i.e., the SMMSs $(M,g,h)$ such that \eqref{eq:vacuum-Einstein-field-equations} is satisfied on $M$ for the metric-measure structure $(g,h)$. Thus, in Section~\ref{sect:locally-conf-flat} and beyond, we consider different kinds of solutions and study their geometric features.  Our analysis is local in nature and, in the following results, we assume that $\nh\neq 0$ and that the causal character of $\nh$ is constant in $M$. Hence, for general solutions, results should be applied to suitable open sets where these conditions are met. 

Solutions can be split into two families, according to the causal character of $\nh$: {\it isotropic} solutions are those with $\nh$ lightlike, while {\it non-isotropic} solutions are those with $\nh$ timelike or spacelike. The approach in treating an equation like \eqref{eq:vacuum-Einstein-field-equations} is different for the isotropic and non-isotropic cases, as are often the geometric features of the resulting solutions.

A first approach to isotropic solutions was made in \cite{Brozos-Mojon}, finding that all of them have nilpotent Ricci operator. Moreover, isotropic solutions are Kundt spacetimes, and if the Ricci operator vanishes or is two-step nilpotent, then the underlying Lorentzian manifold is a Brinkmann wave. In this work, we aim to extend this study to non-isotropic solutions, while also further understanding the isotropic case. Non-isotropic solutions present a less rigid geometric structure than their isotropic counterparts, and thus fixing their geometry becomes, in general, an unfeasible task. Since equation~\eqref{eq:vacuum-Einstein-field-equations} provides direct information on the Ricci tensor, it is natural to impose geometric conditions related to the Weyl tensor. First, we go over the locally conformally flat case, which turns out to be quite restrictive, as illustrated by the following theorem. 


\begin{theorem}\label{th:loc-conf-flat-ndim}
Let $(M,g,h)$ be an $n$-dimensional locally conformally flat smooth metric measure spacetime. Then, $(M,g,h)$ is a solution of the vacuum weighted Einstein field equations~\eqref{eq:vacuum-Einstein-field-equations} if and only if one of the following is satisfied: 
\begin{enumerate}
	\item  $g(\nh,\nh)\neq 0$ and $(M,g)$  is locally isometric to a warped product $(I\times N,dt^2\oplus \varphi^2 g^N)$, where $I\subset \mathbb{R}$ is an open interval, $N$ is an $(n-1)$-dimensional manifold of constant sectional curvature, and $h(t)$ and $\varphi(t)$ satisfy the following system of ODEs:
	\begin{equation}\label{eq:weighted-field-ODEs}
		\begin{array}{rcl}
			0&=&h'\varphi'-h\varphi'', \\
			\noalign{\medskip}
			0&=&h''+(n-1)h \frac{\varphi''}{\varphi}+ \varepsilon \frac{\tau}{n-1}h.\\
			\noalign{\medskip}
		\end{array}
	\end{equation}
	
\item $g(\nh,\nh)=0$ and $(M,g,h)$ is a plane wave. Moreover, there exist local coordinates $\{u,v,x_1,\dots, x_n\}$ such that the metric is given by
\[
g(u,v,x_1,\dots,x_{n-2})= 2 dv du+F(v,x_1,\dots, x_{n-2}) dv^2+\sum_{i=1}^{n-2} dx_i^2,
\]
where $F(v,x_1,\dots, x_{n-2})=-\frac{h''(v)}{ (n-2) h(v)} \sum_{i=1}^{n-2} x_i^2+\sum_{i=1}^{n-2} b_i(v) x_i+c(v)$. 
\end{enumerate}
\end{theorem}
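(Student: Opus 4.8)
The plan is to split the argument according to the causal character of $\nh$, which is assumed constant, and to exploit the fact that local conformal flatness forces the Schouten (or Cotton) tensor to behave rigidly. The starting point is equation~\eqref{eq:vacuum-Einstein-field-equations}, which gives $\rho = \tfrac1h(\Hes_h - \Delta h\, g)$; in particular the Ricci operator is, up to the scalar $\tfrac{\Delta h}{h}$, determined by $\tfrac1h\Hes_h$. First I would compute the Cotton tensor of such a metric: since $(M,g)$ is locally conformally flat of dimension $n\ge 4$, the Cotton tensor vanishes, which is equivalent to the Schouten tensor being a Codazzi tensor. Writing the Schouten tensor in terms of $\Hes_h$, $\Delta h$ and $\tau$ via \eqref{eq:vacuum-Einstein-field-equations}, the Codazzi condition becomes a third-order PDE relating $\nabla\Hes_h$ to $dh$ and $d\tau$; after contracting and using the second Bianchi identity (which controls $d\tau$ in terms of $\diver\rho$) this should collapse to an algebraic-looking relation forcing $\Hes_h$ to be a combination of $g$ and $dh\otimes dh$, i.e. $\Hes_h = \alpha\, g + \beta\, dh\otimes dh$ for suitable functions $\alpha,\beta$.

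In the non-isotropic case $g(\nh,\nh)\neq 0$, once $\Hes_h = \alpha g + \beta\, dh\otimes dh$ is established, the level sets of $h$ are totally umbilical hypersurfaces and $\nh/|\nh|$ is a geodesic unit (or unit-spacelike/timelike) vector field; this is the classical mechanism by which one produces a warped-product decomposition $I\times N$ with $g = \varepsilon\, dt^2 \oplus \varphi(t)^2 g^N$, $t$ being (a reparametrization of) $h$ and $\varepsilon = \mathrm{sign}\, g(\nh,\nh)$. Then I would feed this warped-product ansatz back into \eqref{eq:vacuum-Einstein-field-equations}: the mixed and tangential components of $\rho$ and $\Hes_h$ for a warped product are standard, and local conformal flatness of a warped product over a one-dimensional base forces the fibre $N$ to have constant sectional curvature (this is a well-known fact about warped products, which I would cite or quote). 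Matching the $dt^2$-component, the $g^N$-component, and the off-diagonal component of \eqref{eq:vacuum-Einstein-field-equations} yields exactly the two ODEs in \eqref{eq:weighted-field-ODEs}: the first, $h'\varphi' - h\varphi'' = 0$, comes from the vanishing of the $tt$-component combined with the $tt$-part of $\Hes_h$, and the second from the tangential component together with $\Delta h$. Conversely, one checks directly that any warped product with $N$ of constant curvature and $h,\varphi$ solving \eqref{eq:weighted-field-ODEs} satisfies \eqref{eq:vacuum-Einstein-field-equations}, giving the "if" direction.

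In the isotropic case $g(\nh,\nh)=0$, I would instead use the results already invoked in the introduction: an isotropic solution is a Kundt spacetime with nilpotent Ricci operator, and one can choose Kundt-type coordinates $\{u,v,x_1,\dots,x_{n-2}\}$ in which $g = 2\,dv\,du + F\,dv^2 + \sum dx_i^2$ with $\nh$ proportional to $\partial_u$, so that $h = h(v)$. Imposing local conformal flatness on a Kundt metric of this pp-wave form forces $F$ to be quadratic in the $x_i$ (this is the standard characterization of conformally flat pp-waves / plane waves), i.e. $F = \lambda(v)\sum x_i^2 + \sum b_i(v)x_i + c(v)$; then equation~\eqref{eq:vacuum-Einstein-field-equations} has only one nontrivial component, the $vv$-component, which reads $-(n-2)\lambda(v)\,h(v) = h''(v)$, pinning down $\lambda(v) = -\tfrac{h''(v)}{(n-2)h(v)}$ and producing the stated normal form; the plane-wave property is then immediate from the form of $F$. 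The main obstacle I anticipate is the first step in the non-isotropic case: extracting, from the Codazzi condition on the Schouten tensor, that $\Hes_h$ has no component transverse to $\mathrm{span}\{g, dh\otimes dh\}$ — the bookkeeping of third-order terms is delicate, and one must be careful that the argument does not secretly assume $\nh\neq0$ is non-null in a way that also covers, or is cleanly separated from, the isotropic branch. Everything after that decomposition is essentially a substitution into known warped-product and pp-wave curvature formulas.
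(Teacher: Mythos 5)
Your overall route coincides with the paper's: split on the causal character of $\nh$, show in the non-isotropic case that the level sets of $h$ are umbilical and the $\nh$-direction totally geodesic so that $(M,g)$ is a warped product over an interval with constant-curvature fibre, then read off the ODEs; in the isotropic case reduce to a plane wave and extract the single $vv$-equation. The step you flag as the main obstacle — collapsing the Codazzi condition on the Schouten tensor to $\Hes_h=\alpha\,g+\beta\,dh\otimes dh$ — is exactly where the paper supplies the mechanism you are missing: from the field equation one has the commutation identity $(\nabla_Z\Hes_h)(X,Y)-(\nabla_Y\Hes_h)(X,Z)=R(\nh,X,Y,Z)$, and substituting $\Hes_h=h(\rho-2Jg)$ turns this into $R(\nh,X,Y,Z)=(\rho-2Jg)\wedge dh(X,Y,Z)-h\,dP(X,Y,Z)$. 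Decomposing $R=P\KN g+W$ and using $W=0$ and $dP=0$ (both available since the metric is locally conformally flat) makes everything algebraic; the paper packages this as the vanishing of the augmented Cotton tensor $D=h\,dP+\iota_{\nh}W$ together with the identity $(n-2)D=(n-1)\rho\wedge dh+g\wedge\iota_{\nh}\rho-\tau\,g\wedge dh$, from which $\rho(E_1,E_i)=0$ and $\rho(E_i,E_j)\propto g(E_i,E_j)$ on $\nh^\perp$ follow by evaluating on a frame. So your "algebraic-looking relation" does materialize, but only after converting the third-order terms into curvature via the Ricci identity; a direct attack on $\nabla\Hes_h$ without that conversion will not close. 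One further point in this branch: umbilicity of the level sets gives a priori only a twisted product, and you need $\nabla^\perp\nh=0$ (which follows from $\Hes_h(E_1,E_i)=0$) to invoke the Ponge--Reckziegel criterion and reduce to a genuine warped product with $\varphi=\varphi(t)$.

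In the isotropic case there is a genuine gap: a Kundt spacetime does not in general admit coordinates with $g=2\,dv\,du+F\,dv^2+\sum dx_i^2$; that is the $pr$-wave normal form, which presupposes a parallel (or at least recurrent) lightlike direction and $R(V^\perp,V^\perp,\cdot,\cdot)=0$. To justify it you must first show that the Ricci operator is $2$-step nilpotent (in the paper this comes from evaluating the vanishing tensor $D$ on $(U,U,X_1)$, which kills the coefficient $\mu$ of the $3$-step part), so that the manifold is a Brinkmann wave, and then use $W=0$, hence $R=\frac{1}{n-2}\rho\KN g$, to verify $R(\nh^\perp,\nh^\perp,\cdot,\cdot)=0$ and conclude it is a $pp$-wave; only then do your coordinates exist and the rest of your computation (quadratic $F$, the $vv$-equation $-(n-2)\lambda h=h''$) goes through as stated.
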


The core of this work is the analysis of the less rigid condition $\operatorname{div} W=0$ (i.e., the Weyl tensor is harmonic), for 4-dimensional solutions. Since solutions have constant scalar curvature (see Lemma~\ref{le:const-sc}), $\operatorname{div} W=0$ is equivalent to the harmonicity of the curvature tensor. This condition allows for more geometric flexibility than local conformal flatness, and solutions with different Jordan forms of the Ricci operator arise. Each of these forms requires a different approach, but some geometric features are common to all solutions. Indeed, the Ricci eigenvalues are real (vanishing in the isotropic case), and all solutions with non-diagonalizable Ricci operator are realized on Kundt spacetimes. The following result provides a more detailed description of the underlying Lorentzian manifold.

\begin{theorem}\label{th:classification-harmonic}
Let $(M,g,h)$ be a $4$-dimensional solution of the vacuum weighted Einstein field equations~\eqref{eq:vacuum-Einstein-field-equations} such that $(M,g)$ has harmonic curvature tensor (not locally conformally flat). Assume that the Jordan normal form of the Ricci operator $\Ric$ is constant in $M$. Then, the eigenvalues of $\Ric$ are real and one of the following is satisfied:
\begin{enumerate}
\item  $\Ric$ diagonalizes on $(M,g)$ and $g(\nh,\nh)\neq 0$. Furthermore, there exists an open dense subset $M_{\Ric}$ of $M$ such that, for every $p\in M_{\Ric}$, $(M,g)$ is isometric on a neighborhood of $p$ to:
	\begin{enumerate}
		\item A direct product $I_2\times \tilde M$, where $\tilde M=I_1\times_\xi N$ is a warped product $3$-dimensional solution with $\tilde \tau=0$ and $N$ a surface of constant Gauss curvature.
				
		\item A direct product $N_1\times N_2$ of two surfaces of constant Gauss curvature $\frac{\kappa}{2}$ and $\kappa$, respectively.
	\end{enumerate}
		
	\item $(M,g)$ is a Kundt spacetime and, depending on the causal character of $\nh$, one of the following applies:
	\begin{enumerate}
		\item If $g(\nh,\nh)=0$, then $\Ric$ is nilpotent and $\nh$ determines the lightlike parallel line field. Moreover, if $\Ric$ vanishes or is 2-step nilpotent, the underlying manifold is a $pp$-wave. 
		\item If $g(\nh,\nh)\neq0$, then $\nh$ is spacelike and the distinguished lightlike vector field is orthogonal to $\nh$.
	\end{enumerate}
\end{enumerate}
\end{theorem}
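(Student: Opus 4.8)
The plan is first to distill the field equation into two structural facts and then to argue according to the Jordan (Segre) type of $\Ric$. Tracing \eqref{eq:vacuum-Einstein-field-equations} in dimension $4$ gives $\Delta h=-\tfrac{\tau}{3}h$, and by Lemma~\ref{le:const-sc} the scalar curvature $\tau$ is constant; hence the equation is equivalent to $\Hes_h=h\,\rho-\tfrac{\tau}{3}h\,g$, so the Hessian operator of $h$ is an affine function of $\Ric$ with positive leading coefficient $h$, and therefore $\Hes_h$ and $\Ric$ share eigendistributions and Jordan type. Moreover, $\tau$ being constant, $\diver W=0$ is equivalent to the harmonicity $\diver R=0$ of the full curvature and to the Codazzi property $(\nabla_X\rho)(Y,Z)=(\nabla_Y\rho)(X,Z)$ of $\rho$. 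Combining this Codazzi identity with $\Hes_h=h\rho-\tfrac{\tau}{3}h\,g$ makes the curl of $\Hes_h$ proportional to $dh\wedge\Hes_h$; on the other hand the Ricci identity for $\nabla^3 h$ identifies that curl with a curvature term $R(\cdot,\cdot)\nabla h$. Equating, taking one argument equal to $\nabla h$, and using that $Z\mapsto R(\nabla h,Z)\nabla h$ is self-adjoint, one gets $dh\wedge d\beta=0$ with $\beta:=g(\nabla h,\nabla h)$; hence $\nabla_{\nabla h}\nabla h=\tfrac12\nabla\beta$ is proportional to $\nabla h$, so $\nabla h$ is an eigenvector of $\Ric$ with a (necessarily real) eigenvalue $\mu$. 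This observation is the cornerstone of everything that follows.

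Next I would rule out a conjugate pair of complex eigenvalues of $\Ric$. Such a pair would make $\Ric$ act on a timelike invariant $2$-plane $\Pi$ with no real eigenvector and on its spacelike orthogonal complement $\Pi^\perp$ with two real eigenvalues; by the previous paragraph $\nabla h\in\Pi^\perp$. Evaluating the curl identity for $\Hes_h$ on pairs of vectors tangent to $\Pi$, where $dh$ vanishes, yields $R(X,Y)\nabla h=0$ for all $X,Y\in\Pi$. Feeding this, the block form $\Hes_h=h(\Ric-\tfrac{\tau}{3}\Id)$, and the eigenvector property of $\nabla h$ into the Codazzi equations for $\rho$ written in a frame adapted to $\Pi\oplus\Pi^\perp$ should force the rotational part of $\Ric|_\Pi$ to vanish, a contradiction. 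I expect this to be the principal obstacle: the structure theory for Codazzi tensors is classically set in Riemannian signature, and here it has to be run in Lorentzian signature precisely in the case where an invariant plane is not an eigenspace, so the relevant integrability and eigenvalue identities must be re-derived by hand. A parallel frame computation, made instead when $\Ric$ carries a nontrivial real Jordan block and using the same Codazzi equations, shows that the distinguished lightlike direction is geodesic with vanishing expansion, shear and twist, i.e. that $(M,g)$ is a Kundt spacetime; this disposes of the ``non-diagonalizable $\Rightarrow$ Kundt'' statement shared by all solutions.

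With real eigenvalues in hand I would split into the non-diagonalizable and diagonalizable cases. If $\Ric$ is non-diagonalizable, $(M,g)$ is Kundt by the previous step, and the causal character of $\nabla h$ gives item~(2). If $\beta=0$, I invoke and sharpen the isotropic analysis of \cite{Brozos-Mojon}: $\Ric$ is nilpotent; $\nabla_{\nabla h}\nabla h=\tfrac12\nabla\beta=0$, so $\nabla h$ spans the lightlike parallel line field; and when $\Ric$ vanishes or is $2$-step nilpotent the extra rigidity from $\diver W=0$ promotes the Brinkmann wave of \cite{Brozos-Mojon} to a $pp$-wave. If $\beta\neq0$, then $\nabla h$ cannot be timelike, for otherwise its orthogonal complement would be a spacelike $\Ric$-invariant hyperplane on which $\Ric$ is diagonalizable, forcing $\Ric$ diagonalizable overall; hence $\nabla h$ is spacelike, and because generalized eigenspaces for distinct eigenvalues are $g$-orthogonal while within a single degenerate generalized eigenspace a spacelike eigenvector is $g$-orthogonal to the lightlike vector $\ell$ of the block, we obtain $\nabla h\perp\ell$.

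Finally, the diagonalizable case. One first checks $\beta\neq0$: an isotropic $\nabla h$ would force $\Ric$ nilpotent by \cite{Brozos-Mojon}, hence $\Ric=0$ and $\tau=0$, whence $\Hes_h=0$ and $\nabla h$ is a parallel lightlike gradient, the Kundt situation already recorded in item~(2)(a). So assume $\beta\neq0$; then the level sets of $h$ are non-degenerate hypersurfaces whose shape operator is, up to the positive factor $h/\sqrt{|\beta|}$, a sign, and the constant shift $-\tfrac{\tau}{3}\Id$, the tangential part of $\Ric$, and the integral curves of $\nabla h$ are geodesics since $\Hes_h(\nabla h,e_i)=\mu\,g(\nabla h,e_i)=0$ on eigendirections orthogonal to $\nabla h$. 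A case analysis on the eigenvalue multiplicities of $\Ric$ — using the Gauss--Codazzi equations of these level sets, the fact that an eigendistribution of a Codazzi tensor with locally constant multiplicity at least $2$ is integrable with totally geodesic leaves along which the eigenvalue is constant, and the exclusion of the locally conformally flat case via Theorem~\ref{th:loc-conf-flat-ndim} — then forces, on the open dense set $M_{\Ric}$ where these multiplicities are locally constant and the relevant warping functions are regular, either the splitting $I_2\times(I_1\times_\xi N)$ of item~(1)(a), in which a $2$-dimensional eigendistribution integrates to the surface $N$, the flat line factor $I_2$ carries the eigenvalue $0$ of $\Ric$, and $\tilde\tau=0$ is forced by comparing $\Delta h=-\tfrac{\tau}{3}h$ with the analogous $3$-dimensional trace identity; or the product $N_1\times N_2$ of item~(1)(b), the pattern with two double eigenvalues, in which $\Hes_h$ vanishes on the factor where the Ricci eigenvalue equals $\tfrac{\tau}{3}$ and this pins the Gauss curvatures to the ratio $1:2$. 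The secondary difficulty is to verify that no other multiplicity pattern survives and to recover the warping function explicitly, which is where dimension $4$ and the surface identity $\Ric=K\,g$ enter decisively.
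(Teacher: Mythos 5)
Your overall architecture is the right one and matches the paper's: establish that $\nh$ is a (real) eigenvector of $\Ric$ via the curl of $\Hes_h$ and the field equation (this is Lemma~\ref{lemma:nh-Ricci-eigen}, obtained from \eqref{eq:Rnf}), then split by Jordan type, obtaining Kundt structure from the Codazzi relations in the non-diagonalizable cases and warped/product splittings in the diagonalizable one. However, the two steps that carry essentially all of the difficulty are asserted rather than proved, and for the first of them the mechanism you propose is demonstrably too weak. You claim that writing the Codazzi equations in a frame adapted to $\Pi\oplus\Pi^\perp$, together with $R(X,Y)\nh=0$ for $X,Y\in\Pi$, ``should force the rotational part of $\Ric|_\Pi$ to vanish.'' It does not: in the paper (Lemma~\ref{le:complex-derivatives}) the Codazzi relations merely show that $a,b,\alpha,\lambda$ are constant transversally and satisfy a first-order ODE system along $\nh$ that is perfectly consistent with $b\neq0$; Example~\ref{ex:complex-eigenvalues} exhibits an actual Type~I.b solution, so any exclusion argument must use harmonicity quantitatively and at higher order. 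The paper's proof of Theorem~\ref{th:non-existence-complex} needs the full curvature components of the adapted connection (Lemma~\ref{le:eqs-gamma-complex}) matched against \eqref{eq:Rnf}, followed by two further differentiations along $\nh$ to generate six polynomial relations in $(J,a,b,\alpha,H)$ and a Gr\"obner-basis elimination yielding $16b^8+8b^6\alpha^2+b^4\alpha^4=0$, hence $b=0$. Nothing in your sketch produces a contradiction of this kind, and there is no reason to expect a purely first-order Codazzi argument to do so.

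The second gap is in the diagonalizable case: you acknowledge but do not carry out the exclusion of the multiplicity pattern with three distinct eigenvalues $\lambda_2,\lambda_3,\lambda_4$. This is not a routine verification; in the paper it occupies the Appendix (Lemma~\ref{lemma:all-eigen-different}) and hinges on comparing three independent expressions for the Ricci eigenvalues to solve for $\Gamma_{342}\Gamma_{432}$ and then deducing $(\Gamma_{432})^2=-\tfrac{(a-c)^2(b-c)^2}{4P}\leq 0$, which is impossible unless two of the $\beta_i$ coincide. Without this step your case analysis in item~(1) is incomplete. Two smaller points: geodesicity of $\nh$ ($\nabla_{\nh}\nh=\tfrac12\nabla\beta=0$) does not by itself make the line field $\mathbb{R}\nh$ parallel — that requires $\nabla_X\nh\propto\nh$ for all $X$, which in the $3$-step nilpotent case must be argued separately; and in item~(1)(b) the ratio $1\!:\!2$ of the Gauss curvatures is not ``pinned'' by the vanishing of $\Hes_h$ on one factor but is extracted from the curvature identity \eqref{eq:Rnf} applied to mixed directions, as in the proof of Theorem~\ref{th:classification-diagonalizable}.
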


A remarkable observation from this classification is that the underlying spacetimes for the solutions that are obtained are typical examples that also arise in the study of cosmological models in General Relativity without the presence of a density.
The descriptions of the density functions corresponding to each of the manifolds given in Theorem~\ref{th:classification-harmonic} are  discussed in Sections~\ref{sec:4-dim-diagonal} to \ref{sec:non-diagonalizable-real}.

\subsection {Outline of the paper}
The remaining of the paper is organized as follows. Some preliminaries are given in Section~\ref{sect2}, where we review general aspects of manifolds that will appear in subsequent sections (including Kundt spacetimes and warped structures). In Section~\ref{sect:locally-conf-flat} we consider solutions with vanishing augmented Cotton tensor, which leads to the proof of Theorem~\ref{th:loc-conf-flat-ndim}. 
In order to prove Theorem~\ref{th:classification-harmonic}, each admissible form of $\Ric$ is tackled in the corresponding section, in both the isotropic and non-isotropic cases. Further details on the geometry and the form of the density function for solutions with different Jordan forms are also provided in each of them. In particular, in Section~\ref{sec:4-dim-diagonal} we consider the diagonalizable case, where we follow some ideas already used in the Riemannian case (the more technical part is left to an Appendix). In Section~\ref{sec:non-diagonalizable-complex}, we prove that the Ricci eigenvalues of solutions with harmonic curvature are necessarily real. This is a long proof that requires a detailed analysis of the geometry of solutions to end up with the use of an algebraic tool (Gr\"obner bases) on a set of polynomials to show that solutions with non-real eigenvalues do not exist. Finally, in Section~\ref{sec:non-diagonalizable-real} we study non-diagonalizable solutions with minimal polynomial of degrees two and three. In Section~\ref{sec:conclusions}, some conclusions summarize the given results.

\section{Distinguished Lorentzian metric structures}\label{sect2}

Several well-known families of spacetimes appear as underlying manifolds in the analysis of solutions to equation~\eqref{eq:vacuum-Einstein-field-equations}. In this section we recall them and fix notation for the subsequent study.

\subsection{Warped and multiply warped  product metrics}\label{subsec:warped-metrics}

A simple geometric structure is that of a direct product manifold,
this is, a manifold $\mathcal{M}=(M,g)$ which decomposes as
$\mathcal{M}=(B\times F,g_B\oplus g_F)$. If we  modify
the metric of the second factor by a positive warping function defined on the base $B$, we have a {\it warped product} $B\times_f F$ with metric $g_B\oplus f^2 g_F$, where $F$ is the fiber. We will make use of the following expression for the Levi-Civita connection for this structure, for $X,V$ lifts of vectors in $B$ and $F$ respectively:
\begin{equation}\label{eq:connection-warped-product}
	\nabla_XV=\nabla_VX=\frac{1}f X(f)V.
\end{equation}
Additionally, the a priori non-vanishing components of the Ricci tensor for such a structure are given by
\begin{equation}\label{eq:ricci-warped-product}
\begin{array}{rcl}
\rho(X,Y)&=&\left(\rho^B-\frac{d}{f} \Hes_f\right)(X,Y), \text{ where  $X, Y$ are tangent to $B$,}\\
\noalign{\medskip}
\rho(U,V)&=&\left(\rho^F-\left(\frac{f \Delta^B
	f+(d-1)\|\nabla f\|^2}{f^2}\right)g\right)(U,V), \text{ where   $U,V$ are tangent to $F$},
\end{array}
\end{equation}
where $d=\dim F$ (see \cite{Oneill}). 
A useful remark from \cite{Ponge-Twisted} is that a metric $g$ defined on $B\times F$, with foliations $\mathfrak{L}_B$ and $\mathfrak{L}_F$ intersecting orthogonally,
 is a warped product if and only if $\mathfrak{L}_B$ is
totally geodesic and $\mathfrak{L}_F$ is spherical.

If we add several fibers $(F_1,g_1)$, $\dots$, $(F_k,g_k)$, with their corresponding warping functions $f_1$, \dots, $f_k$ defined on $B$, we get a multiply warped product $B\times_{f_1} F_1
\times \dots \times_{f_k} F_k$ with the metric given by $g_B\oplus f_1^2 g_1 \oplus \dots \oplus f_k^2 g_k$.  The expressions for the components of the Levi-Civita connection and the Ricci tensor for this structure can be seen, for example, in \cite{Dobarro-Unal}.

\subsection{Kundt spacetimes, Brinkmann waves and other  subfamilies}\label{subsec:Kundt-pr}

Kundt spacetimes are a broad family of Lorentzian manifolds that appear in a number of different physical and geometric situations. They are characterized by the presence of a distinguished lightlike vector field and stronger features of this vector field result in more specific subfamilies of spacetimes such as Brinkmann waves or $pp$-waves. Literature devoted to these spacetimes is vast and we refer to \cite{Boucetta,chow2010kundt,Kundt-spacetimes,podolsky2009general,stephani-et-al} to cite just a few.

Kundt spacetimes and their subfamilies play an essential role in the study of isotropic solutions of the weighted Einstein field equations . Isotropic solutions have nilpotent Ricci operator, $\Ric$. Moreover, they are Kundt spacetimes if $\Ric$ is $3$-step nilpotent and Brinkmann waves if $\Ric$ is $2$-step nilpotent (see \cite{Brozos-Mojon} for details). 
We will see in the subsequent sections that Kundt spacetimes not only play a role in the isotropic case, but also in the non-isotropic case, under some geometric conditions.

{\it Kundt spacetimes} are Lorentzian manifolds of dimension $n\geq 3$ with a lightlike geodesic vector field $V$ which is recurrent in its orthogonal complement (see, for example, \cite{Boucetta}), i.e. there exists a differential 1-form $\omega$ such that
\begin{equation}\label{eq:condition-K1}
	\nabla_VV=0 \quad \text{and} \quad \nabla_XV=\omega(X)V \,\, \text{ for all }  X\in V^\perp.
\end{equation} 
Alternatively (see
\cite{chow2010kundt,Kundt-spacetimes,podolsky2009general})), the distinguished vector fields $V$ that define Kundt spacetimes are characterized by being lightlike geodesic  and having zero {\it optical scalars}: {\it expansion} ($\theta=\frac{1}{n-2}\nabla_iV^{i}$), {\it shear} ($\sigma^2=(\nabla^i V^j) \nabla_{(i} V_{j)}-(n-2)\theta^2$) and {\it twist} ($\omega^2=(\nabla^iV^j)\nabla_{[i}V_{j]}$,
where parentheses and brackets in the subindices denote symmetrization and anti-symmetrization, respectively). 
For an $n$-dimensional Kundt spacetime, in appropriate local coordinates $(u,v,x_1,\dots, x_{n-2})$, the metric can be written as 
\begin{equation}\label{eq:local-coord-kundt-xeral}
	g=dv\left(2du+F(u,v,x)dv+\sum_{i=1}^{n-2}2 H_{x_i}(u,v,x)dx_i\right)+\sum_{i,j=1}^{n-2}g_{ij}(v,x)dx_idx_j,
\end{equation}
where $F$, $H_{x_i}$ and $g_{ij}$ are functions of the specified coordinates.  
In dimension four, coordinates in \eqref{eq:local-coord-kundt-xeral} can be further specialized so that $g_{ij}=P(v,x)\delta_{ij}$  (see \cite{stephani-et-al}).

Some well-known families of Kundt spacetimes are characterized by more restrictive conditions on the distinguished lightlike vector field $V$. If $V$ is recurrent, i.e. if $\nabla_XV=\omega(X)V$  for a $1$-form $\omega$ on $M$ and every $X\in  \mathfrak{X}(M)$, then the manifold is said to be a {\it Brinkmann wave}. In this case, coordinates \eqref{eq:local-coord-kundt-xeral} can be
specialized with $H_{x_i}$ not depending on $u$ (see, for example \cite{leistner}). Moreover, $\partial_u F=0$ if and only if $V$ can be rescaled to a parallel vector field, in which case the metric can be further simplified.

Among Brinkmann waves, there are special families of interest that are obtained by imposing some conditions on the curvature. Following terminology in \cite{leistner}, {\it pure radiation waves} ($pr$-waves for short), are Brinkmann waves whose curvature tensor satisfies $R(V^\perp,V^\perp,-,-)=0$. In this case, the Kundt coordinates \eqref{eq:local-coord-kundt-xeral} reduce to a much simpler form:
\begin{equation}\label{eq:local-coord-pr-xeral}
	g=2dudv+F(u,v,x)dv^2+\sum_{i,j=1}^{n-2}dx_i^2.
\end{equation}
Whenever the distinguished vector field $V$ is parallel, the $pr$-wave is said to be a $pp$-wave, and $F$ can be taken to satisfy $\partial_uF=0$. Note that a Brinkmann wave with parallel lightlike vector field $V$ is a $pp$-wave if and only if $R(V^\perp,V^\perp,\cdot,\cdot)=0$. Moreover, it was shown in \cite{leistner} that a $pr$-wave is a $pp$-wave if and only if it is Ricci isotropic, i.e. $\Ric(X)=0$ for any $X\in V^\perp$.  These spacetimes are very common in special situations described in General Relativity and, in particular, as solutions of the Einstein equations (we refer to \cite{stephani-et-al} for further details). A $pp$-wave with transversally parallel curvature tensor (i.e. such that $\nabla_{V^\perp} R=0$) is called a {\it plane wave}. Again, we refer to \cite{stephani-et-al} for examples of contexts where these spacetimes play a role, which are numerous. In local coordinates, the metric of a plane wave can be given by \eqref{eq:local-coord-pr-xeral} where $F(u,v,x)=\sum_{i,j=1}^na_{ij}(v)x_ix_j$ and the coefficients $a_{ij}$ are smooth functions of $v$. Note that, if the $a_{ij}$ are constants, these metrics correspond to {\it Cahen-Wallach symmetric spaces} \cite{cahen-wallach}.

\section{Solutions to the vacuum weighted Einstein field equations }\label{sect:locally-conf-flat}

From a geometric point of view, it is natural to question how the geometry of a solution of \eqref{eq:vacuum-Einstein-field-equations} is constrained by the relationship between the density and the metric, and to what end the features of both $g$ and $h$ can be inferred from the vacuum weighted field equations. 
The first geometric consequence of these equations, coming from the fact that $G^h$ is divergence-free if they are satisfied, is that the scalar curvature of a solution is necessarily constant (cf. \cite{Bourguignon,Fischer-Marsden}).

\begin{lemma}\label{le:const-sc} {\rm \cite{Brozos-Mojon}}
	If $(M,g,h)$ is a solution of the vacuum weighted Einstein field equations, then its scalar curvature $\tau$ is constant.
\end{lemma}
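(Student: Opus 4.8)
The plan is to exploit the fact, established in Section~\ref{sect:variational-approach}, that $G^h = h\rho - \Hes_h + \Delta h\, g$ is divergence-free whenever the defining properties hold — or rather, to compute $\diver G^h$ directly from the equation $G^h = 0$ and extract a constraint on $\tau$. The key identity I would use is the contracted second Bianchi identity, $\diver \rho = \tfrac{1}{2}\,d\tau$, together with the commutation formula relating $\diver \Hes_h$ to $\Delta(\diver(\cdot))$ and a Ricci curvature term, namely the Bochner-type identity $\diver(\Hes_h) = d(\Delta h) + \rho(\nh,\cdot)$ (this is the standard Ricci identity for third covariant derivatives of a function).

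The steps are as follows. First, apply $\diver$ to the equation $h\rho - \Hes_h + \Delta h\, g = 0$. Using $\diver(h\rho) = h\,\diver\rho + \rho(\nh,\cdot) = \tfrac{h}{2}d\tau + \rho(\nh,\cdot)$, the identity $\diver(\Hes_h) = d(\Delta h) + \rho(\nh,\cdot)$, and $\diver(\Delta h\, g) = d(\Delta h)$, the two Ricci-curvature terms $\rho(\nh,\cdot)$ cancel and the two $d(\Delta h)$ terms cancel, leaving simply
\[
\frac{h}{2}\, d\tau = 0.
\]
Since $h$ is a positive function, this forces $d\tau = 0$, i.e. $\tau$ is (locally, hence on each connected component) constant. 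That is the whole argument.

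I expect essentially no obstacle here; the only point requiring a little care is bookkeeping the signs in the divergence identities and making sure the Hessian/Laplacian conventions in the paper (where $\Delta h = \tr \Hes_h$) are the ones under which the Bianchi-type cancellations go through — but these are routine. An alternative, entirely equivalent route would be to invoke the abstract property recalled in the introduction: $G^h$ is divergence-free when the scalar curvature is constant is stated there, but in fact the sharper statement one needs is that $\diver G^h = -\tfrac{h}{2}d\tau$ identically (this is what the computation above shows), so that $G^h = 0$ immediately yields $d\tau = 0$. Either phrasing gives the result, and since the lemma is quoted from \cite{Brozos-Mojon} I would simply present the short divergence computation for completeness and cite that reference, as the excerpt already does.
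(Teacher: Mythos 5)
Your divergence computation is correct and is exactly the argument the paper has in mind: it states the lemma by citation to \cite{Brozos-Mojon} and attributes it to the fact that $\diver G^h$ reduces to $\tfrac{h}{2}d\tau$ once the Bianchi identity, the commutation formula $\diver\Hes_h=d(\Delta h)+\rho(\nh,\cdot)$, and $\diver(\Delta h\,g)=d(\Delta h)$ are applied. The only blemish is the stray sign in your closing remark ($\diver G^h=-\tfrac{h}{2}d\tau$ versus the $+\tfrac{h}{2}d\tau$ your own computation yields), which does not affect the conclusion since $h>0$.
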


We introduce some notation as follows. The $(0,4)$-curvature tensor is given by $R(X,Y,Z,U)=g((\nabla_{[X,Y]}-[\nabla_X,\nabla_Y])Z,U)$. For an $n$-dimensional spacetime the {\it Schouten tensor} is given by $P=\frac{1}{n-2}\left(\rho-\frac{\tau}{2(n-1)}g\right)$, so the Weyl tensor $W$ is obtained from the relation $R=P\KN g+W$, where $\KN$ is the {\it Kulkarni-Nomizu product}.

We use 
 $\operatorname{div}$ to denote the usual divergence, given in an orthonormal frame $\{E_1,\dots,E_n\}$, with $\varepsilon_i=g(E_i,E_i)$, by $\operatorname{div} T(\cdots)=\sum_{i=1}^n\varepsilon_i(\nabla_{E_i}T)(E_i,\cdots)$. Thus, the divergence of the Weyl tensor is $\operatorname{div} W=\frac{n-3}{n-2} dP$, where $dP$ is the {\it Cotton tensor}, given by 
\begin{equation}\label{eq:cotton}
 dP(X,Y,Z)=(n-2)\left\{(\nabla_YP)(X,Z)-(\nabla_ZP)(X,Y)\right\}.
\end{equation} 
Notice that a manifold with $n\geq 4$ is locally conformally flat if and only if $W=0$, so $dP=0$ if this condition holds. In dimension three, besides, $dP=0$ characterizes local conformal flatness.
 

Moreover, the divergence of the Riemann curvature tensor is $\operatorname{div} R(X,Y,Z)=(\nabla_Y\rho)(X,Z)-(\nabla_Z\rho)(X,Y)$. Hence, since the scalar curvature of any vacuum solution is constant by Lemma~\ref{le:const-sc}, the Cotton tensor satisfies $dP=\operatorname{div} R$. Thus, considering solutions with harmonic Weyl tensor and harmonic curvature tensor becomes equivalent.

Let $J=\frac{\tau}{2(n-1)}$ be the usual Schouten scalar. Taking traces in \eqref{eq:vacuum-Einstein-field-equations}, we have $\Delta h=-\frac{h\tau}{n-1}=-2J h$, so equation \eqref{eq:vacuum-Einstein-field-equations} can also be written as
\begin{equation}\label{eq:vacuum-Einstein-field-equations-2}
	h\left(\rho-2J g\right)=\Hes_h,
\end{equation}
and $d\Delta h=-2J dh$ since $\tau$ is constant by Lemma~\ref{le:const-sc}. On the other hand, we have
\[
(\nabla_Z\Hes_h)(X,Y)-(\nabla_Y\Hes_h)(X,Z)=R(\nh,X,Y,Z),
\]
so, using \eqref{eq:vacuum-Einstein-field-equations-2}, we can write
\begin{equation}\label{eq:Rnf}
	\begin{array}{rcl}
		R(\nh,X,Y,Z)&=&\nabla_Z(h(\rho-2J g))(X,Y)-\nabla_Y(h(\rho-2J g))(X,Z)\\
		\noalign{\smallskip}
		&=& \left(\rho-2Jg\right)\wedge dh(X,Y,Z)-h\operatorname{div} R(X,Y,Z)\\
		\noalign{\smallskip}
		&=&(\left(\rho-2Jg\right)\wedge dh-hdP)(X,Y,Z),
	\end{array}
\end{equation}
where, for a $(0,2)$-tensor $T$ and a 1-form $\omega$, $T\wedge \omega(X,Y,Z)=T(X,Y)\omega(Z)-T(X,Z)\omega(Y)$.

Since the spacetime is endowed with a density, following terminology in \cite{Qing-Yuan}, we define the {\it augmented Cotton tensor}
\begin{equation}\label{eq:tensorD1}
	D=hdP+\iota_{\nh}W,
\end{equation}
where $\iota_{\nh}W(X,Y,Z)=W(\nh,X,Y,Z)$. The tensor $D$ is related to the Bach tensor in the direction of $\nh$ and restrictions on it have consequences on the geometry of solutions in Riemannian signature (see \cite{Qing-Yuan} for details). Moreover, the weighted Einstein field equations  gives $D$ a useful alternative characterization.

\begin{lemma}\label{le:nh-eigenvector}
	For any solution of the  vacuum weighted Einstein equations, the augmented Cotton tensor $D$ satisfies
	\begin{equation}\label{eq:tensorD2}
		(n-2)D=(n-1) \rho\wedge dh+ g\wedge \iota_{\nh}\rho-\tau  g\wedge dh
	\end{equation}
		for all $X,Y,Z\in \mathfrak{X}(M)$.
	\end{lemma}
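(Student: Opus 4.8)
The plan is to expand the definition $D=hdP+\iota_{\nh}W$ using the Ricci decomposition $R=P\KN g+W$ together with identity~\eqref{eq:Rnf}, which already incorporates the field equations, and then to rewrite the result in terms of $\rho$, $g$, $dh$ and $\iota_{\nh}\rho$. Since~\eqref{eq:Rnf} is nothing but $R$ contracted with $\nh$ in the first slot, from $R=P\KN g+W$ one obtains $\iota_{\nh}W=R(\nh,\cdot,\cdot,\cdot)-\iota_{\nh}(P\KN g)$, and hence
\[
\iota_{\nh}W=(\rho-2Jg)\wedge dh-hdP-\iota_{\nh}(P\KN g).
\]
Adding $hdP$ cancels the Cotton term and leaves the clean intermediate identity
\[
D=(\rho-2Jg)\wedge dh-\iota_{\nh}(P\KN g).
\]

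Next I would carry out the purely algebraic computation of $\iota_{\nh}(P\KN g)$. Writing out the Kulkarni--Nomizu product of the symmetric $(0,2)$-tensors $P$ and $g$, placing $\nh$ in the first argument, and using $g(\nh,\cdot)=dh$ together with $\iota_{\nh}P=P(\nh,\cdot)$, one matches the result against the conventions $T\wedge\omega(X,Y,Z)=T(X,Y)\omega(Z)-T(X,Z)\omega(Y)$ to get
\[
\iota_{\nh}(P\KN g)=-\,P\wedge dh-g\wedge\iota_{\nh}P.
\]
Then I would substitute the Schouten tensor $P=\frac{1}{n-2}(\rho-Jg)$ with $J=\frac{\tau}{2(n-1)}$, so that $\iota_{\nh}P=\frac{1}{n-2}(\iota_{\nh}\rho-J\,dh)$ and $\iota_{\nh}(Jg)=J\,dh$; collecting the $\rho\wedge dh$, $g\wedge\iota_{\nh}\rho$ and $g\wedge dh$ terms, multiplying through by $n-2$, and using $2(n-1)J=\tau$ yields precisely~\eqref{eq:tensorD2}.

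I do not expect a genuine obstacle here beyond careful bookkeeping: the one thing to watch is the sign and index convention in the Kulkarni--Nomizu product and in the wedge notation $T\wedge\omega$, since a misplaced sign there propagates through the entire identity. As a safeguard I would verify the formula for $\iota_{\nh}(P\KN g)$ against a degenerate case (for instance $P$ a scalar multiple of $g$) and check the consistency of~\eqref{eq:tensorD2} by tracing over $X$ and $Z$, which should reproduce the trace relations already established from~\eqref{eq:vacuum-Einstein-field-equations-2}.
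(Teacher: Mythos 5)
Your proposal is correct and follows essentially the same route as the paper: both isolate $\iota_{\nh}(P\KN g)$ from \eqref{eq:Rnf} via $R=P\KN g+W$ and the definition of $D$, then compute $\iota_{\nh}(P\KN g)=-(P\wedge dh+g\wedge\iota_{\nh}P)$ and substitute the Schouten tensor. The intermediate identity $D=(\rho-2Jg)\wedge dh-\iota_{\nh}(P\KN g)$ and the final bookkeeping match the paper's proof exactly.
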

	\begin{proof}
		Substituting the definition of $D$ in \eqref{eq:Rnf}, and using the curvature decomposition $R=P\KN g+W$, we get
		\[
		\iota_{\nh}(P\KN g)=\rho\wedge dh-\frac{\tau}{n-1} g\wedge dh-D.
		\]
		Moreover, by the definition of $P$ and the Kulkarni-Nomizu product,
		\[
		\begin{array}{rcl}
			\iota_{\nh}(P\KN g)&=& -\left(P\wedge dh+g\wedge \iota_{\nh}P\right) \\
			\noalign{\medskip}
			&=&-\frac{1}{n-2}\left(g\wedge\iota_{\nh}\rho+\rho\wedge dh-\frac{\tau}{n-1}g\wedge dh\right).
		\end{array}
		\]
		Equating both expressions for $\iota_{\nh}(P\KN g)$, the result follows.
	\end{proof}

\subsection{Solutions with vanishing augmented Cotton tensor}
As a first step in understanding solutions to the vacuum weighted Einstein field equations, we consider in this subsection those with vanishing augmented Cotton tensor.


We distinguish between isotropic and non-isotropic solutions. Recall that we are assuming that the character of $\nh$ does not change in $M$. As concerns isotropic solutions, they were described in general in \cite{Brozos-Mojon}, showing that the Ricci operator is nilpotent and that solutions are realized in one of the two following families:
\begin{enumerate}
	\item Brinkmann waves if the Ricci operator vanishes or is $2$-step nilpotent.
	\item Kundt spacetimes if the Ricci operator is $3$-step nilpotent.
\end{enumerate}
The following result shows that, if the augmented Cotton tensor vanishes, the underlying manifold is a warped product or a Brinkmann wave, depending on whether it is non-isotropic or isotropic, respectively.
	
\begin{theorem}\label{th:loc-conf-flat-nisolutions}
Let $(M,g,h)$ be a solution of the weighted Einstein field equations \eqref{eq:vacuum-Einstein-field-equations} with vanishing $D$ tensor. 
\begin{enumerate}
	\item If $g(\nh,\nh)\neq 0$, then $(M,g)$ is locally isometric to a warped product $I\times_\varphi N$, where $I\subset \mathbb{R}$ is an open interval, $N$ is an $(n-1)$-dimensional Einstein manifold, and $\nh$ is tangent to $I$. 
	\item If $g(\nh,\nh)=0$, then $(M,g)$ is a Brinkmann wave with $2$-step nilpotent Ricci operator.
\end{enumerate} 
\end{theorem}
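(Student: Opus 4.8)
The plan is to exploit the characterization of the augmented Cotton tensor given in Lemma~\ref{le:nh-eigenvector}: setting $D=0$ turns \eqref{eq:tensorD2} into the algebraic-differential identity $(n-1)\rho\wedge dh + g\wedge\iota_{\nh}\rho - \tau\, g\wedge dh = 0$. The first move is to extract from this the structure of the Ricci operator relative to $\nh$. Contracting this identity and pairing it against suitable vectors, I expect to deduce that $\nh$ is an eigenvector of $\Ric$, and that the restriction of $\Ric$ to $\nh^\perp$ is a multiple of the identity; in other words the Ricci operator has at most two distinct eigenvalues, one in the direction of $\nh$ and one on the complement, with an explicit relation to $\tau$. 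This is the ``Codazzi-type'' rigidity that vanishing $D$ forces on a weighted Einstein solution.

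Next I would split according to the causal character of $\nh$. In the non-isotropic case $g(\nh,\nh)\neq 0$, normalize $V=\nh/\sqrt{|g(\nh,\nh)|}$ (locally $h$ has no critical points by hypothesis). The equation \eqref{eq:vacuum-Einstein-field-equations-2}, namely $h(\rho-2Jg)=\Hes_h$, together with the Ricci structure just obtained, should show that $V$ is a geodesic unit vector field whose orthogonal distribution $V^\perp$ is umbilical (spherical): the Hessian of $h$ restricted to $V^\perp$ is pointwise proportional to $g|_{V^\perp}$, and $\nabla_V V$ is controlled by $dh$. Invoking the Ponge--Reckziegel criterion recalled in Section~\ref{subsec:warped-metrics} (a metric with orthogonal foliations $\mathfrak{L}_B$ totally geodesic and $\mathfrak{L}_F$ spherical is a warped product), one concludes $(M,g)$ is locally $I\times_\varphi N$ with $\nh$ tangent to $I$. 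Feeding this warped structure back into \eqref{eq:ricci-warped-product} and using that $\Ric|_{\nh^\perp}$ is pure trace, the fiber $N$ inherits a pointwise-Einstein condition, and then $N$ is Einstein (for $\dim N\geq 3$ Schur's lemma applies; for $\dim N=2$ pointwise Einstein is automatic). That settles (1).

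In the isotropic case $g(\nh,\nh)=0$, I would instead feed $D=0$ into \eqref{eq:Rnf}. Since the known results for isotropic solutions (quoted just before the statement, from \cite{Brozos-Mojon}) already give that $\Ric$ is nilpotent and that solutions are either Brinkmann waves ($\Ric$ vanishing or $2$-step nilpotent) or Kundt with $3$-step nilpotent $\Ric$, it suffices to rule out the genuinely $3$-step nilpotent Kundt case under $D=0$. For that, one uses \eqref{eq:Rnf} with $D=0$: it reduces to $R(\nh,X,Y,Z) = (\rho-2Jg)\wedge dh\,(X,Y,Z)$, and since $\tau=0$ on isotropic solutions (hence $J=0$) this is $R(\nh,X,Y,Z)=\rho\wedge dh\,(X,Y,Z)$. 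Contracting against the Kundt frame and using that $\rho$ for a $3$-step nilpotent operator has a nonzero component transverse to $\nh^\perp$, one gets a contradiction with the Kundt recurrence relations \eqref{eq:condition-K1}; equivalently, the curvature term $R(V^\perp,V^\perp,\cdot,\cdot)$ is forced to vanish, which by Section~\ref{subsec:Kundt-pr} upgrades the Kundt spacetime to a Brinkmann wave, and the nilpotency degree drops to $2$. That gives (2).

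\textbf{Main obstacle.} The delicate point is the first step: squeezing out of the single tensor equation $(n-1)\rho\wedge dh + g\wedge\iota_{\nh}\rho - \tau\, g\wedge dh = 0$ the full statement that $\nh$ is a Ricci eigenvector \emph{and} that $\Ric$ is a multiple of the identity on $\nh^\perp$, with the correct eigenvalue bookkeeping, requires care with the Kulkarni-Nomizu-type wedge $T\wedge\omega$ and especially with the degenerate contractions in the isotropic case, where $g(\nh,\nh)=0$ makes many naive traces vanish identically. I would handle the isotropic subcase by working in a pseudo-orthonormal (null) frame adapted to $\nh$ rather than an orthonormal one, so that the components of $\rho\wedge dh$ can be read off directly and matched against the Kundt/Brinkmann structure equations.
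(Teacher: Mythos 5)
Your treatment of part (1) is essentially the paper's proof: from the identity $(n-1)\rho\wedge dh+g\wedge\iota_{\nh}\rho-\tau\,g\wedge dh=0$ one reads off $\rho(E_1,E_i)=0$ and $\rho(E_i,E_j)\propto g(E_i,E_j)$ on $\nh^\perp$, then via \eqref{eq:vacuum-Einstein-field-equations-2} the level sets of $h$ are umbilical and spherical while $\operatorname{span}\{\nh\}$ is totally geodesic, and Ponge--Reckziegel gives the warped product; the Einstein condition on the fiber follows because the proportionality factor in $\rho^N$ depends only on the base coordinate while $\rho^N$ lives on the fiber (this also covers $\dim N=2$, where your appeal to Schur would be vacuous but is not needed for the statement as written).

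Part (2), however, contains a genuine error. You claim that $D=0$ reduces \eqref{eq:Rnf} to $R(\nh,X,Y,Z)=(\rho-2Jg)\wedge dh\,(X,Y,Z)$. That reduction would require $dP=0$, but $D=hdP+\iota_{\nh}W=0$ only gives $hdP=-\iota_{\nh}W$, so the correct consequence is $R(\nh,X,Y,Z)=(\rho-2Jg)\wedge dh\,(X,Y,Z)+W(\nh,X,Y,Z)$, with a surviving Weyl term. Your proposed contraction against the Kundt frame then involves unknown curvature and Weyl components and does not close; moreover, the condition $R(V^\perp,V^\perp,\cdot,\cdot)=0$ does not by itself upgrade a Kundt spacetime to a Brinkmann wave (it distinguishes $pr$-waves \emph{among} Brinkmann waves). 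The repair is immediate and uses only what you already set up: evaluate the purely algebraic identity $(n-1)\rho\wedge dh+g\wedge\iota_{\nh}\rho-\tau\,g\wedge dh=0$ from Lemma~\ref{le:nh-eigenvector} on the triple $(U,U,X_1)$ of the adapted null frame of \eqref{eq:matrices-iso}, where $g(\nh,U)=1$ and $\Ric(X_1)=\mu\nh$. All terms but one vanish ($dh(X_1)=g(U,U)=g(U,X_1)=0$), leaving $-(n-1)\mu=0$; hence the $3$-step nilpotent case is excluded and the Brinkmann conclusion follows from the quoted results of \cite{Brozos-Mojon}, with no curvature computation needed.
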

\begin{proof}
We analyze both cases separately. Assume first that $g(\nh,\nh)\neq 0$. Note that, for any non-isotropic solution, since $\nh$ is not lightlike, we can consider a local pseudo-orthonormal frame $\mathcal{B}=\{E_1,E_2,\dots,E_n\}$, where $E_1=\frac{\nh}{|\nh|}$ and $|\nh|=\sqrt{\varepsilon g(\nh,\nh)}$ ($\varepsilon=\pm1$ depending on whether $\nh$ is spacelike or timelike, respectively). Furthermore, without loss of generality, we can take $g(E_2,E_2)=\varepsilon_2=-\varepsilon$ and $g(E_i,E_i)=1$ for $i>2$.
 
On the one hand, in the expression \eqref{eq:tensorD2} for the augmented Cotton tensor $D$, take $Y=\frac{E_1}{|\nh|}$ so that $g(\nh,Y)=\varepsilon$, and take $X=E_i$, $Z=E_j$, $i,j>1$. Since $D$ vanishes, we have $\rho(E_i,E_j)=\frac{\tau-\varepsilon\rho(E_1,E_1)}{n-1}g(E_i,E_j)$.
	Then, by equation \eqref{eq:vacuum-Einstein-field-equations-2},
	\[
		\Hes_h(E_i,E_j)=-h\varepsilon\frac{\rho(E_1,E_1)}{n-1} g(E_i,E_j).
	\]
	On the other hand, we can take $X=Y=E_1$ and $Z=E_i$, $i>1$ to find $\rho(E_1,E_i)=\Hes_h(E_1,E_i)=0$. It follows that the level hypersurfaces of $h$ in $M$ are totally umbilical and, furthermore, that the distribution generated by $\nh$ is totally geodesic. Consequently, $(M,g)$ splits locally as a twisted product $I\times_\varphi N$, where $I\subset \mathbb{R}$ is an open interval, for some function $\varphi$ on $I\times N$. Moreover, the mean curvature vector field $\nabla h$ is parallel in the normal bundle ($\nabla^\perp \nh=0$, where $\nabla^\perp$ is the normal connection). Indeed, for $i\neq1$, $\varepsilon\nabla^\perp_{E_i}\nh=g(\nabla_{E_i}\nh,E_1)=\Hes_h(E_1,E_i)=0$. Hence the leaves of the fiber form a spherical foliation and the twisted product reduces to a warped product $I\times_\varphi N$ for some function $\varphi$ on $I$ (see \cite{Ponge-Twisted}).
	
	
Let $t$ be a local coordinate parameterizing $I$ by arc length with $E_1=\nabla t=\varepsilon \partial_t$ and let $\varepsilon\alpha=\rho(E_1,E_1)$ and $\lambda=\frac{\tau- \alpha}{n-1}$. Then, we can write $\Hes_{h}(E_1,E_1)=h''$ and, by the weighted Einstein field equations \eqref{eq:vacuum-Einstein-field-equations-2}, $\alpha=\varepsilon h^{-1}h''+2J$, so $\alpha$ depends only on $t$. Moreover, since $\tau$ is constant, $\rho(E_i,E_j)=\lambda g(E_i,E_j)$ depends only on $t$ as well. We shall show that $N$ is Einstein as follows. Consider the basis $\{\bar{E_i}=\varphi E_i\}_{i=2,\dots,n}$ which is orthonormal on $N$. From the expression of the Ricci tensor of a warped product (see \cite{Oneill}), we have
\begin{equation}\label{eq:fiber1Einstein}
\begin{array}{rcl}
 	\rho^N(\bar{E_i},\bar{E_j})&=&\rho(\bar{E_i},\bar{E_j})+\varepsilon g(\bar{E_i},\bar{E_j})\left(\frac{\varphi''}{\varphi}+(n-2)\frac{(\varphi')^2}{\varphi^2}\right) \\
	\noalign{\medskip}
	&=& \varphi^2 \left(\varepsilon\lambda+\frac{\varphi''}{\varphi}+(n-2)\frac{(\varphi')^2}{\varphi^2}\right)\varepsilon g(E_i,E_j).
\end{array}
\end{equation}
	Since $\rho^N(\bar{E_i},\bar{E_j})$ is a function defined on the fiber, it does not depend on $t$, which is a coordinate of the base. Hence,  $\rho^N=\beta g^N$  for some $\beta\in \mathbb{R}$ and $N$ is Einstein. Thus, assertion (1) holds.

Now, assume $g(\nabla h,\nabla h)= 0$. Then, it follows from \cite{Brozos-Mojon} that the Ricci operator is nilpotent. Moreover, there exists a pseudo-orthonormal basis $\{\nabla h, U, X_1, \dots, X_{n-2}\}$ such that the non-zero terms of the metric tensor are $g(\nabla h,U)=g(X_i,X_i)=1$, $i=1,\dots, n-2$, and the Ricci operator is given by $\operatorname{Ric}(U)=\nu\nabla h+\mu X_1$ and $\operatorname{Ric}(X_1)=\mu \nabla h$ (see \cite{Brozos-Mojon,Oneill}). 

Since $D=0$, equation \eqref{eq:tensorD2} evaluated on $(U,U,X_1)$ yields
\[
\begin{array}{rcl}
	0&=&(n-1) \rho\wedge dh(U,U,X_1)+ g\wedge \iota_{\nh}\rho(U,U,X_1)-\tau  g\wedge dh(U,U,X_1)\\
	\noalign{\medskip}
	&=& - (n-1) \mu .
\end{array}
\]
Hence, $\mu=0$, so the Ricci operator is two-step nilpotent and it follows from \cite{Brozos-Mojon} that $(\mathfrak{U},g|_\mathfrak{U})$ is a Brinkmann wave.
\end{proof}

Note that the warped product structure of non-isotropic solutions with vanishing $D$ tensor, described in Theorem~\ref{th:loc-conf-flat-nisolutions}~(1), is analogous to the case of Riemannian signature discussed in \cite{Qing-Yuan}. This analogy also works when considering locally conformally flat non-isotropic solutions and comparing them to those studied in Riemannian signature in \cite{Kobayashi} as we will see in the following subsection.

\subsection{Locally conformally flat solutions}
We will start this subsection keeping the dimension of the manifold arbitrary in order to prove Theorem~\ref{th:loc-conf-flat-ndim}, and then we will obtain specific results in dimension four. Unsurprisingly, the vanishing of the Weyl tensor turns out to be more restrictive than the vanishing of the augmented Cotton tensor.

\vspace{1em}

\noindent {\it Proof of Theorem~\ref{th:loc-conf-flat-ndim}}:
For locally conformally flat solutions, the augmented Cotton tensor $D$ given by \eqref{eq:tensorD1} vanishes identically since $W=0$ implies $dP=0$, so we apply Theorem~\ref{th:loc-conf-flat-nisolutions} to obtain a local splitting into a warped product $I\times_\varphi N$ if $g(\nh,\nh)\neq 0$ and adopt the notation of its proof. A warped product of the form $I\times_\varphi N$ is locally conformally flat if and only if the fiber $N$ has constant sectional curvature (see \cite{Brozos-Loc-Conf-Flat}).

Once the local splitting into a warped product has been established, we use the expressions for the Ricci tensor of a warped product (see \cite{Oneill}) and the weighted Einstein field equations \eqref{eq:vacuum-Einstein-field-equations-2} to compute the Laplacian of $h$, the two eigenvalues of the Ricci operator ($\operatorname{Ric}(\nh)=\alpha \nh$ and $\operatorname{Ric}(E_i)=\lambda E_i$ for $i>1$) and the scalar curvature in terms of $h$ and $\varphi$ to obtain:
\[
\begin{array}{rclrll}
	\Delta h&=&\varepsilon\left(h''+(n-1)\frac{h'\varphi'}{\varphi}\right), \quad &\alpha&=-\varepsilon(n-1)\frac{\varphi''}{\varphi}=\varepsilon \frac{h''}h+\frac{\tau}{n-1},  \\
	\noalign{\medskip}
	\lambda&=&\frac{\tau- \alpha}{n-1}= \varepsilon \frac{\varphi''}{\varphi}+\frac{\tau}{n-1}, \quad &\tau&=-(n-1)\varepsilon(\frac{h''}h+(n-1)\frac{h'\varphi'}{h\varphi}).
\end{array}
\]
Note that, for $\tau$, we have used the second expression for $\alpha$ and the fact that, by \eqref{eq:vacuum-Einstein-field-equations-2}, $h\left(\lambda-\frac{\tau}{n-1}\right)g(E_i,E_i)=\Hes_h(E_i,E_i)= \varepsilon\frac{\varphi' h'}{\varphi}g(E_i,E_i)$, where we have used \eqref{eq:connection-warped-product} to obtain the last expression.

The non-diagonal terms of $G^h=h\rho-\Hes_h+\Delta h g$ vanish identically. Hence we compute the diagonal terms as follows:
\[
\begin{array}{rcl}
	0=G^h(E_1,E_1)&=&-(n-1)\frac{h\varphi''}{\varphi}+h''+(n-1)\frac{h'\varphi'}{\varphi}-h''	 \\
	\noalign{\medskip}
	&=&(n-1)\left(\frac{h'\varphi'-h\varphi''}{\varphi}\right), \\
	\noalign{\medskip}
	0=G^h(E_i,E_i)&=&\varepsilon\left(h\left( \frac{\varphi''}{\varphi}+\varepsilon\frac{\tau}{n-1}\right)+\left(h''+(n-1)\frac{h'\varphi'}{\varphi}\right)-\frac{h'\varphi'}{\varphi}\right)g(E_i,E_i)	 \\
	\noalign{\medskip}
	&=&\varepsilon\left(h''+h(n-1) \frac{\varphi''}{\varphi}+h \varepsilon \frac{\tau}{n-1}\right)g(E_i,E_i),
\end{array}
\]
where we have used the relation $h'\varphi'-h\varphi''=0$ that we got from the first expression to simplify the second one.
Hence we obtain that the system of ODEs given in \eqref{eq:weighted-field-ODEs} are necessary and sufficient conditions for a warped product as above to be a solution of \eqref{eq:vacuum-Einstein-field-equations}.

Assume now that $g(\nh,\nh)=0$ on an open subset $\mathfrak{U}\subset M$. Since $D=0$, we use Theorem~\ref{th:loc-conf-flat-nisolutions} to see that the Ricci tensor is either flat or $2$-step nilpotent. Moreover, $\tau=J=0$ and the only non-zero term of the Ricci tensor is $\rho(U,U)=\nu$.
Therefore, the manifold is a $pp$-wave if and only if $R(\mathcal{D}^\perp,\mathcal{D}^\perp,\cdot,\cdot)=0$ (see \cite{leistner}).  
  
Notice that $\mathcal{D}^\perp=\operatorname{span}\{\nabla h, X_1,\dots,X_{n-2}\}$ and,
since $W=0$, we have that $R=P\KN g=\frac{1}{ n-2}\rho\KN g$. Hence by directly substituting in this expression we get that $R(\mathcal{D}^\perp,\mathcal{D}^\perp,\cdot,\cdot)=0$ and that  $(M,g)$ is indeed a $pp$-wave.

Now, locally conformally flat $pp$-waves are plane waves that admit local coordinates $(u,v,x_1\dots,x_{n-2})$ such that the metric takes de form
\[
g(u,v, x_1,\dots, x_{n-2})= 2 dv du+F(v,x_1,\dots, x_{n-2}) dv^2+\sum_{i=1}^{n-2} dx_i^2,
\]
where $F(v,x_1,\dots, x_{n-2})=\frac{a(v)}{ n-2} \sum_{i=1}^{n-2} x_i^2+\sum_{i=1}^{n-2} b_i(v) x_i+c(v)$ (see, for example, \cite{BV-GR-GF}). With respect to these coordinates, the only non-vanishing component of the Ricci tensor is $\rho(\partial_u,\partial_u)=-a(u)$. Moreover, because the distinguished parallel lightlike distribution of the $pp$-wave corresponds with $\nabla h$ by construction, it follows that $\nabla h$ is a multiple of $\partial_u$, so $h(v,u,x_1,\dots,x_{n-2})=h(v)$. Now, a direct computation of the terms in \eqref{eq:vacuum-Einstein-field-equations} yields the only condition:
\[
-a(v) h(v)-h''(v)=0,
\]
from where case (2) follows. \qed

\begin{remark}\label{re:kobayashi}
The system of ODEs \eqref{eq:weighted-field-ODEs} was obtained in \cite{Kobayashi} for $\varepsilon=1$ in the Riemannian setting. An analogous reasoning to that in \cite{Kobayashi} shows that from \eqref{eq:weighted-field-ODEs}, it follows that
\[
\begin{array}{rcl}
	\gamma&=&\varphi^{n-1}\varphi''+\frac{\varepsilon \tau}{n(n-1)}\varphi^n, \\
	\noalign{\medskip}
	\frac{\varepsilon\kappa}{n-2}&=&(\varphi')^2+\frac{2\gamma}{n-2}\varphi^{2-n}+\frac{\varepsilon \tau}{n(n-1)}\varphi^2,
\end{array}
\]
where $\gamma,\kappa$ are real constants and $\rho^N=\kappa g^N$. The discussion of the solutions to these ODEs in \cite{Kobayashi} in terms of the constants $\tau$, $\kappa$ and $\gamma$, also applies to the Lorenztian case by substituting $\tau$ and $\kappa$ with $\varepsilon \tau$ and $\varepsilon \kappa$ respectively. Note that these ODEs are also satisfied in the case $D=0$ (not necessarily locally conformally flat), if we allow a generic Einstein fiber instead of a fiber of constant sectional curvature.
\end{remark}

Isotropic locally conformally flat solutions are completely characterized, for arbitrary dimension, in Theorem~\ref{th:loc-conf-flat-ndim}~(2). For the non-isotropic case, we give a detailed description of solutions in dimension four as follows, where the nature of solutions depends on the sign of the scalar curvature, which is constant by Lemma~\ref{le:const-sc}.

\begin{corollary}\label{th:loc-conf-flat-warped}
	Let $(M,g,h)$ be a non-isotropic, non-flat solution of the vacuum weighted Einstein field equations with $\dim M=4$ and vanishing augmented Cotton tensor. Then $M$ decomposes locally as a product $I\times N$, where $I\subset \mathbb{R}$ is an open interval with $\nh$ tangent to $I$; and $N$ is a $3$-dimensional manifold with constant sectional curvature $\kappa$. Moreover, the metric and the density functions satisfy one of the following:
		\begin{enumerate}
			\item $g$ is a direct product metric $\varepsilon dt^2+g^N$ with $t$ a coordinate  parameterizing $I$ by arc length such that 
			\[
			\begin{array}{rclr}
				h(t)&=&c_1\sin\left(\frac{\sqrt{2\varepsilon\kappa}}\varphi t\right)+c_2\cos\left(\frac{\sqrt{2\varepsilon\kappa}}\varphi t\right), \,&\text{ if } \varepsilon\kappa>0, \\
				\noalign{\medskip}
				h(t)&=&c_1e^{\frac{\sqrt{-2\varepsilon\kappa}}{\varphi} t}+c_2e^{- \frac{\sqrt{-2\varepsilon\kappa}}{\varphi} t}, \,&\text{ if } \varepsilon\kappa<0,
			\end{array}
			\]
			\item $g$ is a warped product metric $\varepsilon dt^2+\varphi(t)^2g^N$ with $t$ a coordinate  parameterizing $I$ by arc length such that the density function $h$ satisfies $h(t)=A\varphi'(t)$, $A\in \mathbb{R}^*$, and $\varphi$ takes the following forms, depending on the sign of the scalar curvature $\tau$ of the product:
			\[
			\begin{array}{rclrl}
				\varphi(t)^2&=&\frac{6\kappa}{\tau}+c_1\sin\left(\sqrt{\frac{\varepsilon\tau}{3}}t\right)+c_2\cos\left(\sqrt{\frac{\varepsilon\tau}{3}}t\right), \,&&\text{ if } \varepsilon\tau>0, \\
				\noalign{\medskip}
				\varphi(t)^2&=&\frac{6\kappa}{\tau}+c_1e^{\sqrt{-\frac{\varepsilon\tau}{3}}t}+c_1e^{-\sqrt{-\frac{\varepsilon\tau}{3}}t}, \,&&\text{ if } \varepsilon\tau<0, \\
				\noalign{\medskip}
				\varphi(t)^2&=& \varepsilon\kappa t^2+c_1t+c_2, \,&&\text{ if } \tau=0,\, c_1^2\neq 4\varepsilon c_2\kappa,
			\end{array}
			\]
		\end{enumerate}
		where $A,c_1,c_2$ are suitable integration constants so that $\varphi^2(t),h(t)>0$ for all $t\in I$. 		
\end{corollary}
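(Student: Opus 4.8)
The plan is to build on the warped product structure supplied by Theorem~\ref{th:loc-conf-flat-nisolutions}~(1). Since $D=0$ and $g(\nh,\nh)\neq0$, about any point $(M,g)$ is locally a warped product $I\times_\varphi N$ with $N$ a $3$-dimensional Einstein manifold and $\nh$ tangent to $I$; as a $3$-dimensional Einstein manifold has constant sectional curvature $\kappa$, we have $\rho^N=\bar\kappa\,g^N$ with $\bar\kappa=2\kappa$, which already gives the stated decomposition. By Lemma~\ref{le:const-sc} the scalar curvature $\tau$ is constant; the computation of the diagonal components of $G^h$ on a warped product carried out in the proof of Theorem~\ref{th:loc-conf-flat-ndim} uses only the warped structure, so it shows that $h$ and $\varphi$ satisfy the system~\eqref{eq:weighted-field-ODEs} with $n=4$; and, as recorded in Remark~\ref{re:kobayashi}, $\varphi$ admits the first integrals $\gamma=\varphi^{3}\varphi''+\tfrac{\varepsilon\tau}{12}\varphi^{4}$ and $\tfrac{\varepsilon\bar\kappa}{2}=(\varphi')^{2}+\gamma\varphi^{-2}+\tfrac{\varepsilon\tau}{12}\varphi^{2}$, valid precisely in the $D=0$ setting.

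The main point is the substitution $w=\varphi^{2}$. Multiplying the second of these first integrals by $4\varphi^{2}$ and using $w'=2\varphi\varphi'$ turns it into $(w')^{2}=2\varepsilon\bar\kappa\,w-4\gamma-\tfrac{\varepsilon\tau}{3}w^{2}$; differentiating and cancelling $2w'$ then gives the linear constant-coefficient equation $w''+\tfrac{\varepsilon\tau}{3}w=\varepsilon\bar\kappa$. Solving this according to the sign of $\varepsilon\tau$ produces exactly the three closed forms for $\varphi^{2}$ in part~(2): a particular solution equal to $6\kappa/\tau$ (when $\tau\neq0$) plus trigonometric or exponential homogeneous terms according to whether $\varepsilon\tau>0$ or $\varepsilon\tau<0$, and $\varphi^{2}=\varepsilon\kappa\,t^{2}+c_{1}t+c_{2}$ when $\tau=0$, the leading coefficient being $\tfrac{\varepsilon\bar\kappa}{2}=\varepsilon\kappa$.

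It then remains to recover $h$ and to organize the dichotomy. If $\varphi$ is constant the warping is trivial, so $g$ is a direct product as in part~(1); here $w''=0$ forces $\tau=6\kappa/\varphi^{2}$, the first equation of~\eqref{eq:weighted-field-ODEs} holds automatically, and the second reduces to $h''+\tfrac{2\varepsilon\kappa}{\varphi^{2}}h=0$, whose solutions are the trigonometric functions when $\varepsilon\kappa>0$ and the exponential ones when $\varepsilon\kappa<0$; the value $\kappa=0$ is impossible since it would make the product flat. If $\varphi$ is non-constant, I would first argue that $\varphi'$ has no zeros on a suitable subinterval: at a zero $t_{0}$, the first equation of~\eqref{eq:weighted-field-ODEs} forces $\varphi''(t_{0})=0$, hence $w'(t_{0})=w''(t_{0})=0$, and the linear ODE for $w$ then makes $w$ constant if $\tau\neq0$ (a contradiction) and leaves only an isolated critical point if $\tau=0$, so one shrinks $I$. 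With $\varphi'$ nowhere vanishing, the first equation of~\eqref{eq:weighted-field-ODEs} reads $(\log h)'=(\log\varphi')'$, whence $h=A\varphi'$ with $A\in\mathbb{R}^{*}$, and substituting $h=A\varphi'$ into the second equation reduces it to the first integral $\gamma=\varphi^{3}\varphi''+\tfrac{\varepsilon\tau}{12}\varphi^{4}$, which the $w$-equation already guarantees. When $\tau=0$, non-flatness forbids $w$ from being a perfect square, which is precisely the condition $c_{1}^{2}\neq4\varepsilon c_{2}\kappa$ (otherwise $\varphi'$, hence $h$, would be constant). Finally, in every case one restricts $I$ and picks the integration constants $A,c_{1},c_{2}$ so that $\varphi^{2}>0$ and $h>0$ on $I$; the converse assertion follows by reading the computations backwards.

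I expect the only delicate points to be spotting the linearizing substitution $w=\varphi^{2}$, tracking carefully the factor $2$ between the sectional curvature $\kappa$ of $N$ and the Einstein constant $\bar\kappa$ of $\rho^{N}$ in dimension three, and correctly discarding the degenerate sub-cases --- constant $\varphi$ with $\kappa=0$, and a perfect-square $w$ when $\tau=0$ --- which are excluded by the non-flatness hypothesis and by the standing assumption that $h$ is non-constant.
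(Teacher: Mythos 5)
Your proposal is correct and follows essentially the same route as the paper's proof: both pass through the warped product supplied by Theorem~\ref{th:loc-conf-flat-nisolutions}~(1) with a $3$-dimensional Einstein (hence constant-curvature) fiber, split on whether $\varphi'$ vanishes using the first equation of \eqref{eq:weighted-field-ODEs} to get $h=A\varphi'$ in the non-constant case, and integrate the same linear equation $w''+\tfrac{\varepsilon\tau}{3}w=2\varepsilon\kappa$ for $w=\varphi^2$. The only difference is cosmetic --- you reach that equation by differentiating the first integrals of Remark~\ref{re:kobayashi} (correctly tracking the factor $2$ between the sectional curvature $\kappa$ and the Einstein constant of $N$), whereas the paper obtains it directly by equating the warped-product scalar curvature, written in terms of $F=\varphi^2$, to the constant $\tau$.
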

\begin{proof}
From the first ODE in \eqref{eq:weighted-field-ODEs}, it follows that either $\varphi'=0$, so we have a Riemannian product, or $h(t)=A\varphi'(t)$ with $A\in \mathbb{R}^*$ such that $h>0$ for all $t\in I$. In the first case, the remaining non-vanishing components of the weighted Einstein field equations take the form
\[
	0=G^h(E_i,E_j)=\varepsilon\left(h''+\varepsilon \frac{\tau}3 h \right)g(E_i,E_j),
\]
where  $\tau=\frac{6\kappa}{\varphi^2}$. We can solve the resulting ODE  $0=h''+\frac{2\varepsilon\kappa}{\varphi^2} h$ to get the density function. Note that, if $\kappa=0$, then the manifold is flat. 

Now, assume that $\varphi'\neq 0$, so $h(t)=A\varphi'(t)$, and take $F(t)=\varphi(t)^2$. Then, we compute the scalar curvature of the warped product, in terms of $\kappa$ and $F$, using the warped product curvature expressions, resulting in the equation  $0= \tau F-3(2\kappa-\varepsilon F'')$. We solve this ODE to get the different forms of $\varphi^2$. Then, a direct computation shows that all components of the weighted Einstein field equations  vanish. Note that, if $\tau=0$ and $c_1^2=  4\varepsilon c_2\kappa$, the manifold is flat.
\end{proof}

For manifolds of dimensions other than 4, it is not possible to express all possible solutions in such a simple way as in Corollary~\ref{th:loc-conf-flat-warped}. However, we refer to Section~\ref{sec:4-dim-diagonal} for some generalizations of Kobayashi's locally conformally flat static spaces to Lorentzian signature that are of special interest, since they appear as submanifolds of higher-dimensional solutions. The following example will be used to illustrate this fact.

\begin{example}\label{ex:3-dim-loc-conf-flat-sols}
	Let $(I\times_\varphi N,g,h)$ be a 3-dimensional (Riemannian or Lorentzian) SMMS, with $N$ a surface of constant Gauss curvature $\kappa$. If this triple is a non-flat solution of the weighted Einstein field equations  \eqref{eq:vacuum-Einstein-field-equations} with vanishing scalar curvature $\tau$, then the system of ODEs \eqref{eq:weighted-field-ODEs} is satisfied and, moreover, $\gamma=\varphi^2 \varphi''$ and $ \varepsilon\kappa=(\varphi')^2+2\gamma\varphi^{-1}$ for some constant $\gamma\in \mathbb{R}\backslash\{0\}$, since the manifold becomes flat if $\gamma=0$. If $\gamma>0$, this corresponds to case IV.I in \cite{Kobayashi}, and if $\gamma<0$, to case III.1 (substituting $\kappa$ by $\varepsilon \kappa$). Notice that all solutions of this kind have two distinct Ricci eigenvalues.
\end{example}


\section{Solutions with harmonic curvature. The diagonalizable case}\label{sec:4-dim-diagonal}

We have already seen how local conformal flatness only allows for very specific warped product structures (see Theorem~\ref{th:loc-conf-flat-ndim} and Corollary~\ref{th:loc-conf-flat-warped}) and how they relate to the Riemannian static spaces discussed in \cite{Kobayashi}. In order to get a broader family of solutions with a more flexible geometry, we are going to focus on dimension four and impose a weaker restriction than local conformal flatness: harmonic Weyl tensor. Since every solution has constant scalar curvature (see Lemma~\ref{le:const-sc}), this is equivalent to the curvature tensor being harmonic. 

Our analysis is divided into several sections depending on the structure of the Ricci operator $\Ric$. We will prove shortly that, for solutions with harmonic curvature, $\nh$ is an eigenvalue of $\Ric$ (see Lemma~\ref{lemma:nh-Ricci-eigen}). If $g(\nh,\nh)\neq 0$, since $\Ric$ is self-adjoint, at each point of the manifold, it takes one of the following four forms (see \cite{Oneill}): On the one hand, relative to an orthonormal frame $\mathcal{B}_1=\{E_1=\nh/|\nh|,E_2,E_3,E_4\}$,
\begin{equation}\label{eq:matrices-type-IaandIb}
\Ric=\begin{pmatrix}
\lambda_1 & 0 & 0 & 0 \\
0 & \lambda_2 & 0 & 0 \\
0 & 0 & \lambda_3 & 0 \\
0 & 0 &  0& \lambda_4 
\end{pmatrix} \quad \text{or} \quad 	
	\Ric=\begin{pmatrix}
\lambda & 0 & 0 & 0 \\
0 & a & b & 0 \\
0 & -b & a & 0 \\
0 & 0 &  0& \alpha 
\end{pmatrix},
\end{equation}
with $b\neq 0$. Following standard terminology, we refer to these structures as \textit{Type I.a} and \textit{Type I.b}, respectively.

On the other hand, relative to a pseudo-orthonormal frame of the form $\mathcal{B}_2=\{ E_1=\nh/|\nh|,U,V,E_2\}$, where the only non-vanishing terms of the metric are $g(E_i,E_i)=1$, $i=1,2$, $g(U,V)=1$, there are two more possible forms:
\begin{equation}\label{eq:matrices-type-IIandIII}
\Ric=\begin{pmatrix}
\lambda & 0 & 0 & 0 \\
0 & \alpha & 0 & 0 \\
0 & \varepsilon & \alpha & 0 \\
0 & 0 &  0& \beta 
\end{pmatrix} \quad \text{or} \quad 	\Ric=\begin{pmatrix}
\lambda & 0 & 0 & 0 \\
0 & \alpha & 0 & 1 \\
0 & 0 & \alpha & 0 \\
0 & 0 & 1 & \alpha 
\end{pmatrix}  ,
\end{equation}
which we call \textit{Type II} and \textit{Type III} respectively. 

For solutions with $g(\nh,\nh)=0$, the Ricci operator is nilpotent \cite{Brozos-Mojon}. Moreover, there exists an adapted pseudo-orthonormal frame $\mathcal{B}_0=\{\nabla h, U, X_1, X_2\}$ such that the non-zero terms of the metric tensor are $g(\nabla h,U)=g(X_i,X_i)=1$, $i=1,2$, and the Ricci operator takes the form
\begin{equation}\label{eq:matrices-iso}
\Ric=\begin{pmatrix}
0 & \nu & \mu & 0 \\
0 & 0 & 0 & 0 \\
0 & \mu & 0 & 0 \\
0 & 0 & 0 & 0 
\end{pmatrix}  .
\end{equation}
Hence, the isotropic solution is Ricci-flat (corresponding to Type I.a), 2-step nilpotent ($\mu=0$ and $\nu \neq 0$, Type II) or 3-step nilpotent ($\mu\neq 0$, Type III).

Henceforth, we will assume that the Ricci operator of any solution is of constant type in the manifold. Otherwise, one would restrict to an open subset where this happens. In this section, we treat the diagonalizable case (see Section~\ref{sec:non-diagonalizable-complex} for a study of Type I.b and Section~\ref{sec:non-diagonalizable-real} for details on Type II and Type III).  Solutions with harmonic curvature were previously considered in Riemannian signature in \cite{Kim-Shin}, where the Ricci operator is necessarily diagonalizable. Motivated by this work, we follow some of the arguments applied to static spaces to obtain all possible solutions in this setting (see also \cite{Derdzinski-Codazzi} for the study of eigendistributions of the Ricci operator on manifolds with harmonic curvature, whose arguments will be mimic at some instances, and \cite{cao-chen} for related arguments for Ricci solitons).
Unsurprisingly, some of the results in this section are reminiscent of those in \cite{Kim-Shin}. However, the fact that we are working in Lorentzian signature allows for greater flexibility and gives rise to new geometric structures when  the solution is isotropic. Moreover, if $\nh$ is timelike, we will see that the Ricci operator is necessarily diagonalizable, so all solutions with this character of $\nh$ are described below in Theorem~\ref{th:classification-diagonalizable}.

Much like in the Riemannian case, the geometric structure of a non-isotropic solution strongly depends on the number of distinct eigenvalues of $\Ric$. Arguments in \cite{Derdzinski-Codazzi} and \cite{Kim-Shin} show that this number does not change in an open dense subset of $M$. Indeed, for $x\in M$, let $E_{\Ric}(x)$ be the number of distinct eigenvalues of $\Ric_x$ and set $M_{\Ric}=\{x\in M \colon E_{\Ric} \text{ is constant in a neighborhood of } x\}$. It is clear that $M_{\Ric}$ is open.
	To show that this subset is dense, take $x\in M$ and consider any open ball $B$ centered at $x$. Since the rank of $E_{\Ric}$ is finite, there is a point $q\in B$ where $E_{\Ric}(q)$ is the maximum of $E_{\Ric}$ on $B$. Since a small variation of the eigenvalues cannot decrease the value $E_{\Ric}(q)$ and because it is maximum by definition, there is a neighborhood of $q$ where  $E_{\Ric}= E_{\Ric} (q)$, so $q\in M_{\Ric}$.
	Therefore, since we are working locally, we will treat those points that belong to  $M_{\Ric}$ for the non-isotropic case.

\begin{theorem}\label{th:classification-diagonalizable}
Let $(M,g,h)$ be a $4$-dimensional smooth metric measure space with diagonalizable Ricci operator and harmonic curvature (not locally conformally flat).

\begin{enumerate}
	\item  If $g(\nh,\nh)=0$, then $(M,g)$ is a solution of equation~\eqref{eq:vacuum-Einstein-field-equations} if and only if $(M,g)$  is a Ricci-flat $pp$-wave and, in appropriate local coordinates $\{u,v,x_1,x_2\}$, it can be written as
	\begin{equation}\label{eq:local-coordinates-pp-wave}
	g(u,v,x_1,x_2)=2 du dv+ F(v,x_1,x_2) dv^2+dx_1^2+dx_2^2,
	\end{equation}
	with $\Delta_x F=\partial_{x_1}^2 F+\partial_{x_2}^2 F=0$, and $h(u,v,x_1,x_2)=v$.
	\item  If $g(\nh,\nh)\neq 0$ and $(M,g,h)$ is a solution to \eqref{eq:vacuum-Einstein-field-equations}, then  $\left(M_{\Ric},\restr{g}{M_{\Ric}}\right)$ is locally isometric to:
	\begin{enumerate}
		\item   A direct product $I_2\times \tilde M$, where $\tilde M=I_1\times_\xi N$ is a warped product $3$-dimensional solution with $\tilde \tau=0$ and $N$ a surface of constant Gauss curvature. Moreover, $h=c\xi'$ is defined on $I_1$. 
		\item  A direct product $N_1\times N_2$ of two surfaces of constant Gauss curvature $\frac{\kappa}{2}$ and $\kappa$, respectively. The density function is defined on $N_1$ and is a solution to the Obata equation  $\Hes_{h}^{N_1}=-\frac{\kappa h}2 g^{N_1}$.
	\end{enumerate}
\end{enumerate}
\end{theorem}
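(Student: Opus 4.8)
The plan is to treat the two causal characters of $\nh$ separately, handling the isotropic case first and postponing the most delicate computations to an appendix. For part (1): if $(M,g,h)$ is an isotropic solution, then $\Ric$ is nilpotent by \cite{Brozos-Mojon}, and being diagonalizable by hypothesis it must vanish, so $(M,g)$ is Ricci-flat, $\tau=0$, and the harmonic curvature hypothesis becomes automatic. Equation \eqref{eq:vacuum-Einstein-field-equations-2} then reduces to $\Hes_h=0$, so $\nh$ is a parallel lightlike vector field and $(M,g)$ is a Brinkmann wave with distinguished parallel vector $\nh$. Working in a pseudo-orthonormal frame $\{\nh,U,X_1,X_2\}$ adapted to the null direction, the pair symmetry of $R$ together with $\nabla\nh=0$ give $R(\nh,X_i,\cdot,\cdot)=0$, while Ricci-flatness in dimension four forces the remaining transverse component $R(X_1,X_2,\cdot,\cdot)$ to vanish as well; hence $R(\nh^\perp,\nh^\perp,\cdot,\cdot)=0$, so $(M,g)$ is a $pr$-wave and, $\nh$ being parallel, a $pp$-wave, for which coordinates as in \eqref{eq:local-coordinates-pp-wave} exist and Ricci-flatness reads $\Delta_xF=0$. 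Finally $\nh\parallel\partial_u$ forces $h=h(v)$, and then $\Hes_h=0$ forces $h$ to be affine in $v$, which after normalization is $h=v$; the hypothesis that $(M,g)$ is not locally conformally flat discards the flat sub-case in which $F$ is affine in the $x_i$. The converse is immediate, since for such data $\rho=0$, $\tau=0$ and $\Hes_h=0$ give $G^h=0$.

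For part (2), assume $\nh$ non-null and restrict to the open dense subset $M_{\Ric}$. By Lemma~\ref{lemma:nh-Ricci-eigen}, $\nh$ is an eigenvector of the (diagonalizable, self-adjoint) Ricci operator; fixing an eigenframe $\{E_1=\nh/|\nh|,E_2,E_3,E_4\}$ with $\Ric E_i=\lambda_i E_i$, the eigenvalues $\lambda_i$ are real. Writing the equation as $\Hes_h(X,Y)=h(\rho(X,Y)-2Jg(X,Y))$ shows that $\Hes_h$ is diagonalized by the same frame, $\Hes_h(E_i,E_j)=h(\lambda_i-2J)\varepsilon_i\delta_{ij}$; in particular $\nabla_{\nh}\nh$ is a multiple of $\nh$ (so the line field generated by $\nh$ is totally geodesic), $dh$ is supported along $E_1$ (so $h$ varies only in the $\nh$ direction), and the nondegenerate level sets of $h$ have shape operator $\tfrac{h}{|\nh|}(\Ric-2J\,\Id)|_{\nh^\perp}$. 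Since $\tau$ is constant by Lemma~\ref{le:const-sc}, harmonic curvature is equivalent to $\rho$ being a Codazzi tensor, whose eigendistributions then obey Derdzinski's structure theory \cite{Derdzinski-Codazzi}: an eigendistribution of multiplicity $\geq 2$ is integrable with totally umbilical leaves of constant curvature, its orthogonal complement is integrable, and an eigenvalue of multiplicity $\geq 2$ is constant along its own eigendistribution. Moreover \eqref{eq:Rnf} becomes, using $dP=0$, $R(\nh,X,Y,Z)=((\rho-2Jg)\wedge dh)(X,Y,Z)$, which ties the curvature along $\nh$ to the eigenvalue data. With these ingredients one runs, in the spirit of the Riemannian arguments of \cite{Kim-Shin}, a case analysis according to the multiplicity pattern of the eigenvalues of $\Ric$ on $M_{\Ric}$.

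If $\Ric$ has a triple eigenvalue, the splitting gives that $(M,g)$ is locally a warped product $I\times_\varphi N^3$ with $N^3$ Einstein, hence of constant curvature, hence locally conformally flat, and this case is discarded. If $\Ric$ has two double eigenvalues with $\nh$ in one of them, one obtains a local product $N_1^2\times N_2^2$ of surfaces; the Codazzi condition forces each Gauss curvature to be constant, and evaluating the equation on the $N_2$-factor (where $h$ is constant and so has vanishing Hessian) gives that the Gauss curvature of $N_2$ equals $2J$; together with $\tau=2K_{N_1}+2K_{N_2}$ and $J=\tau/6$, this yields $K_{N_1}=\tfrac{\kappa}{2}$, $K_{N_2}=\kappa$ and turns the equation on $N_1$ into the Obata equation $\Hes_h^{N_1}=-\tfrac{\kappa h}{2}g^{N_1}$; this is case (2b). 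The remaining possibility is three distinct eigenvalues with multiplicities $\{2,1,1\}$: one shows that $\nh$ lies in a simple eigenspace, that the double eigendistribution yields a totally umbilical surface $N$ of constant Gauss curvature, and that the other simple eigendistribution splits off as a direct factor $I_2$ with vanishing Ricci eigenvalue, so $J=0$ and $\tau=0$; what is left is a $3$-dimensional warped product solution $\tilde M=I_1\times_\xi N$ with $\tilde\tau=0$, and the $n=3$ instance of \eqref{eq:weighted-field-ODEs} (cf. Example~\ref{ex:3-dim-loc-conf-flat-sols}) gives $h=c\,\xi'$; this is case (2a). In each case the converse is a direct verification.

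The hard part will be this three-eigenvalue case: showing that $\nh$ cannot belong to the double eigenspace and that the extra simple eigendistribution detaches as a flat direct factor, while keeping careful track of how the eigenvalue functions and $|\nh|$ vary along the different eigendistributions. This is where the Codazzi structure equations must be combined most delicately with the field equations, and these computations are the ones relegated to the Appendix.
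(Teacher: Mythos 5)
Your part (1) follows the paper's proof essentially verbatim (nilpotent plus diagonalizable forces $\Ric=0$, hence $\Hes_h=0$, Brinkmann wave, then $R(\nh^\perp,\nh^\perp,\cdot,\cdot)=0$ gives a $pp$-wave, coordinates, $h=v$), and your overall strategy for part (2) --- the eigenvector lemma for $\nh$, the orthogonal splitting $\varepsilon\,dt^2\oplus g^N$, eigenvalues depending only on $t$, the Codazzi/Derdzi\'nski structure of the eigendistributions, and a case analysis on eigenvalue multiplicities with the hardest case deferred to an appendix --- is also the paper's.

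The genuine gap is in your enumeration of cases for part (2). You assert that after the triple-eigenvalue pattern and the $\{2,2\}$ pattern, ``the remaining possibility is three distinct eigenvalues with multiplicities $\{2,1,1\}$.'' This omits the pattern $\{1,1,1,1\}$ in which all four Ricci eigenvalues are pairwise distinct. In the paper's organization the decisive dichotomy is on the three eigenvalues $\lambda_2,\lambda_3,\lambda_4$ orthogonal to $\nh$: all equal, exactly two equal, or pairwise distinct. The last configuration --- which covers both your missing $\{1,1,1,1\}$ pattern and the $\{2,1,1\}$ sub-case in which $\nh$ sits in the repeated eigenspace --- is exactly what the paper's Appendix (Lemmas~\ref{lemma:Ricci-components} and \ref{lemma:all-eigen-different}) is devoted to excluding, by computing the curvature components from the connection coefficients, extracting two independent expressions for $h'/h$, solving for $\Gamma=\Gamma_{342}\Gamma_{432}$, and reaching the sign contradiction $(\Gamma_{432})^2=-\tfrac{(a-c)^2(b-c)^2}{4P}\le 0$. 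Your appendix items (``$\nh$ cannot belong to the double eigenspace'' and ``the simple eigendistribution detaches as a flat factor'') touch only part of this; when all four eigenvalues are distinct there is no double eigenspace at all, and nothing in your sketch reaches that configuration. Without ruling it out the classification into (2a)--(2b) is incomplete. A secondary, lesser point: your passage from the two-coincide pattern to a direct product (of $I_2\times\tilde M$ or of two surfaces) compresses the paper's intermediate step through a multiply warped product $I_1\times_\varphi I_2\times_\xi N$ and the nontrivial Lemma~\ref{lemma:mult-warping-constant} showing that one warping function must be constant (a rank argument on a linear system in $h,h'$ obtained from $G^h=0$ and \eqref{eq:Rnf}); you do flag this as appendix work, but it is a substantive computation rather than a formal consequence of the splitting.
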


\begin{remark}
	Notice that the condition on the defining function $F$ of the $pp$-wave metric in \eqref{eq:local-coordinates-pp-wave} resembles the Laplace equation. Thus, consider, for example, a solution of the form $F(v,x_1,x_2)=f(v) (x_1^2-x_2^2)$ to build solutions with harmonic Weyl tensor but which are not locally conformally flat. Indeed, the non-vanishing components of the Weyl tensor are, up to symmetries, \[W(\partial_v,\partial_{x_1},\partial_v,\partial_{x_1})=W(\partial_v,\partial_{x_2},\partial_v,\partial_{x_2})=-f(u).\]
\end{remark}

\begin{remark}
	Solutions in Theorem~\ref{th:classification-diagonalizable}~(2)-(a) are build from $3$-dimensional locally conformally flat solutions with vanishing scalar curvature (see Example~\ref{ex:3-dim-loc-conf-flat-sols}), just by adding an Euclidean factor, and result in a multiply warped product of the form $I_1\times I_2\times_\xi N$. 	
	Note that the timelike direction corresponds either to the factor $I_1$ or the factor $I_2$. If $I_1\times_\xi N$ is Riemannian with $N$ of constant positive Gauss curvature ($\kappa>0$), it was pointed out by Kobayashi that there is a solution in $\mathbb{R}\times S^2$ which contains a spatial slice of the well-known Schwarzschild spacetime. Thus, we  construct a solution on the 4-dimensional spacetime $\mathbb{R}\times (\mathbb{R}\times_\varphi S^2)$, with the metric given by $g=-ds^2\oplus g^{\textrm{Sch}}$, where  $g^{\textrm{Sch}}$ stands for the spatial part of the Schwarzschild metric (see \cite{stephani-et-al} for details on this solution). In contrast, solutions for $\kappa<0$ are incomplete  (cf. \cite[Example 3]{Kim-Shin}). 
		
From Example~\ref{ex:3-dim-loc-conf-flat-sols}, taking $\kappa=0$ allows for an explicit expression  for $\varphi(t)$. Indeed, if $\kappa=0$, then $(\varphi')^2+2a\varphi^{-1}=0$ and $a=\varphi^2\varphi''$. Thus, we can write $(\varphi')^2+2\varphi\varphi''=0$. Solving this ODE yields (after a translation of $t$, if needed) $\varphi(t)=K_1t^{2/3}$ and $h(t)=K_2 t^{-1/3}$ for some suitable $K_1,K_2\in \mathbb{R}^*$. This gives solutions with Ricci eigenvalues $\left\{ \frac{4\varepsilon}{9t^2},0\right\}$ (simple) and $\left\{ -\frac{2\varepsilon}{9t^2}\right\}$ (double).
\end{remark}

\noindent{\it Proof of Theorem~\ref{th:classification-diagonalizable}~(1).}
If $g(\nh,\nh)=0$, it was shown in \cite{Brozos-Mojon} that the manifold is necessarily Ricci-flat (hence all solutions of this type have harmonic curvature tensor), and $\Hes_h=0$, so the solution is a Ricci-flat Brinkmann wave with parallel vector field $\nh$. In this case, because $\nh$ is parallel, $R(\nh,X,Y,Z)=0$ for all vector fields $X$, $Y$, $Z$. Moreover, on a pseudo-orthonormal frame $\{\nh,U,X_1,X_2\}$, we have
\[
\begin{array}{rcl}
	0&=& \rho(X_1,X_1)=2 R(X_1,U,X_1,\nh)+R(X_1,X_2,X_1,X_2)=R(X_1,X_2,X_1,X_2),\\
	\noalign{\medskip}
	0&=& \rho(X_1,U)= R(X_1,U,U,\nh)+R(X_1,X_2,U,X_2)=R(X_1,X_2,U,X_2).
\end{array}
\]
Analogously, $R(X_1,X_2,U,X_1)=0$, so we conclude $R(X_1,X_2,\cdot,\cdot)=0$. Consequently,  $R(\nh^\perp,\nh^\perp,\cdot,\cdot)=0$ and the Brinkmann wave is indeed a $pp$-wave (see \cite{leistner}). 
There exist local coordinates $\{u,v,x_1,x_2\}$ so that the metric is given by \eqref{eq:local-coordinates-pp-wave}. A direct computation shows that, the $pp$-wave is Ricci-flat if and only if the spacelike Laplacian vanishes: $\partial_{x_1}^2 F+\partial_{x_2}^2 F=0$. Moreover, $h$ is only a function of $v$ and, since $\Hes_h=0$, we have $h''(v)=0$. Now, the coordinate $v$ can be normalized so that $h(u,v,x_1,x_2)=v$.
\qed

\vspace{1em}

Non-isotropic solutions require a deeper analysis to provide the classification in Theorem~\ref{th:classification-diagonalizable}~(2). 
Thus, throughout the rest of this section, all solutions are assumed to be non-isotropic. Following ideas developed in \cite{Kim-Shin} for the Riemannian counterpart, we establish some preliminary results.  Although we are focusing on 4-dimensional manifolds, they apply to solutions of arbitrary dimension.

\begin{lemma}\label{lemma:nh-Ricci-eigen}
	For any $n$-dimensional solution $(M,g,h)$ of the weighted Einstein field equations \eqref{eq:vacuum-Einstein-field-equations} with harmonic curvature, $\Ric(\nh)=\lambda \nh$ for some smooth function $\lambda$ on $M$.
\end{lemma}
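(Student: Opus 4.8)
The plan is to read the conclusion straight off identity~\eqref{eq:Rnf}, which is already available for every solution. The first step is to simplify it under our hypotheses: since $\tau$ is constant by Lemma~\ref{le:const-sc} and the curvature is harmonic, the Cotton tensor vanishes, $dP=\operatorname{div}R=0$, so \eqref{eq:Rnf} reduces to
\[
R(\nh,X,Y,Z)=(\rho-2Jg)(X,Y)\,dh(Z)-(\rho-2Jg)(X,Z)\,dh(Y)
\]
for all $X,Y,Z\in\mathfrak{X}(M)$.

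The key step is then to substitute $X=\nh$ and use that $R$ is skew-symmetric in its first two arguments, so that the left-hand side vanishes identically. This gives
\[
(\rho-2Jg)(\nh,Y)\,dh(Z)=(\rho-2Jg)(\nh,Z)\,dh(Y)\qquad\text{for all }Y,Z\in\mathfrak{X}(M),
\]
i.e. the $1$-form $\psi:=(\rho-2Jg)(\nh,\cdot)$ and the $1$-form $dh$ are pointwise linearly dependent at every point of $M$. I would remark here that a plain contraction of the simplified version of \eqref{eq:Rnf} only returns a tautology (one checks $2Jn=\tau+2J$), so the substitution $X=\nh$ is really the point of the argument.

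The final step uses the standing assumption $\nh\neq0$: then $dh$ is a nowhere-vanishing $1$-form, and the dependence relation forces $\psi=c\,dh$ for a function $c$ on $M$, which is smooth because on any open set where a vector field $Y_0$ satisfies $dh(Y_0)\neq0$ one has $c=\psi(Y_0)/dh(Y_0)$. Since $dh(Y)=g(\nh,Y)$, this reads $\rho(\nh,Y)=(c+2J)\,g(\nh,Y)$ for all $Y$, i.e. $\Ric(\nh)=\lambda\nh$ with $\lambda:=c+2J$ smooth. I do not expect a genuine obstacle: the only points needing a little care are spotting that the useful move is to evaluate \eqref{eq:Rnf} on $X=\nh$ rather than to trace it, and noting that the proportionality conclusion still goes through in the isotropic case, where $g(\nh,\nh)=0$ but $dh\neq0$ nonetheless.
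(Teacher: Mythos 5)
Your proposal is correct and follows essentially the same route as the paper: both substitute $X=\nh$ into \eqref{eq:Rnf}, use the skew-symmetry of $R$ in its first two slots to kill the left-hand side, and conclude that $\iota_{\nh}(\rho-2Jg)$ is proportional to $dh$. The only cosmetic difference is that the paper evaluates on specific test vectors ($Y\perp\nh$, $g(Z,\nh)=1$) to read off $\rho(\nh,Y)=0$ directly, whereas you argue invariantly via the vanishing of $\psi\wedge dh$; both are equally valid and cover the isotropic case.
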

\begin{proof}
 Assume $\operatorname{div} R=dP=0$. In \eqref{eq:Rnf}, we can choose $X=\nh$, $Y\perp \nh$ and $Z$ such that $g(Z,\nh)=1$ to see that
\[
\begin{array}{rcl}
	0&=&R(\nh,\nh,Y,Z) \\
	\noalign{\smallskip}
	&=&dh(Z)(\rho-2Jg)(Y,\nh)-dh(Y)(\rho-2Jg)(Z,\nh)=\rho(Y,\nh)
\end{array}
\]
for every $Y\perp \nh$. Consequently, $\nh$ is an eigenvector of the Ricci operator $\Ric$.
\end{proof}

The fact that $\nh$ is a real eigenvector of $\Ric$ has important geometric consequences for vacuum solutions. For example, if $\nh$ is timelike, since $\Ric$ is self-adjoint, it follows that $\Ric$ diagonalizes in a suitable pseudo-orthonormal frame $\mathcal{B}_1=\{E_1,E_2,\dots,E_4\}$, where $E_1=\nh/|\nh|$ and $|\nh|=\sqrt{\varepsilon g(\nh,\nh)}$ (with $\varepsilon=g(E_1,E_1)=-1$), $g(E_2,E_2)=-\varepsilon$ and $g(E_i,E_i)=1$ for $i>2$. (see \cite{Oneill}). We will refer to this as an {\it adapted frame}. Consequently, all solutions with $\nh$ timelike are described by Theorem~\ref{th:classification-diagonalizable}.
 
 Note that, if $\nh$ is spacelike, this structure is also possible, with $\varepsilon=1$, but other Jordan forms for $\Ric$ can also arise.
Since we are assuming that $\Ric$ diagonalizes, let $\Ric E_i=\lambda_i E_i$. From the vacuum equation \eqref{eq:vacuum-Einstein-field-equations}, it follows that the Hessian operator $\hes_h=\nabla \nh$ diagonalizes in the frame  $\mathcal{B}_1$, with $\hes_hE_i=h(\lambda_i-2J)E_i$. In particular, this implies that $X(g(\nh,\nh))=2\Hes_h(\nh,X)=0$ for all $X\perp \nh$, so the distribution generated by $\nh$ is totally geodesic. This also means that $|\nh|$ is constant on each connected component of the level sets of $h$, so we can write $\nabla_{E_i}E_1=\beta_iE_i$ with $\beta_i=\frac{h(\lambda_i-2J)}{|\nh|}$.  Furthermore, we have 
\[
	0=E_i(g(\nh,E_j))=g(\nabla_{E_i}\nh,E_j)+g(\nh,\nabla_{E_i}E_j)=g(\nh,\nabla_{E_i}E_j)
\]
for $i,j> 1$, $i\neq j$. It follows that $\mathrm{span}\{E_2,\dots, E_4\}$ is closed under Lie bracket, and the distribution generated by $\mathrm{span}\{E_2,\dots, E_n\}$ is integrable, so $M$ splits locally as a product $I\times N$, where $I$ is an open interval to which $E_1$ is tangent, and $N$ projects onto the leaves of the foliation generated by $\mathrm{span}\{E_2,\dots, E_n\}$. Moreover, we have
\[
	 d\left(\frac{dh}{|\nh|}\right)=-\frac{1}{2|\nh|^3}d|\nh|^2\wedge d h=0,
\]
since $\nabla_X(|\nh|^2)=0$ for $X\perp \nh$. Thus, $dh/|\nh|$ is a closed form, and there is a local coordinate $t$ such that $dt=dh/|\nh|$. Note that $\nabla t=\nh/|\nh|=E_1$, so that $\nabla_{E_1}E_1=0$, and $h=h(t)$. With this, the metric takes the form
\begin{equation}\label{eq:decomposition}
	g^M= \varepsilon dt^2\oplus g^N,
\end{equation}
with $N$ Lorentzian if $\varepsilon=1$ and Riemannian if $\varepsilon=-1$. We compute  $\Hes_{h}(E_1,E_1)=h''$ and, from \eqref{eq:vacuum-Einstein-field-equations-2}, we have $\lambda_1= \varepsilon h^{-1}h''+2J$, so $\lambda_1$ depends only on $t$. With this, we prove the following lemma.

\begin{lemma}\label{lemma:coordinate-s-eigenvalues}
	Let $(M,g,h)$ be an $n$-dimensional non-isotropic solution of the weighted Einstein field equations \eqref{eq:vacuum-Einstein-field-equations} with harmonic curvature such that the Ricci operator $\Ric$ diagonalizes. Then, all eigenvalues of $\Ric$ depend only on the local coordinate $t$.
\end{lemma}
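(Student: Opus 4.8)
The plan is to show that each eigenvalue $\lambda_i$ is constant along the leaves of the foliation $N$, so that, being already a function on $M = I \times N$, it can only depend on the coordinate $t$. We already know from the discussion preceding the lemma that $\lambda_1$ depends only on $t$, so the work is to handle $\lambda_2, \dots, \lambda_n$. The key tool is the second Bianchi identity in the form $\operatorname{div} R = 0$ (equivalently $dP = 0$, since the scalar curvature is constant by Lemma~\ref{le:const-sc}), together with the Codazzi-type equation it yields: for all vector fields, $(\nabla_X \rho)(Y,Z) = (\nabla_Y \rho)(X,Z)$.

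First I would record the structure of the Levi-Civita connection adapted to the splitting \eqref{eq:decomposition} and the eigenframe $\mathcal{B}_1$: we have $\nabla_{E_1} E_1 = 0$, $\nabla_{E_i} E_1 = \beta_i E_i$ with $\beta_i = h(\lambda_i - 2J)/|\nh|$ (as derived above), and the remaining connection coefficients $\nabla_{E_i} E_j$ for $i,j>1$ lie in $\operatorname{span}\{E_2,\dots,E_n\}$. Then I would differentiate the eigenvalue relation $\rho(E_i, E_j) = \varepsilon_i \lambda_i \delta_{ij}$ (no sum) and use the Codazzi equation to extract constraints. Concretely, expanding $(\nabla_{E_1}\rho)(E_i, E_i) = (\nabla_{E_i}\rho)(E_1, E_i)$ for $i > 1$: the left side produces $E_1(\lambda_i) \varepsilon_i$ plus connection terms that vanish because $\nabla_{E_1} E_i$ has no $E_1$-component and the off-diagonal Ricci components vanish; the right side, using $\rho(E_1, E_i) = 0$ and $\rho(E_1,E_1) = \varepsilon_1\lambda_1$, $\rho(E_i,E_i)=\varepsilon_i\lambda_i$, produces a term proportional to $\beta_i(\lambda_1 - \lambda_i)$. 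This already ties $E_1(\lambda_i)$ to known $t$-dependent quantities. The crucial step is the analogous computation with a leaf direction: for $i, j, k > 1$ distinct, $(\nabla_{E_j}\rho)(E_i, E_i) = (\nabla_{E_i}\rho)(E_j, E_i)$ gives, after expanding and using that the Ricci tensor is diagonal in the frame, an expression of the form $E_j(\lambda_i)\varepsilon_i = (\text{connection coefficient})\cdot(\lambda_j - \lambda_i)$ type identities; combined with the case where two indices coincide, and with the first Bianchi/antisymmetry considerations, one isolates $E_j(\lambda_i) = 0$ for $j > 1$. The point is that the off-diagonal Ricci components are all zero, so the only surviving contributions to $(\nabla_{E_j}\rho)(E_i,E_i)$ other than $E_j(\lambda_i)$ involve $\rho(\nabla_{E_j}E_i, E_i) + \rho(E_i, \nabla_{E_j}E_i)$, and one shows these cancel or are forced to vanish by pairing with the symmetric Codazzi partner.

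The main obstacle I anticipate is bookkeeping the leaf-direction connection coefficients $\nabla_{E_i}E_j$ ($i,j>1$): unlike $\beta_i$, these are not a priori under control, and a careless expansion of the Codazzi identity will mix $E_j(\lambda_i)$ with unknown geometry of $N$. The way around this is to exploit the \emph{antisymmetry} of the Cotton tensor: $dP(X,Y,Z)$ is skew in $Y,Z$, so feeding in $(E_i; E_j, E_k)$ with all three indices $>1$ and distinct gives a clean relation, and the case $(E_i; E_i, E_j)$ gives another; subtracting the symmetric partners systematically eliminates the $\nabla^N$ terms, leaving only derivatives of eigenvalues and the already-known quantities $\beta_i$. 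One must be slightly careful when some eigenvalues coincide (the argument localizes to $M_{\Ric}$ where $E_{\Ric}$ is locally constant, but eigenvalue multiplicity can still be $>1$); in that case the relevant $\lambda_j - \lambda_i$ factors vanish and one instead reads off $E_j(\lambda_i) = 0$ directly from the surviving terms. Once $E_1(\lambda_i)$ is expressed via $t$-functions and $E_j(\lambda_i) = 0$ for all $j>1$, we conclude each $\lambda_i$ is a function of $t$ alone, which is the assertion. I would also remark that for $n = 4$ the number of cases is small enough that the computation can be done frame-component by frame-component without heavy machinery, but the argument as sketched works in arbitrary dimension, matching the statement of the lemma.
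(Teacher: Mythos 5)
Your strategy---proving directly that $E_j(\lambda_i)=0$ for every leaf direction $E_j$, $j>1$, by expanding the Codazzi identity for $\rho$---is genuinely different from the paper's, and it breaks down at exactly the point you flag as the main obstacle. Writing $\Gamma_{ijk}=g(\nabla_{E_i}E_j,E_k)$ and using that $\rho$ is diagonal in the frame and that $\Gamma_{ijk}=-\Gamma_{ikj}$, the Codazzi identity $(\nabla_{E_j}\rho)(E_i,E_i)=(\nabla_{E_i}\rho)(E_j,E_i)$ for $i\neq j$, $i,j>1$, reduces to the single relation
\[
\varepsilon_i\,E_j(\lambda_i)=(\lambda_j-\lambda_i)\,\Gamma_{iji},
\]
and none of the remaining Codazzi identities (three distinct indices $>1$, which involve only $\Gamma_{ijk}$ with three distinct lower indices, or the identities involving $E_1$, which control only $\nh(\lambda_i)$) provides an independent equation for $\Gamma_{iji}$. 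So the leaf derivative of $\lambda_i$ is tied to an a priori uncontrolled connection coefficient of the foliation; the ``systematic elimination of the $\nabla^N$ terms by antisymmetry'' you invoke does not happen, because the displayed relation already \emph{is} the cleaned-up output of the Codazzi identity. Indeed, in the paper the vanishing of $\Gamma_{iji}$ for $\lambda_i\neq\lambda_j$ is deduced in Lemma~\ref{lemma:eigen-integrable} \emph{from} $E_j(\lambda_i)=0$, i.e.\ from the conclusion of the present lemma, so the route you sketch is circular.

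The paper sidesteps leaf-direction identities entirely. It uses only the radial Codazzi identity $(\nabla_{E_i}\rho)(E_i,\nh)=(\nabla_{\nh}\rho)(E_i,E_i)$ to obtain $\nh(\lambda_i)=h(\lambda_1-\lambda_i)(\lambda_i-2J)$, and then exploits the constancy of $\tau$ (Lemma~\ref{le:const-sc}): applying $\nh$ to the power sums $\sum_j\lambda_j^{k}$ and substituting this formula shows by induction on $k$ that every power sum is a function of $t$ alone (the base case being $\sum_{j\geq2}\lambda_j=\tau-\lambda_1$), whence each eigenvalue depends only on $t$. To salvage your direct argument you would need an independent geometric reason for $(\lambda_j-\lambda_i)\Gamma_{iji}=0$, which is not available at this stage; the correct order of deduction is eigenvalues first, connection coefficients second.
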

\begin{proof}
From the harmonicity condition $\operatorname{div} R=0$, it follows that the Ricci tensor satisfies $(\nabla_{E_i}\rho)(E_i,\nh)=(\nabla_{\nh}\rho)(E_i,E_i)$. Then, using \eqref{eq:vacuum-Einstein-field-equations-2}, we see that, for $i\neq 1$,
\[
\begin{array}{rcl}
 	(\nabla_{E_i}\rho)(E_i,\nh)&=&E_i(\rho(E_i,\nh))-\rho(\nabla_{E_i}E_i,\nh)-\rho(E_i,\nabla_{E_i}\nh ) \\
	\noalign{\medskip}
	&=& (\lambda_1-\lambda_i)g(E_i,\nabla_{E_i}\nh)  \\
	\noalign{\medskip}
	&=& \varepsilon_i h(\lambda_1-\lambda_i)(\lambda_i-2J),
\end{array}
\] 
where $ \varepsilon_i=g(E_i,E_i)$. On the other hand, $(\nabla_{\nh}\rho)(E_i,E_i)=\varepsilon_i\nh(\lambda_i)$, so $\nh(\lambda_i)=h(\lambda_1-\lambda_i)(\lambda_i-2J)$. Now, since $\tau$ is constant and $\lambda_1$ depends only on $t$, we have that $\tau-\lambda_1=\sum_{i=2}^n\lambda_i$ depends only on $t$. Moreover,
\[
\begin{array}{rcl}
 	0=\nh(\tau)&=&\nabla h(\lambda_1)+\sum_{j=2}^n  h(\lambda_1-\lambda_j)(\lambda_j-2J) \\
	\noalign{\medskip}
	&=&\nabla h(\lambda_1)+h(\lambda_1+2J)\sum_{j=2}^n \lambda_j -h\left(2J(n-1)\lambda_1+\sum_{j=2}^n\lambda_j^{2}\right).
\end{array}
\]
Since $\nabla t=\nh/|\nh|$ and $|\nh|$ depends only on $t$, every term in the equation above, except for $\sum_{j=2}^n\lambda_j^{2}$, depends only on $t$. Thus, $\sum_{j=1}^n\lambda_j^{2}$ depends only on $t$ as well. We can perform this same process for any $k\in\{1,\dots,n-1\}$ by induction. 
\[
\begin{array}{rcl}
 	k^{-1}\nabla h\left(\sum_{i=1}^n\lambda_i^k\right)&=&\lambda_1^{k-1}\nh(\lambda_1)+\sum_{j=2}^n\lambda_j^{k-1}\nh(\lambda_j) \\
	\noalign{\medskip}
	&=&\lambda_1^{k-1}\nh(\lambda_1)+h\sum_{j=2}^n\lambda_j^{k-1}(\lambda_1-\lambda_j)(\lambda_j-2J) \\
	\noalign{\medskip}
	&=&\lambda_1^{k-1}\nh(\lambda_1)+h(\lambda_1+2J)\sum_{j=2}^n\lambda_j^k \\
	\noalign{\medskip}
	&&-h\left(2J\lambda_1\sum_{j=2}^n\lambda_j^{k-1}+\sum_{j=2}^n\lambda_j^{k+1}\right).
\end{array}
\]
By assumption, every term in the equation above, except for $\sum_{j=2}^n\lambda_j^{k+1}$, depends only on $t$. Thus, $\sum_{j=1}^n\lambda_j^{k+1}$ depends only on $t$ as well. As a result, each $\lambda_i$, $i=1,\dots, n$ depends only on $t$.  In particular, we have $E_i(\beta_j)=E_i(\lambda_j)=0$ for all $i,j=2,\dots,n$.
\end{proof}

It is well-known that the curvature tensor is harmonic if and only if the Ricci tensor is Codazzi. Previous results from \cite{Derdzinski-Codazzi} show that this Codazzi character imposes by itself some important restrictions on the geometry of the leaves of the eigendistributions of $\Ric$. Indeed, a version of the following result was proved in \cite{Derdzinski-Codazzi} and extends from Riemannian to Lorentzian signature when $\Ric$ is diagonalizable. We include the proof here in the interest of completeness and because we will use the weighted Einstein equation to provide additional information on the connection relations for a solution (see also \cite{Kim-Shin}).

\begin{lemma}[cf. \cite{Derdzinski-Codazzi}]\label{lemma:eigen-integrable}
	Let $(M,g,h)$ be an $n$-dimensional non-isotropic solution of the weighted Einstein field equations \eqref{eq:vacuum-Einstein-field-equations} with Codazzi and diagonalizable Ricci tensor.
Then, the distribution associated to each eigenvalue of $\Ric$ is integrable and their corresponding leaves are totally umbilical submanifolds of $M$.
\end{lemma}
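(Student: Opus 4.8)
The plan is to work with two distinct eigenvalues $\lambda\neq\mu$ of $\Ric$ and their eigendistributions $\mathcal{D}_\lambda$, $\mathcal{D}_\mu$, and to extract the integrability and umbilicity conditions directly from the Codazzi equation for $\rho$. First I would recall that harmonicity of the curvature is equivalent to $(\nabla_X\rho)(Y,Z)=(\nabla_Y\rho)(X,Z)$ for all $X,Y,Z$. Fix an eigenvalue $\lambda$ with local unit eigenvector fields spanning $\mathcal{D}_\lambda$, and pick $X,Y\in\mathcal{D}_\lambda$ and $Z\in\mathcal{D}_\mu$ with $\mu\neq\lambda$. Expanding $(\nabla_X\rho)(Y,Z)=X(\rho(Y,Z))-\rho(\nabla_XY,Z)-\rho(Y,\nabla_XZ)$ and using $\rho(Y,Z)=0$, $\rho(Y,\cdot)=\lambda g(Y,\cdot)$ on $\mathcal{D}_\lambda$, one gets $(\nabla_X\rho)(Y,Z)=(\mu-\lambda)g(Y,\nabla_XZ)=-(\mu-\lambda)g(\nabla_XY,Z)$, and similarly with $X,Y$ swapped. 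Subtracting the two Codazzi expressions yields $(\mu-\lambda)g([X,Y],Z)=0$, hence $g([X,Y],Z)=0$ for every $Z$ in any other eigendistribution; since also $[X,Y]\perp\mathcal{D}_\lambda$ is automatic up to the $\mathcal{D}_\lambda$-component, this shows $[X,Y]\in\mathcal{D}_\lambda$, i.e. $\mathcal{D}_\lambda$ is integrable. (A small point: when $\Ric$ has an eigenvalue of multiplicity one this is trivial; the genuine content is for multiplicity $\geq 2$, and one must be slightly careful to use the orthogonal decomposition $TM=\bigoplus_\nu\mathcal{D}_\nu$, which is legitimate since $\nh$ is non-isotropic and $\Ric$ is diagonalizable in a pseudo-orthonormal frame adapted to $\nh$ as set up before Lemma~\ref{lemma:coordinate-s-eigenvalues}.)

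Next, for total umbilicity of the leaves of $\mathcal{D}_\lambda$, I would compute the second fundamental form. For $X,Y\in\mathcal{D}_\lambda$ and $Z\in\mathcal{D}_\mu$, $\mu\neq\lambda$, the component $g(\nabla_XY,Z)$ of the second fundamental form in the direction $Z$ is, by the computation above, $(\mu-\lambda)^{-1}(\nabla_X\rho)(Y,Z)=(\mu-\lambda)^{-1}(\nabla_Z\rho)(X,Y)$ after one more application of Codazzi. Now $(\nabla_Z\rho)(X,Y)=Z(\lambda)g(X,Y)+(\lambda)g(\nabla_ZX,Y)+\lambda g(X,\nabla_ZY)-\rho(\nabla_ZX,Y)-\rho(X,\nabla_ZY)$; the last four terms cancel in pairs modulo $\mathcal{D}_\lambda^\perp$ contributions — more precisely $(\nabla_Z\rho)(X,Y)=Z(\lambda)g(X,Y)$ once one notes that $\nabla_ZX$ and $\nabla_ZY$ have vanishing pairing against the relevant tensors' eigencomponents. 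Hence $g(\nabla_XY,Z)=\dfrac{Z(\lambda)}{\mu-\lambda}\,g(X,Y)$, which is exactly the statement that the $Z$-component of the shape operator is a scalar multiple of the metric; collecting over a basis of $\mathcal{D}_\lambda^\perp$ gives that the second fundamental form of the leaf equals $g|_{\mathcal{D}_\lambda}\otimes \eta$ for a normal-bundle-valued $1$-form (in fact a fixed normal vector field built from the $Z(\lambda)/(\mu-\lambda)$), i.e. the leaves are totally umbilical in $M$.

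Finally I would fold in the extra information coming from the vacuum weighted Einstein equation, as promised in the lemma's statement: since $\hes_h=\nabla\nh$ is diagonalized in the same frame with $\hes_h E_i=h(\lambda_i-2J)E_i$ (equation~\eqref{eq:vacuum-Einstein-field-equations-2} and the discussion preceding Lemma~\ref{lemma:coordinate-s-eigenvalues}), one can read off the connection coefficients $\nabla_{E_i}E_1=\beta_i E_i$ with $\beta_i=h(\lambda_i-2J)/|\nh|$, and, using Lemma~\ref{lemma:coordinate-s-eigenvalues} that all $\lambda_i$ depend only on $t$, deduce that the umbilicity factor $Z(\lambda)/(\mu-\lambda)$ is controlled: for $Z=E_1$ (tangent to $I$) it is $\varepsilon\lambda'/(\mu-\lambda)$, while for $Z$ tangent to $N$ it vanishes because $E_j(\lambda_i)=0$. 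This will be the concrete payoff used in the subsequent classification. The main obstacle I expect is purely bookkeeping: carefully keeping track of which $\nabla$-terms lie in $\mathcal{D}_\lambda$ versus $\mathcal{D}_\lambda^\perp$ so that the cancellations in the Codazzi expansions are justified rigorously (particularly the claim that $(\nabla_Z\rho)(X,Y)=Z(\lambda)g(X,Y)$ for $X,Y\in\mathcal{D}_\lambda$), together with the degenerate-signature caveat that one cannot assume an orthonormal eigenbasis unless $\nh$ is timelike — but since we are in the non-isotropic diagonalizable case with $\nh$ either timelike or spacelike, a pseudo-orthonormal adapted frame exists and the argument goes through verbatim.
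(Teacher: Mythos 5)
Your argument is correct and rests on the same core mechanism as the paper's proof (the Codazzi identity for $\rho$ evaluated on eigenvectors, following Derdzi\'nski), but you execute it invariantly rather than through the table of Christoffel symbols $\Gamma_{ijk}$ that the paper compiles from its relation $\varepsilon_j\delta_{jk}E_i(\lambda_k)+(\lambda_j-\lambda_k)\Gamma_{ijk}=\varepsilon_i\delta_{ik}E_j(\lambda_k)+(\lambda_i-\lambda_k)\Gamma_{jik}$. The one genuine difference is in the umbilicity step: the paper reads the normal ($E_1$) component of $\nabla_{E_i}E_i$ off the weighted Einstein equation via $g(\nabla_{E_i}E_i,E_1)=-\varepsilon_i\beta_i$ and uses Codazzi only to kill the remaining normal components, whereas you obtain the full umbilic form $g(\nabla_XY,Z)=\tfrac{Z(\lambda)}{\lambda-\mu}\,g(X,Y)$ purely from the total symmetry of $\nabla\rho$ via the clean identity $(\nabla_Z\rho)(X,Y)=Z(\lambda)\,g(X,Y)$ (which is exact, not merely ``modulo eigencomponents'': $\rho(\nabla_ZX,Y)=g(\nabla_ZX,\Ric Y)=\lambda g(\nabla_ZX,Y)$ by self-adjointness, so the cancellation is on the nose). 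Your version is slightly more general --- it is the pure Codazzi-tensor statement, with the weighted equation entering only afterwards to identify the umbilicity factor --- while the paper's version simultaneously produces the explicit connection relations it needs later. Two small points: your displayed coefficient has the wrong sign, since $(\nabla_X\rho)(Y,Z)=(\lambda-\mu)g(\nabla_XY,Z)$ gives $g(\nabla_XY,Z)=\tfrac{Z(\lambda)}{\lambda-\mu}g(X,Y)$, not $\tfrac{Z(\lambda)}{\mu-\lambda}g(X,Y)$ (irrelevant for umbilicity, but it matters if you later match this against $-\varepsilon_1\beta_i$); and you should state explicitly that in the non-isotropic Type I.a case the eigendistributions are mutually orthogonal and nondegenerate, which is what legitimizes both the decomposition $[X,Y]=\sum_\nu([X,Y])_\nu$ and the dualization of $Z\mapsto Z(\lambda)/(\lambda-\mu)$ into a mean curvature vector.
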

\begin{proof}
We work in an adapted local pseudo-orthonormal frame $\mathcal{B}_1=\{E_1,\dots,E_n\}$ that diagonalizes the Ricci operator and such that $g(E_i,E_i)=\varepsilon_i$.
Denote by $\Gamma_{ijk}=g(\nabla_{E_i}E_j,E_k)$ the corresponding Christoffel symbols. First, note that, $\Gamma_{ijk}=-\Gamma_{ikj}$ for all $i,j,k$. We calculate the covariant derivative of the Ricci tensor,
\[
\begin{array}{rcl}
	(\nabla_{E_i}\rho)(E_j,E_k)&=&E_i(\rho(E_j,E_k))-\rho(\nabla_{E_i}E_j, E_k)-\rho(E_j,\nabla_{E_i}E_k) \\
	\noalign{\medskip}
	&=&\varepsilon_j\delta_{jk}E_i(\lambda_j)+(\lambda_j-\lambda_k)\Gamma_{ijk}.
\end{array}
\]
Now, since $\rho$ is Codazzi, we have
\begin{equation}\label{eq:rel-eigen}
	\varepsilon_j\delta_{jk}E_i(\lambda_k)+(\lambda_j-\lambda_k)\Gamma_{ijk}=\varepsilon_{i}\delta_{ik}E_j(\lambda_k)+(\lambda_i-\lambda_k)\Gamma_{jik}.
\end{equation}
From here, choosing $i,j>1$, $i\neq j$, we obtain
\[
	(\lambda_i-\lambda_j)\Gamma_{1ij}=(\lambda_1-\lambda_j)g(\nabla_{E_i}E_1,E_j)=(\lambda_1-\lambda_j)\beta_ig(E_i,E_j)=0.
\]
Hence, for every $i,j$ such that $\lambda_i\neq \lambda_j$, $\Gamma_{1ij}=0$. Furthermore, since $\nabla_{E_1}E_1=0$, we have $\Gamma_{1i1}=0$ and it is clear that $\Gamma_{1ii}=0$ since the adapted frame is pseudo-orthonormal. It follows that $\nabla_{E_1}E_i$ stays in the eigenspace associated to the eigenvalue $\lambda_i$, while being orthogonal to $E_i$. Similarly, $\Gamma_{ii1}=-\varepsilon_i\beta_i$ 
 and, by \eqref{eq:rel-eigen}, $(\lambda_j-\lambda_i)\Gamma_{iji}=0$ if $i,j>1$. Thus, $\Gamma_{iji}=0$ if $\lambda_i\neq \lambda_j$ and $\Gamma_{iii}=0$, so the component of $\nabla_{E_i}E_i$ that is perpendicular to $E_1$ also stays in the eigenspace associated with $\lambda_i$, while being orthogonal to $E_i$.

For the rest of the connection coefficients $\Gamma_{ijk}$, with $i,j,k>1$, we use \eqref{eq:rel-eigen} to write $(\lambda_j-\lambda_k)\Gamma_{ijk}=(\lambda_i-\lambda_k)\Gamma_{jik}$. It follows that, if $\lambda_i=\lambda_k\neq \lambda_j$, $-\Gamma_{ikj}=\Gamma_{ijk}=0$. In other words, if $E_i\neq E_k$ belong to the same eigenspace, then $\nabla_{E_i}E_k$ stays in it. In summary, let $E_i,E_j$ be vectors in the same eigenspace (we denote the set of indices corresponding to eigenvectors in the eigenspace associated to $\lambda_i$ by $[i]$), and $E_\mu$ so that $\lambda_\mu\neq \lambda_i$. Then, in general, the connection relations read
\begin{equation}\label{eq:connection-coefs}
\begin{array}{rcl}
 	\nabla_{E_1}E_1&=&0, \quad \nabla_{E_i}E_1=\beta_iE_i, \quad \nabla_{E_1}E_i=\sum_{k\in[i]}\varepsilon_k \Gamma_{1ik} E_k, \\
		\noalign{\medskip}
		\nabla_{E_i}E_j&=&-\varepsilon_1\varepsilon_i\beta_i \delta_{ij}E_1+\sum_{k\in [i]}\varepsilon_k\Gamma_{ijk} E_k,	\\
		\noalign{\medskip}
		\nabla_{E_i}E_\mu&=&\sum_{k\notin [i], k\neq 1,\mu}\varepsilon_k\Gamma_{i\mu k}E_k.
		\end{array}
\end{equation}
In particular, for two vectors in the same eigenspace, $[E_i,E_j]=\sum_{k\in [i]}\varepsilon_k(\Gamma_{ijk}-\Gamma_{jik}) E_k$, and so the distribution generated by all eigenvectors associated to $\lambda_i$ is integrable. Moreover, the second fundamental form satisfies $II(E_i,E_j)=-\varepsilon_1\varepsilon_i\beta_i \delta_{ij}E_1$, so the tangent submanifold is umbilical.
\end{proof}

 
From this point on, we focus on 4-dimensional solutions to attain the classification in Theorem~\ref{th:classification-diagonalizable}~(2). Once we are working around a point in $M_{\Ric}$, we perform specific analyses depending on whether the eigenvalues $\lambda_2$, $\lambda_3$ and $\lambda_4$ are all different; or at least two of them coincide. As it turns out, the first case is not admissible, independently of the causal character of $\nh$.

\subsection{The three eigenvalues coincide: $\lambda_2= \lambda_3= \lambda_4$}
If $\lambda_2= \lambda_3= \lambda_4$ then, from \eqref{eq:connection-coefs}, the connection behaves as follows
\[
	\begin{array}{rcl}
		\nabla_{E_1}E_1&=&0, \quad \nabla_{E_i}E_1=\beta_iE_i, \quad \nabla_{E_1}E_i=\sum_{k\neq i}\varepsilon_k\Gamma_{1ik} E_k \\
		\noalign{\medskip}
		\nabla_{E_i}E_j&=&-\varepsilon_1\varepsilon_i\beta_i \delta_{ij}E_1+\sum_{k\neq i}\varepsilon_k\Gamma_{ijk} E_k,
	\end{array}
\]
where $i,j,k\in \{2,3,4\}$, $i\neq k$. We consider the distribution $\operatorname{span}\{E_2,E_3,E_4\}$, which is integrable and whose tangent leaves are umbilical (see Lemma~\ref{lemma:eigen-integrable}), with unit normal $E_1$. Moreover, notice that $\nabla_{E_i} E_1\perp E_1$, so these leaves are indeed spherical. Hence the metric decomposes as a warped product $I\times_f N$ (see \cite{Ponge-Twisted}). Moreover, since the Ricci eigenvalues $\lambda_i$ are equal for $i=2,3,4$, $N$ is Einstein, hence of constant sectional curvature. This implies that $I\times_f N$ is locally conformally flat (see \cite{Brozos-Loc-Conf-Flat}), so these solutions were already described in Theorem~\ref{th:loc-conf-flat-ndim}~(1), but not in Theorem~\ref{th:classification-diagonalizable}.

\subsection{Two eigenvalues coincide: $\lambda_2\neq \lambda_3= \lambda_4$ or $\lambda_2=\lambda_3\neq \lambda_4$.}

In order to fix notation, we adapt the pseudo-orthonormal basis $\{E_1=\nh/|\nh|, E_2,E_3,E_4\}$ so that $\lambda_2\neq \lambda_3= \lambda_4$ and we allow $g(E_i,E_i)=\varepsilon_i$ for $i=1,2,3$, so the unit timelike vector field could be $E_1$, $E_2$ or $E_3$.
In this context, the geometry of the manifold is so restricted that it decomposes as a multiply warped product.

\begin{lemma}\label{lemma:local-multi-warped}
Let $(M,g,h)$ be a 4-dimensional non-isotropic solution of the weighted Einstein field equations \eqref{eq:vacuum-Einstein-field-equations} with harmonic curvature tensor, such that the Ricci operator diagonalizes in the adapted local frame $\mathcal{B}_1=\{E_1,\dots,E_4\}$. If there are two distinct eigenvalues $\lambda_2\neq \lambda_3=\lambda_4$, then $(M,g)=I_1\times_\varphi I_2\times_\xi N$ is a multiply warped product manifold with metric 
\begin{equation}\label{eq:multiply-warped-decomposition}
	g=\varepsilon_1 dt^2+\varepsilon_2 \varphi(t)^2 ds^2+\xi(t)^2\tilde{g},
\end{equation}
where $\tilde{g}$ is the metric of a Riemannian or Lorentzian surface of constant Gauss curvature $\kappa$ and $h$ is a function on $t$.
\end{lemma}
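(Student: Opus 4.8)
The plan is to build on the structural results already established for diagonalizable solutions with harmonic curvature. Since we are working around a point of $M_{\Ric}$ with eigenvalue configuration $\lambda_2\neq\lambda_3=\lambda_4$, Lemma~\ref{lemma:eigen-integrable} already tells us that the eigendistributions are integrable with totally umbilical leaves, and Lemma~\ref{lemma:coordinate-s-eigenvalues} tells us all $\lambda_i$ are functions of the coordinate $t$ alone (with $\nabla t = \nh/|\nh| = E_1$). The connection relations \eqref{eq:connection-coefs} specialize: with $[3]=[4]=\{3,4\}$, the only surviving Christoffel symbols are $\beta_i$ (from $\nabla_{E_i}E_1=\beta_iE_i$), the symbols $\Gamma_{1ij}$ on pairs in the same eigenspace, and $\Gamma_{ijk}$ with all indices in $\{3,4\}$. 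So the first task is to show that the $2$-plane distribution $\operatorname{span}\{E_3,E_4\}$ and the line fields $\operatorname{span}\{E_1\}$ and $\operatorname{span}\{E_2\}$ are each integrable with spherical leaves, so that the Ponge--R\'omani criterion (quoted from \cite{Ponge-Twisted} in Section~\ref{subsec:warped-metrics}) forces the multiply warped decomposition.

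First I would pin down the remaining connection coefficients. The line field $\operatorname{span}\{E_1\}$ is totally geodesic ($\nabla_{E_1}E_1=0$), and since $\beta_i = h(\lambda_i-2J)/|\nh|$ depends only on $t$, the mean curvature of the $\{E_2\}$-leaves and of the $\{E_3,E_4\}$-leaves is constant along those leaves — this is exactly the sphericity condition. The subtler point is the Christoffel symbols $\Gamma_{1ij}$ within an eigenspace, i.e. whether $\nabla_{E_1}$ preserves the frame up to rotation inside $\operatorname{span}\{E_3,E_4\}$; these need not vanish but can be absorbed by an $s$- and fiber-coordinate-dependent rotation of $E_3,E_4$, or simply do not obstruct the warped structure since they are tangential rotations. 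I would then invoke: $\operatorname{span}\{E_2,E_3,E_4\}=E_1^\perp$ is integrable (it is the orthogonal complement of a geodesic line field whose integral curves are unit-speed, equivalently $dt$ closed, shown already before \eqref{eq:decomposition}), giving the first splitting $M = I_1\times N'$ with $g = \varepsilon_1 dt^2 + g^{N'}[t]$; and then within $N'$, the distributions $\operatorname{span}\{E_2\}$ and $\operatorname{span}\{E_3,E_4\}$ are orthogonal, integrable, and spherical with warping depending only on $t$, which upgrades $N'$ to $I_2\times_{\varphi/\xi} N$ and the whole thing to the multiply warped form $g=\varepsilon_1 dt^2 + \varepsilon_2\varphi(t)^2 ds^2 + \xi(t)^2\tilde g$ as in \eqref{eq:multiply-warped-decomposition}.

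Next I would identify the fiber $N$ and its curvature. Since $\lambda_3=\lambda_4$, the $2$-dimensional fiber $(N,\tilde g)$ with the induced (rescaled) metric is Einstein, hence — being a surface — of constant Gauss curvature $\kappa$; here I would compute $\rho(E_3,E_3)$ from the multiply warped Ricci formulas (referenced via \cite{Dobarro-Unal}) and match it against $\lambda_3=\lambda_4$, using that the base contributions ($\varphi,\xi$ and their derivatives) depend only on $t$, to see that $\kappa$ is forced to be a genuine constant. Finally, $h=h(t)$ is immediate: we already have $h=h(t)$ from the paragraph preceding \eqref{eq:decomposition} (the coordinate $t$ was chosen so that $dt=dh/|\nh|$), and this is unaffected by the further splitting of the fiber directions.

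The main obstacle I expect is the careful bookkeeping needed to rule out any $t$-dependence being "hidden" inside the fiber or the $s$-direction — that is, confirming rigorously that the base of each warping is exactly the $t$-line and not a higher-dimensional base, and that the warping functions $\varphi,\xi$ genuinely depend on $t$ only. This is handled by combining the sphericity of the leaves (mean curvature vectors point along $E_1$ and have $t$-only length, from $\beta_i$ depending only on $t$ by Lemma~\ref{lemma:coordinate-s-eigenvalues}) with the Ponge--R\'omani characterization, but one must verify the two splittings are compatible (the $\{E_3,E_4\}$-foliation is spherical \emph{as a foliation of $N'$}, not merely of $M$), which requires checking that $\nabla_{E_2}E_3, \nabla_{E_2}E_4$ have no $E_2$ or $E_1$ components beyond what the umbilicity already gives — and this follows from the vanishing Christoffel symbols $\Gamma_{2 3 2}=\Gamma_{2 4 2}=0$ (since $\lambda_2\neq\lambda_3$) established from \eqref{eq:rel-eigen}. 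A secondary, purely routine, point is extracting the constancy of $\kappa$ from the Ricci expressions, which I would only sketch rather than carry out in full.
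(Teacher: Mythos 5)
Your proposal follows essentially the same route as the paper's proof: specialize the connection relations \eqref{eq:connection-coefs} to the configuration $\lambda_2\neq\lambda_3=\lambda_4$, verify that $\operatorname{span}\{E_3,E_4\}$ and $\operatorname{span}\{E_2\}$ are spherical while the complementary distributions are totally geodesic (using that the $\beta_i$ depend only on $t$ by Lemma~\ref{lemma:coordinate-s-eigenvalues}), invoke the criterion of \cite{Ponge-Twisted} to obtain the multiply warped splitting with $h=h(t)$, and deduce constancy of the fiber's Gauss curvature from the $t$-only dependence of $\lambda_3=\lambda_4$. The only caution is that ``Einstein'' is vacuous for a surface and ``mean curvature constant along the leaves'' is strictly weaker than sphericity (which requires the mean curvature vector to be parallel in the normal bundle), but you supply the correct supporting computations for both points, so these are phrasing issues rather than gaps.
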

\begin{proof}
We adapt the relations in \eqref{eq:connection-coefs} to this context to see that
\[
	\begin{array}{l}
		\nabla_{E_1}E_1=0, \; \nabla_{E_i}E_1=\beta_iE_i,\; \nabla_{E_1}E_2=0,\; \nabla_{E_1}E_3=\Gamma_{134} E_4,\;\nabla_{E_1}E_4=\varepsilon_3\Gamma_{143} E_3, \\
		\noalign{\medskip}
		\nabla_{E_2}E_2=-\varepsilon_1\varepsilon_2\beta_2E_1,\;  \nabla_{E_2}E_3=\Gamma_{23 4}E_4,\; \nabla_{E_2}E_4=\varepsilon_3\Gamma_{24 3}E_3 \\
		\noalign{\medskip}
		 \nabla_{E_3}E_3=-\varepsilon_1\varepsilon_3\beta_3 E_1+\Gamma_{334} E_4,\;
		\nabla_{E_4}E_4=-\varepsilon_1\beta_4 E_1+\varepsilon_3\Gamma_{443} E_3,\;\\
		\noalign{\medskip}
		\nabla_{E_3}E_4=\varepsilon_3\Gamma_{343} E_3,\;\nabla_{E_4}E_3=\Gamma_{434} E_4,\;
		\nabla_{E_3}E_2=\nabla_{E_4}E_2=0.
	\end{array}
\]
Notice that the tangent submanifolds to the distributions $\mathcal{D}_1=\spanned\{E_1,E_2\}$ and $\mathcal{D}_2=\spanned\{E_1,E_3,E_4\}$ are totally geodesic, since $\nabla \mathcal{D}_1\subset \mathcal{D}_1$ and $\nabla \mathcal{D}_2\subset \mathcal{D}_2$. We already know, by Lemma~\ref{lemma:eigen-integrable}, that leaves tangent to $\mathcal{D}_3=\spanned\{E_3,E_4\}$ are umbilical but, moreover, $\nabla_{E_i} E_1\perp E_1, E_2$  for $i\in\{3,4\}$, so they are indeed spherical. Since leaves tangent to $\spanned\{E_2\}$ are also spherical and we already have the decomposition in \eqref{eq:decomposition}, the manifold decomposes as a multiply warped product as in \eqref{eq:multiply-warped-decomposition} and, moreover, $h$ depends only on $t$.

Since $\lambda_3$ and $\lambda_4$ depend only on $t$ by Lemma~\ref{lemma:caseII-coordinate-s-eigenvalues}, the Ricci tensor on the fiber $\tilde g$ has constant eigenvalues and hence it is of constant Gauss curvature. 
\end{proof}

\begin{lemma}\label{lemma:mult-warping-constant}
	Let $(M,g,h)$ be a multiply warped product solution as in \eqref{eq:multiply-warped-decomposition} with $h=h(t)$ and harmonic curvature. Then one of the warping functions, either $\varphi$ or $\xi$, is constant.
\end{lemma}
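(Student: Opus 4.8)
\textbf{Proof plan for Lemma~\ref{lemma:mult-warping-constant}.}
The plan is to exploit the harmonic curvature condition through the Codazzi equation for the Ricci tensor once more, this time extracting information that relates the two warping functions $\varphi$ and $\xi$ rather than the connection coefficients within an eigenspace. First I would write down the Ricci tensor of the multiply warped product \eqref{eq:multiply-warped-decomposition} using the formulas for $\rho$ in terms of $\varphi$, $\xi$ and $\kappa$ (these are recalled in Section~\ref{subsec:warped-metrics}, following \cite{Dobarro-Unal}); in particular the eigenvalue $\lambda_2$ attached to $\partial_s$ and the doubly-repeated eigenvalue $\lambda_3=\lambda_4$ attached to the surface factor $N$ become explicit rational expressions in $\varphi,\varphi',\varphi''$ and $\xi,\xi',\xi''$. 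Since the curvature is harmonic, the Ricci tensor is Codazzi, so for $i,j\in\{2,3\}$ with $i\neq j$ the relation $(\nabla_{E_i}\rho)(E_j,E_1)=(\nabla_{E_1}\rho)(E_i,E_j)$ — equivalently $(\lambda_1-\lambda_j)g(\nabla_{E_i}E_1,E_j)+(\lambda_j-\lambda_i)\Gamma_{ij1}$-type identities, already exploited in Lemma~\ref{lemma:eigen-integrable} — together with the explicit dependence of $\beta_2=\frac{h(\lambda_2-2J)}{|\nh|}$ and $\beta_3=\beta_4=\frac{h(\lambda_3-2J)}{|\nh|}$ on $t$ yields a differential identity purely in $t$.

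The key computation is the following. From \eqref{eq:connection-warped-product} applied to each factor, $\beta_2=\varepsilon_1\frac{\varphi'}{\varphi}$ and $\beta_3=\varepsilon_1\frac{\xi'}{\xi}$ (up to the sign conventions fixed in Lemma~\ref{lemma:local-multi-warped}), so the vacuum equation \eqref{eq:vacuum-Einstein-field-equations-2} forces $h(\lambda_2-2J)=\varepsilon_1|\nh|\frac{\varphi'}{\varphi}$ and $h(\lambda_3-2J)=\varepsilon_1|\nh|\frac{\xi'}{\xi}$, while $|\nh|=|h'|$ is also a function of $t$ alone. Subtracting, $h(\lambda_2-\lambda_3)=\varepsilon_1 h'\left(\frac{\varphi'}{\varphi}-\frac{\xi'}{\xi}\right)$. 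On the other hand, the Codazzi condition applied to the index triple $(3,4,3)$ or $(2,3,2)$ — using the connection relations displayed in the proof of Lemma~\ref{lemma:local-multi-warped} and the fact, from Lemma~\ref{lemma:caseII-coordinate-s-eigenvalues}, that all eigenvalues are functions of $t$ — reduces to an equation of the form $(\lambda_2-\lambda_3)\beta_3 = (\lambda_2-\lambda_3)\beta_2$ is vacuous, so instead I would use the triple $(3,2,2)$ or the curvature component $R(E_2,E_3,E_3,E_2)$ read two ways: once from the multiply-warped Ricci/curvature formulas and once from the Codazzi-constrained connection. Concretely, harmonicity gives $(\nabla_{E_2}\rho)(E_3,E_3)=(\nabla_{E_3}\rho)(E_2,E_3)$, and since $\rho$ is diagonal with $t$-dependent eigenvalues and $\nabla_{E_2}E_3=\Gamma_{234}E_4$, $\nabla_{E_3}E_2=0$, the right side vanishes while the left side equals $\varepsilon_3 E_2(\lambda_3)-2(\lambda_3-\lambda_3)\Gamma_{233}=0$ automatically; the genuine constraint instead comes from $(\nabla_{E_3}\rho)(E_3,E_1)=(\nabla_{E_1}\rho)(E_3,E_3)$, giving $\varepsilon_3(\lambda_1-\lambda_3)\beta_3 = \varepsilon_3\nabla h(\lambda_3)/|\nh|$ — already known — \emph{and} the mixed identity $(\nabla_{E_2}\rho)(E_2,E_1)=(\nabla_{E_1}\rho)(E_2,E_2)$ giving the analogue for $\lambda_2$. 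The decisive extra equation is obtained by computing the scalar curvature $\tau$ of the multiply warped product as a function of $t$, differentiating, and using $\tau=\text{const}$ (Lemma~\ref{le:const-sc}): this yields a second-order ODE linking $\varphi,\xi,\kappa$ which, when combined with $h=h(t)$ and the two first-order relations above, factors through a term $(\varphi'\xi-\varphi\xi')\cdot(\text{something})=0$.

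So the strategy is: (i) express $\lambda_2,\lambda_3,\tau$ explicitly via the $\varphi,\xi,\kappa$ formulas; (ii) feed these into the vacuum equation to get $h$ and $|\nh|$ in terms of $\varphi,\xi$; (iii) impose $\tau=\text{const}$ and the remaining nontrivial Codazzi identities to obtain a polynomial relation whose vanishing forces a factor of the form $\varphi'\xi-\varphi\xi'=0$ or $\varphi'=0$ or $\xi'=0$; (iv) observe that $\varphi'\xi-\varphi\xi'=0$ means $\varphi/\xi$ is constant, but then $\lambda_2=\lambda_3$ (the two eigenvalues coincide because $\beta_2=\beta_3$), contradicting $\lambda_2\neq\lambda_3$; hence $\varphi'=0$ or $\xi'=0$, i.e. one warping function is constant. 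I expect the main obstacle to be step (iii): keeping track of the several Codazzi/differentiated-scalar-curvature relations without them collapsing into tautologies, and isolating the one genuinely new scalar ODE. In practice this is where a careful bookkeeping of which index triples in \eqref{eq:rel-eigen} survive (those mixing the $E_2$-direction with the $N$-directions and the $E_1$-direction) is essential; once that ODE is in hand, the factorization and the contradiction in step (iv) are routine.
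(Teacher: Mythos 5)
Your overall strategy is the right one and matches the paper's in spirit: the genuinely new information does come from the radial component of the harmonicity condition (the component of \eqref{eq:Rnf} mixing $\partial_t$ with a fiber direction, which is exactly $-h\,dP(\partial_{x_1},\partial_t,\partial_{x_1})=0$), from the constancy of $\tau$, and from the components of $G^h=0$ written in the warped-product variables; and your final case analysis (a factor $\varphi'\xi-\varphi\xi'$ forcing $\lambda_2=\lambda_3$ via $\beta_2=\beta_3$ and \eqref{eq:vacuum-Einstein-field-equations-2}, versus $\varphi'=0$ or $\xi'=0$) is essentially how the degenerate branches are disposed of — though note that in the paper the branch $f_1'=f_2'$ is closed by showing $\kappa=0$, hence local conformal flatness, rather than by invoking $\lambda_2\neq\lambda_3$.

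The gap is in your step (iii), and it is not merely bookkeeping: you assert, but do not establish, that the system you assemble "factors through a term $(\varphi'\xi-\varphi\xi')\cdot(\text{something})=0$." The equations you list (the radial Cotton component, $\tau=\mathrm{const}$, and the identities $h(\lambda_i-2J)=|\nh|\beta_i$) do not by themselves produce such a factorization; as you yourself observe, the Codazzi identities you enumerate are tautological for a multiply warped product with $t$-dependent data. The missing mechanism in the paper's argument is twofold. First, after using the radial $dP=0$ equation and $\tau=\mathrm{const}$ to solve for $f_1''$ and $f_2''$ (with $f_1=\log\varphi$, $f_2=\log\xi$), and $G^h(\partial_{x_1},\partial_{x_1})=0$ to solve for $h''$, the two remaining components $G^h(\partial_t,\partial_t)=0$ and $G^h(\partial_{x_3},\partial_{x_3})=0$ form a \emph{homogeneous linear system in $(h',h)$} with a nontrivial solution (since $h>0$), so its determinant
\[
C_1= e^{2 f_2}\left(f_1'-f_2'\right)\left(3 f_1' f_2'+\tau\varepsilon_1\right)-3\kappa\varepsilon_1 f_1'
\]
must vanish. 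Second, one must \emph{differentiate} $C_1=0$ along $t$ and reduce modulo the second-order relations; only then does a factor $f_1'$ split off, and a further combination $C_1+C_2=-2e^{2f_2}f_2'(f_1'-f_2')(f_1'+2f_2')$ yields the case analysis. Your proposal contains neither the rank-degeneracy observation nor the differentiation step, and it also omits the third branch $f_1'=-2f_2'$ (i.e. $\varphi\xi^2=\mathrm{const}$), which is not of the form $\varphi'\xi-\varphi\xi'=0$ and must be killed separately by substituting back into $G^h(\partial_t,\partial_t)=0$. As written, the proposal is a plausible plan rather than a proof: the decisive computation that makes the factorization happen is missing.
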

\begin{proof}
Let $g$ be a multiply warped product metric as in \eqref{eq:multiply-warped-decomposition}. Let $\kappa$ be the Gauss curvature of $\tilde g$, we can choose local  coordinates $(x_2,x_3)$ on $N$ so that \[\tilde g(x_2,x_3)=\frac{1}{\left(1+\frac{\kappa}4 \left(\varepsilon_3 x_2^2+x_3^2\right)\right)^2} (\varepsilon_3 dx_2^2+dx_3^2).\] 
 Using these coordinates and introducing exponentials on the warping functions to simplify expressions, we set local coordinates $(t,x_1,x_2,x_3)$ on $M$ such that 
\begin{equation}\label{eq:local-coord-mult-warped}
	g=\varepsilon_1 dt^2+\varepsilon_2 e^{2 f_1(t)}dx_1^2 + e^{2 f_2(t)} \tilde g.
\end{equation}	
Now, from \eqref{eq:Rnf} we see that
\[
	\begin{array}{l}
	 0 =	R(\nabla h,\partial_{x_1},\partial_t,\partial_{x_1})-(\rho-2Jg)\wedge dh (\partial_{x_1},\partial_t,\partial_{x_1})=\\
		\noalign{\medskip}
		\,\;\qquad \frac{2 \varepsilon _1\varepsilon _2 e^{2 f_1} h' }{3}  \left(-\kappa 
		\varepsilon _1 e^{-2 f_2}-2 f_1'{}^2-f_1' f_2'+3
		f_2'{}^2-2 f_1''+2 f_2''\right),
	\end{array}
\]
from where
\[
e^{2 f_2} \left(2 \left(f_1'\right){}^2+f_1' f_2' -3 \left(f_2'\right){}^2+2 f_1''-2 f_2''\right)+\kappa  \varepsilon _1=0.
\] 
A direct computation also shows that
\[
\tau=-2 \varepsilon _1 \left(- \kappa  \varepsilon _1e^{-2 f_2}+\left(f_1'\right){}^2+2 f_1' f_2'+3 \left(f_2'\right){}^2+f_1''+2 f_2''\right).
\]
Using these two expressions we obtain that $f_1''=-\frac{1}{6} \left(6 \left(f_1'\right){}^2+6 f_1' f_2'+\tau  \varepsilon_1\right)$ and $f_2''=\frac{1}{2} e^{-2 f_2} \kappa  \varepsilon _1-\frac{3}{2} \left(f_2'\right){}^2-\frac{1}{2} f_1' f_2'-\frac{\tau  \varepsilon _1}{6}$.

Moreover, we compute
\[
G^h(\partial_{x_1},\partial_{x_1})=\frac{1}{6} e^{2 f_1} \varepsilon _1 \varepsilon _2 \left(6 \left(2 f_2' h'+h''\right)+h \left(\tau  \varepsilon _1-6 f_1' f_2'\right)\right)
\]
to get that $h''=\frac{1}{6} \left(-12 f_2' h'+6 h f_1' f_2'-h \tau  \varepsilon _1\right)$. Thus, we have expressed the second derivatives of $h$, $f_1$ and $f_2$ in terms of lower order terms.
Now, we use these relations to compute
\[
\begin{array}{rcl}
G^h(\partial_t,\partial_t)&=& \left(f_1'+2 f_2'\right) h'+ \left(- \kappa  \varepsilon _1e^{-2 f_2}+\left(f_2'\right){}^2+2 f_1' f_2'+\frac{\tau  \varepsilon _1}{2}\right)h,\\
\noalign{\medskip}
G^h(\partial_{x_3},\partial_{x_3})&=& \frac{8\varepsilon_1e^{2 f_2}}{\left(\kappa  x_2^2 \varepsilon_3+
							\kappa  x_3^2+4\right){}^2}\left( \left( f_1' f_2'- \left(f_2'\right){}^2+\kappa 
\varepsilon _1e^{-2 f_2}\right)h+2  \left(f_1'-f_2'\right) h'  \right).
\end{array}
\]
This leads to a homogeneous linear system of two equations in the unknowns $h'$ and $h$. Hence, the rank of the associated matrix is zero:
\begin{equation}\label{eq:C1_rank_matrix}
C_1= e^{2 f_2} \left(f_1'-f_2'\right) \left(3 f_1' f_2'+\tau  \varepsilon _1\right)-3 \kappa  \varepsilon _1 f_1'=0.
\end{equation}
Differentiating with respect to $t$ and simplifying second order terms, we get 
\[
 f_1' \left(3 \kappa  \varepsilon _1 f_1'-e^{2 f_2} \left(f_1'-f_2'\right) \left(4 \left(f_2'\right){}^2+5 f_1' f_2'+\tau  \varepsilon
_1\right)\right)=0.
\]
Thus, either $f_1'=0$, in which case the lemma holds, or \[C_2=3\kappa \varepsilon_1 f_1'-e^{2 f_2} \left(f_1'-f_2'\right) \left(4 \left(f_2'\right){}^2+5 f_1' f_2'+\tau  \varepsilon_1\right) =0.\] 
If $C_2=0$, then we compute
\[
0=C_1+C_2=-2 e^{2 f_2} f_2' \left(f_1'-f_2'\right) \left(f_1'+2 f_2'\right).
\]
Hence, there are three possible cases. If $f_2'=0$, the result follows. If $f_1'=f_2'$, then by \eqref{eq:C1_rank_matrix}, we have either $f_1'=0$, so the result follows, or $\kappa=0$, in which case the manifold is locally conformally flat. Finally, if $f_1'=-2 f_2'$, then $f_1(t)=C-2f_2(t)$ for a constant $C$, and $G^h(\partial_t,\partial_t)=-6 h (f_2')^2$, so $f_2'=0$.
\end{proof}

Lemma~\ref{lemma:mult-warping-constant} above shows that at least one of the warping functions is constant. Notice that if both are constant, then we have a direct product. In this case, a direct computation of the equation $G^h=0$ shows that, necessarily, $\kappa=0$, so the manifold is flat.
As a result, we can restrict our analysis of the multiply warped product solutions to the case where one of the warping functions is constant and the other is strictly non-constant. We first analyze the case $\xi'\neq 0$.

\subsection*{Case $\varphi=\text{constant}$}

\begin{lemma}\label{lemma:local-multi-warped-sol-1}
Let $(M,g,h)$ be a multiply warped product solution as in \eqref{eq:multiply-warped-decomposition} with  $\varphi$ constant, with harmonic curvature and $h=h(t)$. Then, $(M,g,h)$ is a solution of \eqref{eq:vacuum-Einstein-field-equations} if and only if $\tau=0$, $h=c \xi'$ and $I_1 \times_\xi N$ is one of the 3-dimensional locally conformally flat solutions portrayed in Example~\ref{ex:3-dim-loc-conf-flat-sols} for the density function $h$.
\end{lemma}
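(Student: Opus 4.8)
The plan is to specialize the multiply warped product to the case $\varphi=\text{constant}$ (so that $f_1'=0$ in the coordinates \eqref{eq:local-coord-mult-warped}) and read off what the weighted Einstein field equations impose. Setting $f_1'=0$, the auxiliary identities derived in the proof of Lemma~\ref{lemma:mult-warping-constant} collapse: the relation coming from $R(\nabla h,\partial_{x_1},\partial_t,\partial_{x_1})=0$ becomes an expression in $f_2$ alone, and likewise the formula for $\tau$ simplifies. I would use these to get $f_2''$ in terms of $f_2'$ and $\tau$, and the expression for $h''$ from $G^h(\partial_{x_1},\partial_{x_1})=0$ reduces to $h''=-2f_2'h'-\tfrac{1}{6}h\tau\varepsilon_1$. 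The vanishing of $C_1$ in \eqref{eq:C1_rank_matrix} with $f_1'=0$ forces $e^{2f_2}f_2'\tau\varepsilon_1=0$; since $\xi=e^{f_2}$ is strictly non-constant ($f_2'\neq 0$) on the relevant open set, this immediately gives $\tau=0$. This is the first key step and it is essentially algebraic once the reductions are in place.

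**Second**, with $\tau=0$ I would extract the density. From $G^h(\partial_t,\partial_t)=0$ (now with $f_1'=0$, $\tau=0$) one gets $2f_2'h'+(f_2'^2-\kappa\varepsilon_1 e^{-2f_2})h=0$, while the simplified $f_2$-equation reads (something equivalent to) $2f_2''+ f_2'^2+\kappa\varepsilon_1 e^{-2f_2}=0$, i.e. $e^{-f_2}$ satisfies a linear second-order ODE. Comparing, one checks that $h=c\,\xi'=c\,f_2'e^{f_2}$ solves the $G^h(\partial_t,\partial_t)=0$ equation: differentiating $h=c(e^{f_2})'$ and substituting the formula for $f_2''$ should reproduce exactly the relation above. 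Conversely, if $h$ solves all components of $G^h=0$, subtracting the $\partial_t\partial_t$ and $\partial_{x_3}\partial_{x_3}$ equations (as in Lemma~\ref{lemma:mult-warping-constant}) with $f_1'=0$, $\tau=0$ leaves a first-order linear ODE for $h$ whose solution space is one-dimensional and is spanned by $\xi'$; so $h=c\xi'$ necessarily.

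**Third**, I would identify the $3$-dimensional factor $I_1\times_\xi N$ with the solutions of Example~\ref{ex:3-dim-loc-conf-flat-sols}. Because $\varphi$ is constant, the $I_2$-direction is a flat direct factor and the weighted Einstein equations on $M=I_2\times(I_1\times_\xi N)$ restrict to the weighted Einstein equations on the $3$-dimensional slice $\tilde M=I_1\times_\xi N$, now with $h=h(t)$ defined there and $\tilde\tau=0$ (the scalar curvature of a multiply warped product with one flat factor equals that of the other factor). One then checks that the remaining equations are exactly the system \eqref{eq:weighted-field-ODEs} in dimension $3$ with $\tau=0$, that $\tilde M$ is locally conformally flat in dimension $3$ (automatic once $dP=0$, which holds since $D=0$ here as $W$ contributes nothing transverse — or more directly, any $3$-dimensional $I\times_\xi N$ with $N$ of constant curvature is locally conformally flat), and that the constants $\gamma$, $\kappa$ match those of Example~\ref{ex:3-dim-loc-conf-flat-sols}; the non-flatness assumption rules out $\gamma=0$.

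**The main obstacle** I anticipate is bookkeeping rather than conceptual: one must be careful that the reduction of the ambient $4$-dimensional equations to the $3$-dimensional slice is clean (i.e. that the $G^h(\partial_{x_1},\partial_{x_1})$ component, which is the ``new'' equation not present on $\tilde M$, is automatically satisfied once $\tau=0$ and $h=c\xi'$), and that the $\varphi=\text{const}$ specialization of the identities from Lemma~\ref{lemma:mult-warping-constant} is done consistently with the sign conventions on $\varepsilon_1,\varepsilon_2,\varepsilon_3$. I would organize the proof so that the ``only if'' direction runs through the forced $\tau=0$ and the one-dimensionality of the $h$-ODE, and the ``if'' direction is a direct substitution verifying $G^h=0$ componentwise using the Example~\ref{ex:3-dim-loc-conf-flat-sols} relations $\gamma=\xi^2\xi''$ and $\varepsilon\kappa=(\xi')^2+2\gamma\xi^{-1}$.
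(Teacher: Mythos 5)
Your proposal is correct and follows the same overall strategy as the paper: force $\tau=0$, solve the first-order ODE $G^h(\partial_t,\partial_t)=0$ to get $h=c\xi'$, identify the three-dimensional factor with Example~\ref{ex:3-dim-loc-conf-flat-sols}, and verify the converse by substitution. The one genuine variation is in how $\tau=0$ is obtained: the paper evaluates the identity \eqref{eq:Rnf} on the flat $\partial_s$-direction (where $R(\partial_t,\partial_s,\partial_t,\partial_s)=0$ and $\rho(\partial_s,\partial_s)=0$), whereas you specialize the determinant condition \eqref{eq:C1_rank_matrix} from Lemma~\ref{lemma:mult-warping-constant} to $f_1'=0$, which gives $-\varepsilon_1\tau e^{2f_2}f_2'=0$ and hence $\tau=0$ on the open set where $\xi'\neq 0$, then everywhere by constancy of $\tau$; both routes are valid and rest on the same harmonic-curvature input. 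Two small inaccuracies, neither load-bearing: the reduced $f_2$-equation is $2f_2''+3(f_2')^2-\kappa\varepsilon_1 e^{-2f_2}=0$, not the expression you wrote, and it does not linearize in $e^{-f_2}$; nevertheless your verification that $h=c\,f_2'e^{f_2}$ solves $2f_2'h'+\bigl((f_2')^2-\kappa\varepsilon_1 e^{-2f_2}\bigr)h=0$ does go through, since the bracket produced by the substitution is exactly $2f_2''+3(f_2')^2-\kappa\varepsilon_1 e^{-2f_2}$. Finally, the concern you raise about $G^h(\partial_{x_1},\partial_{x_1})$ resolves affirmatively: given $\tau\equiv 0$ (i.e.\ $2\xi\xi''+(\xi')^2=\varepsilon_1\kappa$) and $h=c\xi'$, that component equals $\varepsilon_1\varepsilon_2 c(\xi\xi'''+2\xi'\xi'')/\xi=\tfrac{\varepsilon_1\varepsilon_2 c}{2\xi}\bigl(2\xi\xi''+(\xi')^2\bigr)'=0$; the paper instead treats it as the second equation of the system \eqref{eq:weighted-field-ODEs}, which amounts to the same thing.
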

\begin{proof}
Consider a multiply warped product structure as in \eqref{eq:multiply-warped-decomposition} with $\varphi$ constant. Normalize the coordinate $s$ if necessary so that $\varphi=1$. 
Because of the metric structure we have $R(\partial_t,\partial_s,\partial_t,\partial_s)=0$ and $\rho(\partial_s,\partial_s)=0$, so we obtain $0=\tau=\frac{2 \kappa  -\varepsilon_1 4 \xi\xi ''-2\varepsilon_1 (\xi ')^2}{ \xi^2}$ from \eqref{eq:Rnf}. This implies that the scalar curvature of $I_1\times_\xi N$ also vanishes.

From equation \eqref{eq:vacuum-Einstein-field-equations}, we have that
\[
G^h(\partial_t,\partial_t)=2\frac{ h' \xi'- h \xi ''}{\xi}, \text{ and } G^h(\partial_s,\partial_s)=\frac{\varepsilon_1\varepsilon_2 \left(\xi h''+2 h' \xi '\right)}{ \xi }.
\]
Hence, on the one hand, solving $h'\xi'-h\xi''=0$ we get $h=c \xi'$ for a constant $c\neq 0$. On the other hand,
since $h'\xi'-h\xi''=0$, we write $0=\varepsilon_1\varepsilon_2 G^h(\partial_s,\partial_s)=h''+2\frac{\xi'h'}{\xi}=h''+2h\frac{\xi''}\xi$  to obtain the system of ODEs \eqref{eq:weighted-field-ODEs} for a locally conformally flat $3$-dimensional solution with vanishing scalar curvature, which are further  described by Example~\ref{ex:3-dim-loc-conf-flat-sols}. 

Conversely, take any $3$-dimensional solution $(I_1\times_\xi N,h)$, with $(N,g^N)$ of constant Gauss curvature $\kappa$, $h=h(t)$, and vanishing scalar curvature. Consider the $4$-dimensional manifold $(M,g)=I_1\times I_2 \times_\xi N$. Then, $\tau=\frac{2 \kappa  -4\varepsilon_1 \xi\xi ''-2\varepsilon_1 (\xi ')^2}{\xi^2}=0$. Moreover,
because the system of ODEs \eqref{eq:weighted-field-ODEs} is satisfied, we have  $h' \xi'- h \xi ''=0$ and $\xi h''+2 h' \xi '=0$, which imply $G^h(\partial_t,\partial_t)=G^h(\partial_s,\partial_s)=0$. Using that $h=c \xi'$ we compute: 
\[
\begin{array}{rcl}
\varepsilon_1  \xi^2 G^h(X,X)&=&\left(\xi  \left(h' \xi '+\xi  h''\right)+h \left(\kappa  \varepsilon _1-\xi  \xi ''-\left(\xi'\right)^2\right)\right)g(X,X)\\
\noalign{\medskip}
&=&c (\kappa  \varepsilon_1  \xi
'-\left(\xi '\right)^3+\xi ^2 \xi ^{(3)})g(X,X)
\end{array}
\]
for  any vector $X$ tangent to $N$. Since $\tau'=-\frac{4 (\kappa  \xi '-\varepsilon _1 \left(\xi '\right)^3+\varepsilon _1\xi ^2 \xi ^{(3)})
	}{\xi ^3}=0$, this term vanishes, so $(M,g,h)$ is a $4$-dimensional solution. Moreover, $\kappa  \xi '-\varepsilon _1 \left(\xi '\right)^3+\xi ^2 \xi ^{(3)}
	\varepsilon _1=0$ is also the necessary and sufficient condition for the manifold to have harmonic curvature. 
\end{proof}

\begin{remark}
	Note that solutions given in Lemma~\ref{lemma:local-multi-warped-sol-1} generically present three distinct eigenvalues for $\Ric$. Indeed, there is a zero eigenvalue corresponding to the $I_2$ factor and the number of eigenvalues reduces to two only if $I_1\times_\xi N$ is Einstein, in which case the underlying manifold is flat.
\end{remark}

\subsection*{Case $\xi=\text{constant}$}

\begin{lemma}\label{lemma:local-multi-warped-sol-2}
Let $(M,g,h)$ be a multiply warped product solution as in \eqref{eq:multiply-warped-decomposition}, with $\xi$ constant, with harmonic curvature and  $h=h(t)$. Then $(M,g)$ is a direct product of two surfaces of constant Gauss curvature and $h=c \varphi'$.
\end{lemma}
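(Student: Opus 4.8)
The plan is to notice that once $\xi$ is constant the multiply warped product \eqref{eq:multiply-warped-decomposition} is \emph{already} a direct product, and then to let the constancy of the scalar curvature (Lemma~\ref{le:const-sc}), together with a single component of the weighted Einstein equation, do the rest. After rescaling the coordinates on $N$ one may assume $\xi\equiv 1$, so that
\[
g=\varepsilon_1\,dt^2+\varepsilon_2\,\varphi(t)^2\,ds^2+\tilde g
\]
is the direct product $\Sigma\times N$, where $\Sigma=\bigl(I_1\times_\varphi I_2,\ \varepsilon_1\,dt^2+\varepsilon_2\,\varphi^2\,ds^2\bigr)$ is a warped-product surface and $(N,\tilde g)$ is a surface of constant Gauss curvature $\kappa$ by Lemma~\ref{lemma:local-multi-warped}; recall that in the case under consideration $\varphi$ is non-constant.

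First I would prove that $\Sigma$ has constant Gauss curvature. For a metric direct product of two surfaces the scalar curvature splits as $\tau=\tau^{\Sigma}+\tau^{N}=2K_{\Sigma}+2\kappa$, where $K_{\Sigma}=-\varepsilon_1\varphi''/\varphi$ is the Gauss curvature of $\Sigma$ (so that $\rho(\partial_t,\partial_t)=-\varphi''/\varphi$ and $\rho|_{N}=\kappa\,\tilde g$). Since $\tau$ is constant by Lemma~\ref{le:const-sc} and $\kappa$ is constant, $K_{\Sigma}$ must be a real constant; equivalently $\varphi''=-\varepsilon_1K_{\Sigma}\,\varphi$. Hence $\Sigma$ is a surface of constant Gauss curvature and $(M,g)=\Sigma\times N$ is a direct product of two surfaces of constant Gauss curvature.

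Next I would identify the density. Since $\varphi$ is a positive, non-constant solution of the constant-coefficient equation $\varphi''=-\varepsilon_1K_{\Sigma}\varphi$, its derivative $\varphi'$ vanishes only at isolated points. Now evaluate $G^h=h\rho-\Hes_h+\Delta h\,g$ on $\partial_t$: as $h=h(t)$ and $\Gamma^t_{tt}=0$ on the product one has $\Hes_h(\partial_t,\partial_t)=h''$, while (since $\tilde g$ does not depend on $t$) the only diagonal Christoffel symbol $\Gamma^t_{aa}$ that contributes to $\Delta h$ besides the $t$-direction is $\Gamma^t_{ss}=-\varepsilon_1\varepsilon_2\,\varphi\varphi'$, so $\Delta h=\varepsilon_1\bigl(h''+\tfrac{\varphi'}{\varphi}h'\bigr)$. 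Using $\rho(\partial_t,\partial_t)=-\varphi''/\varphi$ and $\varepsilon_1^2=1$,
\[
0=G^h(\partial_t,\partial_t)=-\frac{\varphi''}{\varphi}\,h-h''+\Bigl(h''+\frac{\varphi'}{\varphi}h'\Bigr)=\frac{1}{\varphi}\bigl(\varphi'h'-\varphi''h\bigr),
\]
so $(h/\varphi')'=0$ wherever $\varphi'\neq 0$. Hence $h=c\varphi'$ for a constant $c$ on the complement of the isolated zeros of $\varphi'$, and thus on all of $M$ by continuity; moreover $c\neq 0$ since $h>0$. (If desired, combining the trace identity $\Delta h=-\tfrac{h\tau}{3}$ from Lemma~\ref{le:const-sc} with $\tau=2K_{\Sigma}+2\kappa$ and $\varphi''=-\varepsilon_1K_{\Sigma}\varphi$ also gives $K_{\Sigma}=\kappa/2$, in accordance with Theorem~\ref{th:classification-diagonalizable}.)

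I do not expect a serious obstacle here: the computations are routine, and the only steps that need a little care are the identification of the direct-product splitting once $\xi$ has been normalized, and the fact that $\varphi'$ has only isolated zeros (so that ``$(h/\varphi')'=0$ where $\varphi'\neq0$'' really yields $h=c\varphi'$ globally), which uses $\varphi>0$ and non-constant. An alternative route would be to read off $\varphi''/\varphi=\mathrm{const}$ and $\varphi'h'-\varphi''h=0$ directly from the identities established in the proof of Lemma~\ref{lemma:mult-warping-constant} by setting $f_2'=f_2''=0$ there.
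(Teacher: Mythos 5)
Your proof is correct, and it reaches the two conclusions of the lemma by a slightly different route than the paper. The paper evaluates the harmonic-curvature identity \eqref{eq:Rnf} on $(\partial_s,\partial_t,\partial_s)$ to obtain $\kappa\varphi+2\varepsilon_1\varphi''=0$ in one stroke, which simultaneously shows that the $(t,s)$-factor has constant Gauss curvature and pins it down as $\kappa/2$; only then does it use $G^h(\partial_t,\partial_t)=0$ to get $h=c\varphi'$. You instead deduce the constancy of $K_\Sigma=-\varepsilon_1\varphi''/\varphi$ purely from the constancy of the scalar curvature (Lemma~\ref{le:const-sc}) together with the fact that $\kappa$ is constant in the splitting $\tau=2K_\Sigma+2\kappa$ of the direct product, and recover the ratio $K_\Sigma=\kappa/2$ only afterwards from the trace identity $\Delta h=-h\tau/3$. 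Your version is more economical in that the harmonic-curvature hypothesis enters only through the multiply warped decomposition \eqref{eq:multiply-warped-decomposition} inherited from the earlier lemmas, whereas the paper's version delivers the sharper quantitative statement ($K_\Sigma=\kappa/2$, needed for Theorem~\ref{th:classification-diagonalizable}~(2)(b)) within the main line of the argument rather than as an afterthought; since the lemma as stated only asks for \emph{constant} Gauss curvature, either is acceptable. One small remark on your final step: the worry about isolated zeros of $\varphi'$ evaporates, because at any such zero the relation $h=c\varphi'$ on an adjacent interval would force $h=0$ there by continuity, contradicting $h>0$; so $\varphi'$ is in fact nowhere zero and the constant $c$ is automatically global.
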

\begin{proof}
Since the metric  $\xi^2\tilde{g}$ on $N$ has constant Gauss curvature $\frac{\kappa}{\xi^2}$, we can assume $\xi=1$ in \eqref{eq:multiply-warped-decomposition} by a change of coordinates and a redefinition of $\kappa$. Then, the Ricci operator is given by
\[
\Ric(\partial_t)=-\frac{\varepsilon_1\varphi''}{\varphi}\partial_t, \quad Ric(\partial_s)=-\frac{\varepsilon_1\varphi''}{\varphi}\partial_s,\quad \Ric(X)=\kappa X \text{ for } X \text{ tangent to } N.
\]
From \eqref{eq:Rnf}, we have 
\[
	0=R(\nabla h,\partial_s,\partial_t,\partial_s)-(\rho-2Jg)\wedge dh(\partial_s,\partial_t,\partial_s)=- \frac{2}{3}\varepsilon_2 \varphi  h' \left(\kappa    \varphi +2\varepsilon_1 \varphi ''\right).
\]
This implies that $-\frac{\varepsilon_1\varphi''}{\varphi}=\frac{\kappa}2$, so the manifold is a direct product of two surfaces $N_1$ and $N_2$ with constant Gauss curvatures $\frac{\kappa}2$ and $\kappa$ respectively. Moreover,  we compute $0=G^h(\partial_t,\partial_t)=\frac{h'\varphi'-h\varphi''}{\varphi}$, to see that $h=c\varphi'$ for a suitable integration constant $c\in \mathbb{R}^*$.
\end{proof}

\begin{remark} 
 Assume we are in the conditions of Lemma~\ref{lemma:local-multi-warped-sol-2}. Then, we can use the product structure of \eqref{eq:multiply-warped-decomposition} with $\xi=1$ to write simple coordinate expressions for $\varphi$ and $h$. 
  Firstly, note that $\kappa$ cannot vanish, since this results in a direct product of two flat surfaces (hence a flat solution), with constant density function $h$.

Now, for $\kappa\neq 0$, since $-\frac{\varepsilon_1\varphi''}{\varphi}=\frac{\kappa}2$, the warping function $\varphi$ takes the following forms, depending on the sign of the product $\varepsilon_1\kappa$:
	\[
	\begin{array}{rclr}
		\varphi(t)&=&c_1\sin\left(\sqrt{\frac{\varepsilon_1\kappa}{2}}t\right)+c_2\cos\left(\sqrt{\frac{\varepsilon_1\kappa}{2}}t\right), \,&\text{ if } \varepsilon_1\kappa >0, \\
		\noalign{\medskip}
		\varphi(t)&=&c_1e^{\sqrt{-\frac{\varepsilon_1\kappa}{2}}t}++c_1e^{-\sqrt{-\frac{\varepsilon_1\kappa}{2}}t}, \,&\text{ if } \varepsilon_1\kappa<0,
	\end{array}
	\]
	where $c_1$ and $c_2$ are suitable integration constants so that $\varphi(t),h(t)>0$ for all $t\in I$.
\end{remark}

\subsection{Case 3. The eigenvalues of the Ricci operator are different ($\lambda_2\neq \lambda_3\neq \lambda_4$)}

There are no solutions with three different warping functions. The proof follows the main steps to that given in \cite{Kim-Shin}. Due to its length, in order not to break the argumentative flow, we postpone it to Appendix~\ref{appendix}.

\subsection{Proof of Theorem~\ref{th:classification-diagonalizable}~(2).}
Let $(M,g,h)$ be a $4$-dimensional solution with diagonalizable Ricci operator and harmonic curvature (not locally conformally flat), and such that $g(\nh,\nh)\neq 0$. Then, for any point in $M_{\Ric}$, which is open and dense in $M$, we apply Lemmas~\ref{lemma:nh-Ricci-eigen}-\ref{lemma:local-multi-warped-sol-2} to find the admissible structures at the local level. Note that, in the hypotheses of Lemma~\ref{lemma:local-multi-warped-sol-2}, the fact that $h$ satisfies the Obata equation on $N_1$ is immediate from $\Hes^{N_1}_h=\restr{\Hes_h}{N_1}=h\restr{(\rho-\frac{\tau}{3}g)}{N_1}=-\frac{\kappa h}2 g^{N_1}$ (since $\tau=3\kappa$). \qed


\section{The case with complex eigenvalues}\label{sec:non-diagonalizable-complex}

Throughout the previous section, we have discussed the admissible solutions with diagonalizable Ricci operator (the only possible case if $\nh$ is timelike), and we have seen that the geometric characteristics of such solutions are not dissimilar from those of Riemannian static spaces discussed in \cite{Kim-Shin, Kobayashi,Qing-Yuan} if the solution is non-isotropic. However, the fact that we are working in Lorentzian signature means that $\Ric$ does not diagonalize, in general, when $\nh$ is spacelike or lightlike. For isotropic solutions, it was shown in \cite{Brozos-Mojon} that the eigenvalues of the Ricci operator are necessarily real. In this section, we show that this is also the case for 4-dimensional non-isotropic solutions with harmonic curvature.


\begin{theorem}\label{th:non-existence-complex}
	Let $(M,g,h)$ be a 4-dimensional solution of the vacuum weighted Einstein field equations \eqref{eq:vacuum-Einstein-field-equations}  with harmonic curvature. Then, the Ricci operator of $(M,g)$ has real eigenvalues.
\end{theorem}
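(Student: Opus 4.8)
The plan is to rule out the Type I.b Jordan form \eqref{eq:matrices-type-IaandIb} for the Ricci operator of a $4$-dimensional non-isotropic solution with harmonic curvature. Since by Lemma~\ref{lemma:nh-Ricci-eigen} the vector $\nh$ is always a \emph{real} eigenvector, the only way a complex eigenvalue $a\pm bi$ with $b\neq 0$ can appear is in the $2\times 2$ block acting on $\operatorname{span}\{E_2,E_3\}$, with $\nh$ spacelike (a timelike $\nh$ forces diagonalizability, as noted after Lemma~\ref{lemma:nh-Ricci-eigen}). So I would set up an adapted pseudo-orthonormal frame $\mathcal{B}_1=\{E_1=\nh/|\nh|,E_2,E_3,E_4\}$ with $\Ric$ in the second normal form of \eqref{eq:matrices-type-IaandIb}, where $\lambda=\lambda_1$ and $\alpha=\lambda_4$ are the real eigenvalues, $a,b$ the data of the complex block, and $b\neq 0$ throughout $M$. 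The constraints $\tau=\lambda+2a+\alpha$ constant (Lemma~\ref{le:const-sc}) and $\Delta h=-\tfrac{h\tau}{3}$ are available.

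The first block of steps extracts connection data from the Codazzi condition on $\rho$ (equivalently $\operatorname{div} R=0$), exactly paralleling Lemma~\ref{lemma:eigen-integrable} but now for a non-diagonalizable block. Writing $\Gamma_{ijk}=g(\nabla_{E_i}E_j,E_k)$ and differentiating $\rho$ componentwise, the identity $(\nabla_{E_i}\rho)(E_j,E_k)=(\nabla_{E_j}\rho)(E_i,E_k)$ yields a large family of linear relations among the $\Gamma_{ijk}$, the frame derivatives of $\lambda,\alpha,a,b$, and $b$ itself. In addition, the weighted field equation \eqref{eq:vacuum-Einstein-field-equations-2} gives $\hes_h=h(\rho-2Jg)$, so the Hessian of $h$ has the same block structure as $\rho$; its symmetry $\Hes_h(E_i,E_j)=\Hes_h(E_j,E_i)$ and the commutation $(\nabla_{E_i}\hes_h)(E_j,\cdot)-(\nabla_{E_j}\hes_h)(E_i,\cdot)=R(\nh,E_j,E_i,\cdot)$ (the identity used to derive \eqref{eq:Rnf}) produce still more relations, now tying the $\Gamma$'s and the derivatives of $h$ to the curvature. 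One expects, as in the diagonalizable case, that $\nh$ generates a totally geodesic line, that the level sets of $h$ are controlled, and that the eigenvalue functions $\lambda,\alpha,a,b$ depend only on the coordinate $t$ with $dt=dh/|\nh|$ — a non-diagonalizable analogue of Lemma~\ref{lemma:coordinate-s-eigenvalues} proved by the same inductive trace argument on $\tau=\lambda+2a+\alpha$ and on $\nh$-derivatives of symmetric functions of the eigenvalues. This should reduce the geometry to an explicit low-dimensional warped-type model (plausibly again a multiply warped product over an interval) with finitely many unknown functions of $t$.

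Having reduced to ODEs, the second phase is to impose the full system $G^h=0$ together with $\operatorname{div} W=0$ and obtain a contradiction with $b\neq 0$. Here the strategy announced in the introduction is to treat the collected equations — after clearing denominators — as polynomials in the jet variables (the functions and their first few derivatives, plus $b$), and run a Gr\"obner basis computation to show that $b=0$ is forced, i.e. that $b$ lies in the radical of the ideal generated by the field equations on the open set where a solution would live. Concretely I would: (i) write down the independent components of $G^h$ and of $\operatorname{div} R=dP$ in the reduced frame; (ii) use the connection relations from phase one to eliminate all $\Gamma_{ijk}$ and express every second derivative of $h,\varphi,\dots$ in terms of lower-order data (as was done in the proof of Lemma~\ref{lemma:mult-warping-constant}); (iii) differentiate the resulting first-order polynomial relations once or twice more to generate enough equations; (iv) feed the polynomial system to a Gr\"obner basis algorithm and read off that $1$ or $b$ belongs to the ideal, so no solution with $b\neq 0$ exists. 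The conclusion is that $\Ric$ has only real eigenvalues.

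The main obstacle is phase two: the honest bookkeeping needed to get a \emph{closed}, finite polynomial system. Unlike the locally conformally flat or $D=0$ cases, harmonic curvature alone does not immediately pin down the warped structure, so one must work hard in phase one to show the solution lives on a sufficiently rigid model (and that the frame derivatives of $b$ are themselves expressible in the jet variables), otherwise the ideal involves infinitely many derivatives and the Gr\"obner argument never terminates. I expect the delicate point to be proving the analogue of Lemma~\ref{lemma:coordinate-s-eigenvalues} for the non-diagonalizable block — showing $b$ and the other eigenvalue data depend only on $t$ — since the usual Codazzi relations behave differently on a Jordan block than on an eigenspace; once that is in hand, the elimination and the final Gr\"obner basis computation, though heavy, are mechanical.
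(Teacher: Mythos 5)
Your proposal follows essentially the same route as the paper: use the Codazzi condition (harmonic curvature) together with the weighted equation to show that $\lambda,a,b,\alpha$ are annihilated by $E_2,E_3,E_4$ and to pin down the connection, then close the system under $\nh$-differentiation and run a Gr\"obner basis computation to conclude $b=0$. Two caveats on the middle step, though. First, the expected reduction to a ``warped-type model / multiply warped product over an interval'' does not materialize and is not needed: the complex block obstructs the integrability/umbilicity arguments that produce such splittings in the diagonalizable case, and the paper instead stays entirely in the adapted frame, where the Codazzi relations reduce the unknown connection coefficients to a single function $\Gamma=g(\nabla_{E_2}E_3,E_4)$, the Jacobi identity gives $E_i(\Gamma)=0$, and the curvature identities \eqref{eq:Rnf} yield three polynomial relations in $\Gamma^2,a,b,\alpha,J$ and $H=h^2/|\nh|^2$ that seed the ideal. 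Second, the polynomial system degenerates when $\alpha=a$ (the elimination of $\Gamma$ divides by $\alpha-a$), so that case has to be excluded separately by an elementary hand computation before the Gr\"obner argument applies; your plan does not anticipate this split, but it is a minor repair rather than a flaw in the strategy.
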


Note that the harmonicity of the curvature is an essential assumption in Theorem~\ref{th:non-existence-complex}, since there are solutions with non-real eigenvalues and non-harmonic curvature, as illustrated by the following example.

\begin{example}\label{ex:complex-eigenvalues}
In order to build a solution with complex eigenvalues for the Ricci operator, we consider a left-invariant metric on the Lie group $\mathbb{R}^3\rtimes\mathbb{R}$, this is, a semi-direct extension of the Abelian group. Let $\{e_1,e_2,e_3,e_4\}$ be a basis of the corresponding Lie algebra, where $e_4$ generates the $\mathbb{R}$ factor, and the Lie bracket given by
\[
[e_1,e_4]=-e_1,\quad [e_2,e_4]=e_3,\quad [e_3,e_4]=-e_2.
\]
The Lorentzian metric is given by $\langle e_1,e_1\rangle=\langle e_2,e_2\rangle=-\langle e_3,e_3\rangle=\langle e_4,e_4\rangle=1$. 

Now, we look for an expression of the metric in local coordinates $(x,y,z,t)\in \mathbb{R}^4$.
Using the relation $d\omega(X,Y)=X \omega(Y)-Y\omega(X)-\omega([X,Y])$, for a $1$-form $\omega$, on the dual basis $\{e^1,e^2,e^3,e^4\}$ we obtain
\[
de^1=e^1\wedge e^4,\quad de^2=e^3\wedge e^4, \quad de^3=-e^2\wedge e^4,\quad de^4=0.
\]
By relating the basis $\{e^1,e^2,e^3,e^4\}$ with $\{dx,dy,dz,dt\}$, and integrating the corresponding equations we get a particular solution of the form  $e^1=e^{-t} dx$, $e^2=\cos t dy+\sin t dz$, $e^3=\sin t dy- \cos(t) dz$, $e^4=dt$.
Hence,
	\[
		g=e^{2t}dx^2+\cos(2t)(dy^2-dz^2)+2\sin(2t)dydz+dt^2.
	\]
	Consider the positive density function $h(t)=e^{-t}$, whose gradient is $\nh=-e^{-t}\partial_t$, so $g(\nh,\nh)=e^{-2t}>0$. Moreover, the Ricci and the Hessian operators are given by
	\[
	\begin{array}{l}
		  \Ric(\partial_x)=\partial_t, \quad \Ric(\partial_y)=\partial_z, \quad \Ric(\partial_z)=-\partial_y, \quad
		  \Ric(\partial_t)=\partial_t,\\
		  \noalign{\medskip}
		   \hes_h(\partial_x)=e^{-t}\partial_t, \quad \hes_h(\partial_y)=e^{-t}\partial_z, \quad \hes_h(\partial_z)=-e^{-t}\partial_y, \quad
		  \hes_h(\partial_t)=e^{-t}\partial_t,
	\end{array}
	\]
	so $(\mathbb{R}^4,g,h)$ is a solution of Type I.b with $\lambda=1$, $\alpha=b=-1$ and $a=0$. However, the curvature tensor is not harmonic, since
	\[
		(\nabla_{\partial_x}\rho)(\partial_t,\partial_t)-(\nabla_{\partial_t}\rho)(\partial_x,\partial_t)=2e^{2t}\neq 0.
	\]
\end{example}

In order to prove Theorem~\ref{th:non-existence-complex}, assume, on the contrary, that $(M,g,h)$ is a 4-dimensional solution of the vacuum weighted Einstein field equations  with $\nh$ spacelike and Ricci operator of Type I.b in $M$, as shown in \eqref{eq:matrices-type-IaandIb}. We work on an adapted orthonormal basis $\mathcal{B}_1=\{E_1=\nh/ |\nh|, E_2,E_3,E_4\}$ and see that, by the weighted Einstein equation \eqref{eq:vacuum-Einstein-field-equations-2}, the Hessian operator is given by 
\[
\hes_h\nh=\tilde{\lambda}\nh, \quad \hes_hE_2=\tilde{a}E_2-\tilde{b}E_3, \quad \hes_hE_3=\tilde{b}E_2+\tilde{a}E_3, \quad \hes_hE_4=\tilde{\alpha}E_4,
\]
where $\tilde{\lambda}=h(\lambda-2J)$, $\tilde{a}=h(a-2J)$, $\tilde{b}=hb$ and $\tilde{\alpha}=h(\alpha-2J)$.
We start by using the harmonicity of the curvature to obtain information on $a$, $b$ $\lambda$ and $\alpha$, and on the components of the Levi-Civita connection, with the following two lemmas.

\begin{lemma}\label{le:complex-derivatives}
	Let $(M,g,h)$ be a solution of Type I.b such that $(M,g)$ has harmonic curvature. Then, $a$, $b$, $\lambda$ and $\alpha$ have vanishing derivatives in the direction of $E_2$, $E_3$ and $E_4$. Moreover,
	\begin{equation}\label{eq:complex-eigen-derivatives}
		\begin{array}{rcl}
			\nh(a)&=&h(b^2+(\lambda-a)(a-2J)), \\
			\noalign{\medskip}
			\nh(b)&=&hb(\lambda+2J-2a), \\
			\noalign{\medskip}
			\nh(\alpha)&=&h(\lambda-\alpha)(\alpha-2J), \\
			\noalign{\medskip}
			\nh(\lambda)&=&-h(2b^2-2a^2-\alpha^2+(\lambda+2J)(2a+\alpha)-6J\lambda).
		\end{array}
	\end{equation}
\end{lemma}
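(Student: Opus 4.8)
The plan is to exploit two manifestations of the harmonicity hypothesis: first, the Codazzi equation for the Ricci tensor, $(\nabla_X\rho)(Y,Z)=(\nabla_Y\rho)(X,Z)$, and second, equation~\eqref{eq:Rnf}, which for harmonic curvature ($dP=0$) reduces to $R(\nh,X,Y,Z)=(\rho-2Jg)\wedge dh\,(X,Y,Z)$. Write $\Gamma_{ijk}=g(\nabla_{E_i}E_j,E_k)$ in the adapted frame $\mathcal{B}_1$ and recall from Lemma~\ref{lemma:nh-Ricci-eigen} that $\nh$ is a genuine (real) eigenvector, so $\rho(E_1,E_i)=0$ for $i>1$. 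The first task is to show that $a,b,\lambda,\alpha$ are annihilated by $E_2,E_3,E_4$. For this I would imitate the argument in the proof of Lemma~\ref{lemma:coordinate-s-eigenvalues}: the symmetric functions of the Ricci eigenvalues are recovered from traces of powers of $\Ric$, and the constancy of $\tau$ plus the decomposition~\eqref{eq:decomposition} forces each elementary symmetric function to be a function of $t$ alone; since $\lambda=\lambda_1$ already depends only on $t$, the pair $\{a\pm ib\}$ and $\alpha$ are the roots of a cubic with $t$-dependent coefficients, hence depend only on $t$. Concretely, the Codazzi identity $(\nabla_{E_i}\rho)(E_i,\nh)=(\nabla_{\nh}\rho)(E_i,E_i)$ with $i=2,3,4$ (using the real part of the block) gives the transversal derivatives directly; the off-diagonal Codazzi relations $(\nabla_{E_2}\rho)(E_3,\nh)=(\nabla_{E_3}\rho)(E_2,\nh)$ and $(\nabla_{E_i}\rho)(E_j,E_k)$ symmetrized across the complex block kill $E_2(b),E_3(b)$ and the remaining transversal derivatives of $a$ and $\alpha$.

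With the transversal derivatives eliminated, the four displayed ODEs in~\eqref{eq:complex-eigen-derivatives} are obtained by writing out the Codazzi equation contracted with $\nh$ in the various directions. The key computation is $(\nabla_{E_i}\rho)(E_i,\nh)=(\nabla_{\nh}\rho)(E_i,E_i)$, where the left side is expanded via $E_i(\rho(E_i,\nh))-\rho(\nabla_{E_i}E_i,\nh)-\rho(E_i,\nabla_{E_i}\nh)$ and the Hessian operator $\hes_h=\nabla\nh$ is read off from the weighted Einstein equation~\eqref{eq:vacuum-Einstein-field-equations-2}, so that $\nabla_{E_i}\nh$ is expressed through $\tilde a,\tilde b,\tilde\alpha,\tilde\lambda$. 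Since $\rho(E_i,\nh)=0$ on the orthogonal complement, only the last two terms survive, producing $\nh(\lambda)$ and the equations for $\nh(\alpha)$ in essentially the form already seen in Lemma~\ref{lemma:coordinate-s-eigenvalues} (with the real eigenvalue $\alpha$ playing the role of a $\lambda_i$). For the complex block one must be slightly more careful: the relation $\hes_h E_2=\tilde a E_2-\tilde b E_3$, $\hes_h E_3=\tilde b E_2+\tilde a E_3$ means $\nabla_{E_2}\nh$ and $\nabla_{E_3}\nh$ mix $E_2$ and $E_3$, and tracking the $\tilde b$-terms through the Codazzi identities applied to $\rho(E_2,E_2)$, $\rho(E_3,E_3)$ and $\rho(E_2,E_3)$ yields the coupled system for $\nh(a)$ and $\nh(b)$. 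The equation for $\nh(\lambda)$ then follows either by the same recipe applied to $\rho(E_1,E_1)=\tau-2a-\alpha$ together with constancy of $\tau$, or, more efficiently, by differentiating $\lambda=\tau-2a-\alpha$ in the $\nh$ direction and substituting the already-derived expressions for $\nh(a)$ and $\nh(\alpha)$; a short algebraic simplification recovers the displayed formula.

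The main obstacle I anticipate is bookkeeping in the complex block rather than anything conceptual: because $\hes_h$ is not diagonal on $\spanned\{E_2,E_3\}$, one cannot blindly reuse the diagonalizable-case formulas, and one must carefully separate the $\tilde a$- and $\tilde b$-contributions in each Christoffel-symbol expansion, keeping track of which connection coefficients $\Gamma_{1ij}$, $\Gamma_{ii1}$ survive (by the integrability/umbilicity arguments these are controlled, as in Lemma~\ref{lemma:eigen-integrable}). A secondary subtlety is justifying that $\nabla_{\nh}E_i$ has no $E_1$-component and that $\nabla_{\nh}E_1=0$, which follows from $\nabla t=\nh/|\nh|$ and $g(\nh,\nh)$ depending only on $t$; these facts are needed to ensure the terms $\rho(\nabla_{\nh}E_i,E_i)$ that would otherwise appear in $(\nabla_{\nh}\rho)(E_i,E_i)$ reduce to $\varepsilon_i\nh(\lambda_i)$-type expressions (and, in the complex block, to $\nh(a)$ with a $b$-correction coming from the rotation of the frame along the flow of $\nh$). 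Once these structural points are in place, the four identities drop out after routine simplification.
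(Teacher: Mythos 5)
Your second paragraph is essentially the paper's argument for the four radial identities: the paper obtains $\nh(b)$ from $(\nabla_{\nh}\rho)(E_2,E_3)=(\nabla_{E_2}\rho)(\nh,E_3)$, obtains $\nh(a)$ (together with $g(\nabla_{\nh}E_2,E_3)=0$) from the pair of identities $(\nabla_{\nh}\rho)(E_i,E_i)=(\nabla_{E_i}\rho)(\nh,E_i)$ for $i=2,3$, obtains $\nh(\alpha)$ from the $E_4$--analogue, and gets $\nh(\lambda)$ by differentiating $\tau=\lambda+2a+\alpha$. Your remarks about the $\tilde b$--mixing in the complex block and about $\nabla_{\nh}E_i$ having no $\nh$--component are exactly the points the paper handles, so that half of the proposal is sound.

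The first half is where there is a genuine gap. Your route (b) --- that the off-diagonal Codazzi relations ``kill'' $E_2(b)$, $E_3(b)$ and the remaining transversal derivatives --- does not work as stated: relations such as $(\nabla_{E_2}\rho)(E_3,\nh)=(\nabla_{E_3}\rho)(E_2,\nh)$ reduce (using $\rho(E_i,\nh)=0$ and the known form of $\hes_h$) to purely algebraic identities containing no $E_i$--derivatives of $a,b,\alpha$, while the relations $(\nabla_{E_i}\rho)(E_j,E_k)$ with $i,j,k\in\{2,3,4\}$ couple those derivatives to the unknown Christoffel symbols $\Gamma_{ijk}$; in the paper these very relations are used \emph{afterwards} (Lemma~\ref{le:connection-complex}) to determine the $\Gamma_{ijk}$, and that computation presupposes that the transversal derivatives already vanish, so the argument would be circular. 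Your route (a) is the right idea but is run in the wrong order and with missing prerequisites: the decomposition~\eqref{eq:decomposition} is only established in Section~\ref{sec:4-dim-diagonal} for diagonalizable $\Ric$ and would have to be re-derived here, and the symmetric-function induction of Lemma~\ref{lemma:coordinate-s-eigenvalues} cannot start until the radial ODEs are in hand --- i.e., the ODEs must come \emph{before} the transversal statement, not after. What the paper actually does is: first get $E_i(\lambda)=0$ from $(\nabla_{\nh}\rho)(E_i,\nh)=(\nabla_{E_i}\rho)(\nh,\nh)$; then derive the four ODEs; then differentiate the identity for $\nh(\lambda)$ (and its $\nh$--derivative) in the $E_i$ directions to obtain $E_i(2a+\alpha)=0$, $E_i(2a^2+\alpha^2-2b^2)=0$ and $E_i(2a^3+\alpha^3-6ab^2)=0$ --- which are precisely $E_i(p_k)=0$ for the power sums $p_1,p_2,p_3$ of the eigenvalues $a\pm ib,\alpha$, so this is your symmetric-function idea made concrete --- and finally show that these three linear relations force $((a-\alpha)^2+b^2)E_i(a)=0$, whence $E_i(a)=E_i(b)=E_i(\alpha)=0$ since $b\neq0$. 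This last nondegeneracy step, which is where $b\neq 0$ enters essentially, is absent from your proposal.
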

\begin{proof}
In the frame $\mathcal{B}_1$, and using the weighted Einstein equation \eqref{eq:vacuum-Einstein-field-equations-2}, we compute $(\nabla_{\nh}\rho)(E_2,\nh)=0$ and $(\nabla_{E_2}\rho)(\nh,\nh)=|\nh|^2E_2(\lambda)$. Since $\nh$ is spacelike, from the Codazzi condition $(\nabla_{\nh}\rho)(E_2,\nh)=(\nabla_{E_2}\rho)(\nh,\nh)$, we find $E_2(\lambda)=0$. Similarly, we prove $E_3(\lambda)=E_4(\lambda)=0$. 

We continue to obtain information from the Codazzi condition. On the one hand we have $(\nabla_{\nh}\rho)(E_2,E_3)=-\nh(b)$ and $(\nabla_{E_2}\rho)(\nh,E_3)=-(\lambda-a)\tilde{b}+b\tilde{a}$, which gives the expression for $\nh(b)$. In the same way, we compute $(\nabla_{\nh}\rho)(E_i,E_i)$ and $(\nabla_{E_i}\rho)(\nh,E_i)$ for $i=2,3$ to find
\[
\begin{array}{rcl}
	-\nh(a)+2bg(\nabla_{\nh}E_2,E_3)=(a-\lambda)\tilde{a}-b\tilde{b}, \\
	\noalign{\medskip}
	\nh(a)+2bg(\nabla_{\nh}E_2,E_3)=(\lambda-a)\tilde{a}+b\tilde{b},
\end{array}
\]
which yields $g(\nabla_{\nh}E_2,E_3)=0$ and the expression for $\nh(a)$. On the other hand, $(\nabla_{\nh}\rho)(E_4,E_4)=\nh(\alpha)$ and $(\nabla_{E_4}\rho)(\nh,E_4)=(\lambda-\alpha)\tilde{\alpha}$ gives us an equation for $\nh(\alpha)$.  Finally, since $\tau$ is constant by Lemma~\ref{le:const-sc}, and $\tau=\lambda+2a+\alpha$, we have $0=\nh(\lambda)+2\nh(a)+\nh(\alpha)$, which yields the last equation in \eqref{eq:complex-eigen-derivatives}. Moreover, since we know $E_i(\lambda)=0$ for $i=2,3,4$ and $\tau$ is constant, it follows that $2E_i(a)=-E_i(\alpha)$. Thus, taking the derivative of the equation above in the direction of $E_i$, for $i=2,3,4$, it follows that  $E_i(2a^2+\alpha^2-2b^2)=0$. Now, take
\[
\begin{array}{rcl}
	h^{-1}\nh(a^2+\frac{1}2\alpha^2-b^2)&=&(\lambda+2J)(2a^2+\alpha^2-2b^2)-2J\lambda(2a+\alpha)\\
	\noalign{\medskip}
	&&-2a^3-\alpha^3+6ab^2.
\end{array}
\]
Differentiating this expression in the direction of $E_i$ yields $E_i(2a^3+\alpha^3-6ab^2)=0$. In summary, we have three distinct expressions:
\[
	E_i(\alpha)=-2E_i(a), \quad E_i(2a^2+\alpha^2-2b^2)=0, \quad E_i(2a^3+\alpha^3-6ab^2)=0,
\]
for $i=2,3,4$. Using the first and second ones, we can write $E_i(b)=\frac{a-\alpha}b E_i(a)$,
so now the third equation becomes 
\[
\begin{array}{rcl}
	0&=&6a^2E_i(a)+3\alpha^2E_i(\alpha)-6b^2E_i(a)-12abE_i(b) \\
	\noalign{\medskip}
	&=&-6((a-\alpha)^2+b^2)E_i(a).
\end{array}
\]
Since $b\neq 0$, it follows that $E_i(a)=E_i(\alpha)=E_i(b)=0$.
\end{proof}

\begin{lemma}\label{le:connection-complex}
Let $(M,g,h)$ be a solution of Type I.b such that $(M,g)$ has harmonic curvature. Let $C$ be the matrix such that $C_{1i}=\nabla_{\nh}E_i$,  $C_{i1}=\nabla_{E_i}\nh$ and $C_{ij}=\nabla_{E_i}E_j$, for $i,j\in\{2,3,4\}$. Then,
\[
	C=\begin{pmatrix}
		\tilde{\lambda}\nh & 0 & 0 & 0 \\
		\tilde{a} E_2-\tilde{b}E_3 & \frac{\tilde{a}}{|\nh|^2}\nh+\frac{\alpha-a}b \Gamma E_4 & \frac{\tilde{b}}{|\nh|^2}\nh+\Gamma E_4& \Gamma\left(\frac{\alpha-a}b  E_2- E_3\right) \\
		\tilde{b} E_2+\tilde{a}E_3  & \frac{\tilde{b}}{|\nh|^2}\nh-\Gamma E_4 &-\frac{\tilde{a}}{|\nh|^2}\nh+\frac{\alpha-a}b \Gamma E_4  & -\Gamma\left( E_2+\frac{\alpha-a}b E_3\right) \\
		\tilde{\alpha} E_4   & -\frac{(\alpha-a)^2+b^2}{2b^2} \Gamma E_3& -\frac{(\alpha-a)^2+b^2}{2b^2} \Gamma E_2 &-\frac{\tilde{\alpha}}{|\nh|^2}\nh \\
	\end{pmatrix}
\]
where $\Gamma=g(\nabla_{E_2}E_3,E_4)$.
\end{lemma}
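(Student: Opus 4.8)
The plan is to obtain the matrix $C$ by computing, one by one, the Christoffel symbols $\Gamma_{ijk}=g(\nabla_{E_i}E_j,E_k)$ of the adapted frame $\mathcal{B}_1$. The tools are metric compatibility ($\Gamma_{ijk}=-\Gamma_{ikj}$), the fact that harmonic curvature makes $\rho$ a Codazzi tensor, equation~\eqref{eq:vacuum-Einstein-field-equations-2} in the form $\hes_h=h(\Ric-2J\,\Id)$, and Lemma~\ref{le:complex-derivatives}, which kills the $E_2,E_3,E_4$-derivatives of $a,b,\lambda,\alpha$. I will also use that the $2$-plane $\spanned\{E_2,E_3\}$ is necessarily Lorentzian (a $g$-self-adjoint operator has real eigenvalues on a definite plane, whereas $b\neq0$), so that $E_2$ may be taken timelike and $E_3,E_4$ spacelike.

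The first column is immediate, since $\nabla_{E_i}\nh=\hes_h E_i$ is exactly the stated Hessian operator. From this together with metric compatibility I get the $\nh$-components of the central block: for $i,j\in\{2,3,4\}$,
\[
g(\nabla_{E_i}E_j,E_1)=-g(E_j,\nabla_{E_i}E_1)=-\frac{1}{|\nh|}\,g(\hes_h E_i,E_j),
\]
where I use that $|\nh|$ is constant on the level sets of $h$ because $E_i(g(\nh,\nh))=2\Hes_h(\nh,E_i)=0$ for $i\neq1$; this reproduces the $\nh$-terms in $C$.

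Next I would show $\nabla_{\nh}E_i=0$ for $i=2,3,4$. From $\nabla_{\nh}\nh=\tilde\lambda\nh$ one sees that $\nabla_{\nh}E_1$ is a multiple of $E_1$, hence $0$, so metric compatibility gives $g(\nabla_{\nh}E_i,E_1)=0$; and $g(\nabla_{\nh}E_2,E_3)=0$ was already established inside the proof of Lemma~\ref{le:complex-derivatives}. Setting $p=g(\nabla_{\nh}E_2,E_4)$ and $q=g(\nabla_{\nh}E_3,E_4)$, the Codazzi identities $(\nabla_{\nh}\rho)(E_2,E_4)=(\nabla_{E_2}\rho)(\nh,E_4)$ and $(\nabla_{\nh}\rho)(E_3,E_4)=(\nabla_{E_3}\rho)(\nh,E_4)$ reduce, using Lemma~\ref{le:complex-derivatives} and $\rho(\nh,E_j)=0$ for $j\neq1$, to the homogeneous system $(a-\alpha)p=bq$, $bp=-(a-\alpha)q$, whose determinant is $(a-\alpha)^2+b^2$. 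Since $b\neq0$ this is strictly positive, so $p=q=0$ and hence $\nabla_{\nh}E_i=0$.

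Finally, the components of $\nabla_{E_i}E_j$ with $i,j\in\{2,3,4\}$ tangent to $\spanned\{E_2,E_3,E_4\}$ — the nine independent $\Gamma_{ijk}$ with all indices in $\{2,3,4\}$ — will come from the nine Codazzi equations $(\nabla_{E_i}\rho)(E_j,E_k)=(\nabla_{E_j}\rho)(E_i,E_k)$ with $i,j,k\in\{2,3,4\}$: by Lemma~\ref{le:complex-derivatives} the terms $E_i(\rho(E_j,E_k))$ vanish, and because $\rho(E_1,E_k)=0$ for $k\neq1$ the $\nh$-parts of the connection drop out, leaving a homogeneous linear system in the nine $\Gamma_{ijk}$ with coefficients polynomial in $a,b,\alpha$. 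Solving it — dividing again by $b$ and by $(a-\alpha)^2+b^2$ — should pin all nine down as the displayed multiples of $\Gamma:=g(\nabla_{E_2}E_3,E_4)$. I expect this last step to be the main obstacle: one must check that the nine equations have rank exactly eight, so that all the $\Gamma_{ijk}$ are governed by the single quantity $\Gamma$, while keeping careful track of the signs coming from $\varepsilon_2=-1$, $\varepsilon_3=\varepsilon_4=1$; the algebraic identity that repeatedly legitimizes the divisions is $(a-\alpha)^2+b^2>0$, a direct consequence of $b\neq0$.
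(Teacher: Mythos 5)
Your proposal follows the paper's proof essentially step for step: the first column from $\hes_h E_i=\nabla_{E_i}\nh$, the $\nh$-components of the central block from metric compatibility, $\nabla_{\nh}E_i=0$ from the Codazzi identities $(\nabla_{\nh}\rho)(E_i,E_4)=(\nabla_{E_i}\rho)(\nh,E_4)$ together with the nonvanishing determinant $(a-\alpha)^2+b^2$, and the tangential Christoffel symbols from the remaining Codazzi equations with all indices in $\{2,3,4\}$. The only shortfall is that you leave the final homogeneous system unsolved ("should pin all nine down"); the paper carries this out explicitly, first deducing $\Gamma_{223}=\Gamma_{332}=\Gamma_{442}=\Gamma_{443}=0$ and then solving an indeterminate four-equation system for the five remaining symbols in terms of $\Gamma=\Gamma_{234}$, which confirms your expectation that the system has rank eight.
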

\begin{proof}
	The column $C_{i1}$ is given by the weighted Einstein equation \eqref{eq:vacuum-Einstein-field-equations-2} and the fact that $\nabla_{E_i}\nh=\hes_hE_i$. We also use $g(\nabla_{E_i}E_j,\nh)=-\Hes_h(E_i,E_j)$ to find the component in the direction of $\nh$ of $\nabla_{E_i}E_j$. Now, from the proof of Lemma~\ref{le:complex-derivatives}, we know that $g(\nabla_{\nh}E_2,E_3)=-g(\nabla_{\nh}E_3,E_2)=0$. Next, we compute $(\nabla_{\nh}\rho)(E_i,E_4)=(\nabla_{E_i}\rho)(\nh,E_4)$, with $i=2,3$, to find
	\[
	\begin{array}{rcl}
		(\alpha-a)g(E_2,\nabla_{\nh}E_4)+bg(E_3,\nabla_{\nh}E_4)&=&0, \\
		\noalign{\medskip}
		(\alpha-a)g(E_3,\nabla_{\nh}E_4)-bg(E_2,\nabla_{\nh}E_4)&=&0.
	\end{array}
	\]
	Since $b\neq 0$, we have $g(\nabla_{\nh}E_i,E_4)=-g(\nabla_{\nh}E_4,E_i)=0$, for $i=2,3,4$. Moreover, $g(\nabla_{\nh}E_i,\nh)=\Hes_h(E_i,\nh)=0$ for $i=2,3,4$. This completes the row $C_{1i}$. 
	
	Let $\Gamma_{ijk}=g(\nabla_{E_i}E_j,E_k)$ (notice that $\Gamma_{ijk}=-\Gamma_{ikj}$) and $\nabla_i\rho_{jk}=(\nabla_{E_i}\rho)(E_j,E_k)$. Then, compute $\nabla_2\rho_{33}=2b\Gamma_{223}$ and $\nabla_3\rho_{23}=0$ to find $\Gamma_{223}=0$. Analogously, from $\nabla_3\rho_{22}=\nabla_2\rho_{32}$, we have $\Gamma_{332}=0$. Moreover, from $\nabla_i\rho_{44}=\nabla_4\rho_{i4}$ for $i=2,3$ it follows that
	\[
		0=b\Gamma_{443}+(\alpha-a)\Gamma_{442}, \qquad 0=(\alpha-a)\Gamma_{443}-b\Gamma_{442},
	\]
	from where $\Gamma_{443}=\Gamma_{442}=0$. Hence, the only non-vanishing $\Gamma_{ijk}$ (up to symmetries) are $\Gamma_{4ij}$ and $\Gamma_{ij4}$, where $i,j=2,3$. Finally, we use $\nabla_4\rho_{ii}=\nabla_i\rho_{4i}$ to find
	\[
	2b\Gamma_{423}=-(\alpha-a)\Gamma_{224}-b\Gamma_{234}, \qquad 2b\Gamma_{423}=-(\alpha-a)\Gamma_{334}+b\Gamma_{324},
\]
while $\nabla_3\rho_{24}=\nabla_2\rho_{34}=\nabla_4\rho_{23}$ gives two more relations:
\[
	0=(\alpha-a)\Gamma_{234}-b\Gamma_{224}, \qquad 0=(\alpha-a)\Gamma_{324}+b\Gamma_{334}.
\]
Setting $\Gamma_{234}=\Gamma$, the indeterminate system given by these four equations yields
\[
		\Gamma_{224}=\Gamma_{334}=\frac{\alpha-a}b \Gamma, \qquad \Gamma_{324}=-\Gamma, \qquad \Gamma_{423}=-\frac{(\alpha-a)^2+b^2}{2b^2} \Gamma,
\]
which completes the remaining terms of the matrix $C$.
\end{proof}

The two lemmas above exhaust the amount of information we can extract from the harmonicity of the curvature tensor. However, more compatibility conditions can be obtained through the Jacobi identity of vector fields and the restrictions that the vacuum weighted Einstein equations impose on the curvature tensor.

\begin{lemma}\label{le:Eigamma}
	Let $(M,g,h)$ be a solution of Type I.b of the weighted Einstein equations such that $(M,g)$ has harmonic curvature, with the connection coefficients given by Lemma~\ref{le:connection-complex}. Then, $E_i(\Gamma)=0$ for $i=2,3,4$.
\end{lemma}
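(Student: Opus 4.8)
The plan is to extract the remaining compatibility data from the Jacobi identity for the frame vector fields, using the two previous lemmas to cut down the unknowns. By Lemma~\ref{le:complex-derivatives} the functions $a,b,\lambda,\alpha$ are annihilated by $E_2$, $E_3$ and $E_4$; moreover $E_i(h)=dh(E_i)=g(\nh,E_i)=0$ and $E_i\big(g(\nh,\nh)\big)=2\Hes_h(\nh,E_i)=2\tilde\lambda\,g(\nh,E_i)=0$ for $i=2,3,4$, so the same holds for $\tilde a,\tilde b,\tilde\lambda,\tilde\alpha$ and for every algebraic combination of $a,b,\alpha$ occurring in the matrix $C$ of Lemma~\ref{le:connection-complex}. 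Hence $\Gamma$ is, up to these known quantities, the only structure function whose $E_2$-, $E_3$- and $E_4$-derivatives are still undetermined, and one scalar identity in each direction should be enough.

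First I would read off from $C$ the Lie brackets $[E_i,E_j]=\nabla_{E_i}E_j-\nabla_{E_j}E_i$ for $i,j\in\{2,3,4\}$. In each of them the $\nh$-components of $C_{ij}$ and $C_{ji}$ agree and cancel, so that $\spanned\{E_2,E_3,E_4\}$ is involutive and
\[
[E_2,E_3]=2\Gamma E_4,\qquad [E_2,E_4]=r\,\Gamma E_2+p\,\Gamma E_3,\qquad [E_3,E_4]=p\,\Gamma E_2-r\,\Gamma E_3,
\]
where $r=\tfrac{\alpha-a}{b}$ and $p=\tfrac{(\alpha-a)^2-b^2}{2b^2}$ depend only on $a,b,\alpha$.

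Next I would impose the Jacobi identity $[[E_2,E_3],E_4]+[[E_3,E_4],E_2]+[[E_4,E_2],E_3]=0$. Expanding each double bracket via $[fX,Y]=f[X,Y]-Y(f)X$ and discarding the derivatives $E_i(p)$ and $E_i(r)$, $i=2,3,4$, which vanish by the previous paragraph, the $\Gamma^2$-terms in the $E_4$-slot cancel in pairs and what remains there is $-2E_4(\Gamma)=0$. The $E_2$- and $E_3$-slots produce the homogeneous system
\[
-p\,E_2(\Gamma)+r\,E_3(\Gamma)=0,\qquad r\,E_2(\Gamma)+p\,E_3(\Gamma)=0,
\]
whose determinant $-(p^2+r^2)$ cannot vanish: $r=0$ forces $\alpha=a$ and then $p=-\tfrac12\neq 0$ (here $b\neq 0$ by the definition of Type I.b). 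Hence $E_2(\Gamma)=E_3(\Gamma)=0$ as well, which is the assertion.

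The proof is essentially bookkeeping; the only delicate points are keeping track of the signs in the bracket expansions (in particular those coming from the Lorentzian products $g(E_i,E_i)=\varepsilon_i$) so as to confirm that the $\Gamma^2$-terms really do cancel in the $E_4$-component and that the resulting $2\times2$ system stays invertible --- both facts ultimately rest on $b\neq0$. One can also cross-check the computation against the restrictions on $R(\nh,\cdot,\cdot,\cdot)$ coming from \eqref{eq:Rnf}, which are explicitly known in this frame, but the Jacobi identity alone already yields the claim.
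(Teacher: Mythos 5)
Your argument is correct and is essentially the paper's own proof: the same Jacobi identity $[[E_2,E_3],E_4]+[[E_3,E_4],E_2]+[[E_4,E_2],E_3]=0$, the same brackets read off from the matrix $C$ of Lemma~\ref{le:connection-complex} (with $E_i(a)=E_i(b)=E_i(\alpha)=0$ from Lemma~\ref{le:complex-derivatives}), the cancellation of the $\Gamma^2$-terms in the $E_4$-slot giving $E_4(\Gamma)=0$, and the same invertible $2\times2$ homogeneous system for $E_2(\Gamma),E_3(\Gamma)$, whose determinant $-(p^2+r^2)=-\bigl(\tfrac{(\alpha-a)^2+b^2}{2b^2}\bigr)^2$ is nonzero since $b\neq0$.
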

\begin{proof}
	We use the Jacobi identity of vector fields to write
\[
	[[E_4,E_2],E_3]+[[E_2,E_3],E_4]+[[E_3,E_4],E_2]=0.
\]
Using the notation of Lemma~\ref{le:connection-complex}, the Lie brackets take the form
\[
\begin{array}{rcl}
	[E_4,E_2]&=&-\frac{\alpha-a}b \Gamma E_2-\frac{(\alpha-a)^2-b^2}{2b^2} \Gamma E_3, \qquad [E_2,E_3]=2\Gamma E_4,\\
	\noalign{\medskip}
	[E_3,E_4]&=& -\frac{\alpha-a}b\Gamma E_3+\frac{(\alpha-a)^2-b^2}{2b^2} \Gamma E_2.
\end{array}
\]
Moreover, by Lemma~\ref{le:complex-derivatives}, we have that $E_i(\alpha)=E_i(a)=E_i(b)=0$ for $i=2,3,4$. Hence,
\[
\begin{array}{rcl}
	[[E_4,E_2],E_3]&=&\frac{\alpha-a}b E_3(\Gamma) E_2+\frac{(\alpha-a)^2-b^2}{2b^2} E_3(\Gamma) E_3-2\frac{\alpha-a}b \Gamma^2E_4,  \\
	\noalign{\medskip}
	[[E_2,E_3],E_4]&=&-2E_4(\Gamma)E_4, \\
	\noalign{\medskip}
	[[E_3,E_4],E_2]&=& 2\frac{\alpha-a}b\Gamma^2 E_4+\frac{\alpha-a}bE_2(\Gamma) E_3-\frac{(\alpha-a)^2-b^2}{2b^2} E_2(\Gamma) E_2.
\end{array}
\]
Taking the sum of the three brackets, it follows that $E_4(\Gamma)=0$. Moreover, from the components in the direction of $E_2$ and $E_3$ respectively, we have
\[
	0=\frac{\alpha-a}b E_3(\Gamma) -\frac{(\alpha-a)^2-b^2}{2b^2} E_2(\Gamma), \quad 0=\frac{(\alpha-a)^2-b^2}{2b^2} E_3(\Gamma)+\frac{\alpha-a}bE_2(\Gamma).
\]
The determinant associated to this homogeneous system is  $\left(\frac{(\alpha-a)^2+b^2}{2b^2}\right)^2\neq 0$, so the only solution is $E_2(\Gamma)=E_3(\Gamma)=0$. 
\end{proof}

\begin{lemma}\label{le:eqs-gamma-complex}
	Let $(M,g,h)$ be a solution of Type I.b of the weighted Einstein equations such that $(M,g)$ has harmonic curvature, with the connection coefficients given by Lemma~\ref{le:connection-complex}. Then, the following equations are satisfied:
\begin{align}
	0&=\tfrac{(\alpha-a)^2+b^2}{b^2}\Gamma^2-2\left(a-J\right)-\tfrac{h^2}{|\nh|^2}(\left(a-2J\right)^2+b^2+\left(a-2J\right)\left(\alpha-2J\right)), \label{eq:complexsist1} \\
	0&=\tfrac{(\alpha-a)((\alpha-a)^2+b^2)}{b^2}\Gamma^2+2b^2+\tfrac{h^2}{|\nh|^2}b^2\left(\alpha-2J\right),  \label{eq:complexsist2}  \\
	0&=\tfrac{(\alpha-a)^2+b^2}{b^2}\Gamma^2+\alpha-J+\tfrac{h^2}{|\nh|^2}\left(a-2J\right)\left(\alpha-2J\right).  \label{eq:complexsist3}
\end{align}
\end{lemma}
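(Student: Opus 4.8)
The plan is to evaluate a few sectional-type components of the curvature tensor in the adapted orthonormal frame $\mathcal{B}_1=\{E_1=\nh/|\nh|,E_2,E_3,E_4\}$ in two ways --- via the weighted Einstein equation and via the connection --- and then to extract the three identities by tracing. Since $(M,g)$ has harmonic curvature and, by Lemma~\ref{le:const-sc}, constant scalar curvature, one has $dP=\operatorname{div}R=0$, so \eqref{eq:Rnf} becomes $R(\nh,X,Y,Z)=(\rho-2Jg)\wedge dh\,(X,Y,Z)$. As $\nh=|\nh|E_1$ is a Ricci eigenvector while $dh(E_1)=|\nh|$ and $dh(E_i)=0$ for $i>1$, this determines every component carrying an $E_1$-slot: for $i,j\in\{2,3,4\}$,
\[
R(E_1,E_i,E_1,E_j)=-(\rho-2Jg)(E_i,E_j),
\]
which by the Type~I.b form \eqref{eq:matrices-type-IaandIb} is $a-2J$, $\alpha-2J$ and a multiple of $b$ (up to the signature of $\mathcal{B}_1$) for $(i,j)=(2,2),(4,4),(2,3)$.

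Next I would compute the ``internal'' components $R(E_i,E_j,E_k,E_\ell)$, all indices in $\{2,3,4\}$, from the connection matrix $C$ of Lemma~\ref{le:connection-complex}, through $R(X,Y)Z=(\nabla_{[X,Y]}-[\nabla_X,\nabla_Y])Z$. The key point is that every covariant derivative occurring here is literally an entry of $C$ (namely $\nabla_{E_i}\nh=C_{i1}=\hes_hE_i$ and $\nabla_{E_i}E_j=C_{ij}$), and, by Lemmas~\ref{le:complex-derivatives} and \ref{le:Eigamma}, the functions $a,b,\alpha,\Gamma$ (together with $h,|\nh|$) are killed by $E_2,E_3,E_4$; hence no frame derivatives of the entries of $C$ appear and the whole computation of $\nabla_{E_i}\nabla_{E_j}E_k$ is purely algebraic. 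On expanding, the cross-products among the $E_4$-valued pieces $\nabla_{E_2}E_3=\tfrac{\tilde b}{|\nh|^2}\nh+\Gamma E_4$, $\nabla_{E_4}E_2=-\tfrac{(\alpha-a)^2+b^2}{2b^2}\Gamma E_3$, $\nabla_{E_3}E_4=-\Gamma(E_2+\tfrac{\alpha-a}{b}E_3)$, and the like, yield the $\Gamma^2$-terms, which collect into the coefficient $\tfrac{(\alpha-a)^2+b^2}{b^2}$ (or its product with $\alpha-a$); while the $\nh$-components of the $\nabla_{E_i}E_j$ (carrying $\tfrac{\tilde a}{|\nh|^2},\tfrac{\tilde b}{|\nh|^2},\tfrac{\tilde\alpha}{|\nh|^2}$), once paired through $\nabla_{E_i}\nh=\hes_hE_i$, yield the terms with factor $\tfrac{h^2}{|\nh|^2}$ since $\tilde a=h(a-2J)$, $\tilde b=hb$, $\tilde\alpha=h(\alpha-2J)$.

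Finally I would trace, $\rho(Y,Z)=-\sum_k\varepsilon_kR(E_k,Y,Z,E_k)$, inserting the $E_1$-slot term from the first step and the internal terms from the second, and impose that $\rho(E_2,E_2)$, $\rho(E_4,E_4)$ and $\rho(E_2,E_3)$ reduce to their Type~I.b values. Eliminating $\lambda$ by $\tau=\lambda+2a+\alpha=6J$, these three identities are precisely \eqref{eq:complexsist1}, \eqref{eq:complexsist3} and \eqref{eq:complexsist2}. The only real obstacle is bookkeeping: there are many second covariant derivatives to unwind from $C$, the several $\Gamma^2$-contributions have to be combined correctly into the stated coefficients, and the Lorentzian signs of $\mathcal{B}_1$ (the single $\varepsilon_i=-1$) must be tracked with care; no geometric input beyond Lemmas~\ref{le:complex-derivatives}--\ref{le:Eigamma} and the curvature restriction \eqref{eq:Rnf} is needed.
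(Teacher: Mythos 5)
Your proposal is correct and takes essentially the same route as the paper's proof: the paper likewise computes the internal components $R_{2424}$, $R_{3434}$, $R_{2434}$, $R_{2323}$ algebraically from the connection matrix of Lemma~\ref{le:connection-complex} (using Lemmas~\ref{le:complex-derivatives} and \ref{le:Eigamma} to kill frame derivatives of the coefficients), obtains $R_{2121}=a-2J$, $R_{2131}=b$, $R_{4141}=-\alpha+2J$ from \eqref{eq:Rnf}, and then imposes the Ricci traces $\rho_{22}=-a$, $\rho_{23}=-b$, $\rho_{44}=\alpha$. The only inessential deviation is your remark about eliminating $\lambda$ via $\tau=6J$, which is not actually needed since $\lambda$ never enters these three components.
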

\begin{proof}
Recall that $E_1=\frac{\nh}{|\nh|}$ and denote \[R_{ijkl}=R(E_i,E_j,E_k,E_l)=g((\nabla_{[E_i,E_j]}-[\nabla_{E_i},\nabla_{E_j}])E_k,E_l).\] We will use Lemmas~\ref{le:complex-derivatives}, \ref{le:connection-complex} and \ref{le:Eigamma} to compute the different components of the curvature tensor. For example,
\[
\begin{array}{rcl}
	R(E_2,E_4)E_3&=&\frac{\alpha-a}{b}\Gamma \nabla_{E_2}E_3+\frac{(\alpha-a)^2-b^2}{2b^2}\Gamma\nabla_{E_3}E_3  \\
	\noalign{\medskip}
	&&+\frac{(\alpha-a)^2+b^2}{2b^2}\Gamma\nabla_{E_2}E_2+\frac{\tilde{b}}{|\nh|^2}\nabla_{E_4}\nh+\Gamma\nabla_{E_4}E_4 \\
	\noalign{\medskip}
	&=&\left(\frac{(\alpha-a)((\alpha-a)^2+b^2)}{b^3}\Gamma^2+\frac{\tilde{b}\tilde{\alpha}}{|\nh|^2}\right)E_4.
\end{array}
\]
Other components follow analogously, and the following are of interest:
\begin{equation}\label{eq:curv-comp-complex}
\begin{array}{rcl}
	-R_{3434}=R_{2424}&=&\frac{(\alpha-a)^2+b^2}{b^2}\Gamma^2+\frac{h^2}{|\nh|^2}\left(a-2J\right)\left(\alpha-2J\right), \\
	\noalign{\medskip}
	R_{2434}&=&\frac{(\alpha-a)((\alpha-a)^2+b^2)}{b^3}\Gamma^2+\frac{h^2}{|\nh|^2}b\left(\alpha-2J\right),	\\
	\noalign{\medskip}
	R_{2323}&=&-2\frac{(\alpha-a)^2+b^2}{b^2}\Gamma^2+\frac{h^2}{|\nh|^2}\left(\left(a-2J\right)^2+b^2\right).
\end{array}
\end{equation}
On the other hand, we use equation~\eqref{eq:Rnf} to compute the following components involving $E_1$:
\begin{equation}\label{eq:R1ijk-complex}
	R_{2121}=a-2J, \qquad R_{2131}=b, \qquad R_{4141}=-\alpha+2J.
\end{equation}
Now, using the definition of the Ricci tensor, we have
\[
\begin{array}{rcl}
	-a&=&\rho_{22}=R_{2121}+R_{2323}+R_{2424}, \\
	\noalign{\medskip}
	-b&=&\rho_{23}=R_{2131}+R_{2434},\\
	\noalign{\medskip}
	\alpha&=&\rho_{44}=R_{4141}-R_{2424}+R_{3434}.
\end{array}
\]
Substituting in the expressions given by \eqref{eq:curv-comp-complex} and \eqref{eq:R1ijk-complex}, the result follows.
%
%
\end{proof}

\subsection{Proof of Theorem~\ref{th:non-existence-complex}}
	Let $(M,g,h)$ be a 4-dimensional Type I.b solution of the vacuum weighted Einstein field equations \eqref{eq:vacuum-Einstein-field-equations} such that $(M,g)$ has harmonic curvature. For this solution, Lemmas~\ref{le:complex-derivatives}-\ref{le:eqs-gamma-complex} stated throughout this section apply. Let $H=\frac{h^2}{|\nh|^2}$. We analyze two cases separately: $\alpha=a$ and $\alpha \neq a$. 
	
	\vspace{1em}
	
	\noindent\textbf{Case $\alpha=a$:} Equations~\eqref{eq:complexsist1}, \eqref{eq:complexsist2} and \eqref{eq:complexsist3} in Lemma~\ref{le:eqs-gamma-complex} reduce to
		\[
\begin{array}{rcl}
	0&=&2\left(a-J\right)+H(2\left(a-2J\right)^2+b^2)-\Gamma^2, \\
	\noalign{\medskip}
	0&=&2b^2+Hb^2\left(a-2J\right),  \\
	\noalign{\medskip}
	0&=&a-J+H\left(a-2J\right)^2+\Gamma^2.
\end{array}
\]
Since $b\neq 0$, we solve for $a$ in the second expression to get $a=2\frac{JH-1}{H}$. The remaining two equations become
		\[
	0=b^2H+\frac{4}{H}+2J-\Gamma^2, \qquad 0=\frac{2}{H}+J+\Gamma^2,
\]
so adding both yields
\begin{equation}\label{eq:lambda=a-eq1}
	0=b^2H+\frac{6}{H}+3J.
\end{equation}
Now, notice that $\nh(H)=2h(1-H(\lambda-2J))$. Using this, the fact that $\lambda=6J-3a=\frac{6}{H}$, and the expression for $\nh(b)$ given by Lemma~\ref{le:complex-derivatives}, we differentiate \eqref{eq:lambda=a-eq1} in the direction of $\nh$. This gives $0=2\frac{h}{H^2}(5b^2H^2-12JH+30)$. Hence,
\begin{equation}\label{eq:lambda=a-eq2}
	0=5b^2H^2-12JH+30.
\end{equation}
Combining \eqref{eq:lambda=a-eq1} and \eqref{eq:lambda=a-eq2}, it follows that $0=6+b^2H^2$, which is not possible.

\vspace{1em}

\noindent\textbf{Case $\alpha\neq a$:}  This case requires some fairly long, although straightforward, computations, which we present schematically.  Firstly, we compute the  $\eqref{eq:complexsist2}-(\alpha-a)\eqref{eq:complexsist1}$ and  $\eqref{eq:complexsist3}-\eqref{eq:complexsist1}$ to remove the $\Gamma$ and obtain two polynomials in $\mathbb{R}[J,a,b,\alpha,H]$, that must vanish for our solution:
\[
\begin{array}{rcl}
	\mathfrak{P}_1&=& -a b^2 H-8 J^2 a H-4 J a H \alpha+6 J a^2 H+2 J a+a H \alpha^2-a^3H \\
	 \noalign{\medskip}
	 &&+2 a \alpha-2 a^2-2 J b^2 H+2 b^2 H \alpha+2 b^2+8 J^2 H \alpha-2 J H \alpha^2-2 J \alpha, \\
	  \noalign{\bigskip}

	\mathfrak{P}_2&=& -8 J a H+2 a H \alpha+a^2 H+2 a+b^2 H+12 J^2 H-4 J H \alpha-3 J+\alpha.
%
%
	  
\end{array}
\]	  
Now, use $\nh(H)=2h(1-H(\lambda-2J))$, $\lambda=6J-2a-\alpha$ and the  derivatives given by Lemma~\ref{le:complex-derivatives} to compute two new polynomials $\mathfrak{P}_3=\frac{\nh(\mathfrak{P}_1)}{h}$ and  $\mathfrak{P}_4\frac{\nh(\mathfrak{P}_2)}{2h}$:
\[
\begin{array}{rcl}
\mathfrak{P}_3&= & 8 J a b^2 H-11 a b^2 H \alpha+4 a^2 b^2 H-22 a b^2-96 J^3 a H-64 J^2 a H \alpha  \\
\noalign{\medskip}
&&+108 J^2 a^2 H+24 J^2 a+6 J a^2 H \alpha+28 J a H \alpha^2-40 J a^3 H+30 J a \alpha \\
\noalign{\medskip}
&&-34 J a^2+a^3 H \alpha-3 a^2 H \alpha^2-3 a H \alpha^3+5 a^4 H-6 a^2 \alpha+4 a \alpha^2 \\
\noalign{\medskip}
&& +10 a^3-36 J^2 b^2 H+30 J b^2 H \alpha+30 J b^2-3 b^2 H \alpha^2-b^4 H+2 b^2 \alpha\\
\noalign{\medskip}
&&+96 J^3 H \alpha-44 J^2 H \alpha^2-24 J^2 \alpha+6 J H \alpha^3+4 J \alpha^2,
 \\
	    \noalign{\bigskip}
	  
	  \mathfrak{P}_4&=& -a b^2 H-24 J^2 a H+8 J a H \alpha+8 J a^2 H+6 J a-a H \alpha^2-a^2 H \alpha-a^3 H \\
\noalign{\medskip}
&&-2 a^2+b^2 H \alpha+2 b^2+24 J^3 H-12 J^2 H \alpha-6 J^2+2 J H \alpha^2+3 J \alpha-\alpha^2.
	 
\end{array}
\]
Finally, we compute $\mathfrak{P}_5=\frac{\nh(\mathfrak{P}_3)}{h}$ and  $\mathfrak{P}_6=\frac{\nh(\mathfrak{P}_4)}{h}$, which gives two additional polynomials:
\[
\begin{array}{rcl}
	  \mathfrak{P}_5&=&676 J^2 a b^2 H-514 J a b^2 H \alpha-92 J a^2 b^2 H-900 J a b^2+94 a^2 b^2 H \alpha  \\
\noalign{\medskip}
&&+51 a b^2 H \alpha^2-20 a^3 b^2 H+20 a b^4 H+12 a b^2 \alpha+280 a^2 b^2-1824 J^4 a H \\
\noalign{\medskip}
&&-1536 J^3 a H \alpha+2744 J^3 a^2 H+456 J^3 a+240 J^2 a^2 H \alpha+1012 J^2 a H \alpha^2 \\
\noalign{\medskip}
&&-1564 J^2 a^3 H+692 J^2 a \alpha-840 J^2 a^2+70 J a^3 H \alpha-234 J a^2 H \alpha^2 \\
\noalign{\medskip}
&&-210 J a H \alpha^3+404 J a^4 H-246 J a^2 \alpha-202 J a \alpha^2+460 J a^3-17 a^4 H \alpha \\
\noalign{\medskip}
&&+15 a^3 H \alpha^2+27 a^2 H \alpha^3+15 a H \alpha^4-40 a^5 H+20 a^3 \alpha+46 a^2 \alpha^2\\
\noalign{\medskip}
&& +14 a \alpha^3-80 a^4-840 J^3 b^2 H+696 J^2 b^2 H \alpha+672 J^2 b^2-138 J b^2 H \alpha^2\\
\noalign{\medskip}
&&-16 J b^4 H+38 J b^2 \alpha+9 b^2 H \alpha^3-9 b^4 H \alpha-18 b^2 \alpha^2-24 b^4+1824 J^4 H \alpha \\
\noalign{\medskip}
&&-1208 J^3 H \alpha^2-456 J^3 \alpha+312 J^2 H \alpha^3+148 J^2 \alpha^2-30 J H \alpha^4-12 J \alpha^3,	  
	  
\\
	  \noalign{\bigskip}
	  
	\mathfrak{P}_6&=& -7 a b^2 H \alpha+4 a^2 b^2 H-22 a b^2-336 J^3 a H+160 J^2 a H+184 J^2 a^2 H \alpha \\
\noalign{\medskip}
&&+84 J^2 a-36 J a H \alpha^2-48 J a^2 H \alpha-48 J a^3 H-12 J a \alpha-50 J a^2\\
\noalign{\medskip}
&&+3 a H \alpha^3+5 a^2 H \alpha^2+5 a^3 H \alpha+5 a^4 H+2 a \alpha^2+2 a^2 \alpha+10 a^3 \\
\noalign{\medskip}
&&-24 J^2 b^2 H+24 J b^2 H \alpha+38 J b^2-3 b^2 H \alpha^2-b^4 H-2 b^2 \alpha+240 J^4 H \\
\noalign{\medskip}
&&-168 J^3 H \alpha-60 J^3+52 J^2 H \alpha^2+42 J^2 \alpha-6 J H \alpha^3-22 J \alpha^2+4 \alpha^3.  
	  
\end{array}
\]
Thus, we have the system of polynomial equations in the variables $J,a,b,\alpha,H$ given by $\{\mathfrak{P}_i=0\}$. Let $\mathcal{I}=\langle\mathfrak{P}_i\rangle$ be the ideal generated by the polynomials $\mathfrak{P}_i$. Notice that a solution of the system is a solution of any polynomial in the ideal $\mathcal{I}\subset \mathbb{R}[J,a,b,\alpha,H]$. Now, we look for an appropriate polynomial by computing a Gröbner basis $\mathcal{G}$ for $\mathcal{I}$ using graded lexicographic order (we refer to \cite{Cox-Little-Oshea} for details on the properties of Gröbner basis and some algorithms used to compute them). As a result, we obtain a basis with 13 polynomials, which include the following:
\[
	\mathfrak{G}=16 b^8+8b^6\alpha^2+b^4\alpha^4\in \mathcal{G}.
\]
Since $\mathfrak{G}\in \mathcal{I}$, it must vanish, so we conclude that  $b=0$ necessarily, which contradicts the assumption that the solution is of Type I.b. \qed

\section{Non-diagonalizable cases with real eigenvalues}\label{sec:non-diagonalizable-real}

In this section, we focus on the non-diagonalizable cases with real eigenvalues. Hence, we will tackle 4-dimensional solutions $(M,g,h)$ of the vacuum weighted Einstein field equations  with $\nh$ spacelike, such that the Ricci operator is of Type II or Type III, as given by \eqref{eq:matrices-type-IIandIII}. We also include in this section any isotropic solutions that are either 2-step nilpotent or 3-step nilpotent, and we will use the same frame as in \eqref{eq:matrices-iso} in this case. As in the previous two sections, all solutions are assumed to have harmonic curvature and the results will complete the proof of Theorem~\ref{th:classification-harmonic}, which is included at the end of the section.

\subsection{Type II solutions}
We begin by analyzing solutions with Ricci operator of Type II. We already know from \cite{Brozos-Mojon} that isotropic solutions have nilpotent Ricci operator. Hence, we consider the case with $\nh$ spacelike first. 

Assume $\operatorname{Ric}$ is of Type II with $\nh$ spacelike. Then, there exists a pseudo-orthonormal frame $\mathcal{B}_2$ as in \eqref{eq:matrices-type-IIandIII}, so that the Ricci operator is given by $\Ric \nh=\lambda \nh$, $\Ric U=\alpha U+\varepsilon V$, $\Ric V=\alpha V$ and $\Ric E_2=\beta E_2$. Now, from the weighted Einstein field equations \eqref{eq:vacuum-Einstein-field-equations}, it follows that the Hessian operator $\hes_h=\nabla \nh$ has the following form in this frame: 
\[
	\hes_h \nh=\tilde{\lambda}\nh, \quad \hes_hU=\tilde{\alpha} U+\varepsilon  h V, \quad  \hes_hV=\tilde{\alpha} V, \quad  \hes_hE_2=\tilde{\beta} E_2,
\] 
where $\tilde{\lambda}=h(\lambda-2J)$, $\tilde{\alpha}=h(\alpha-2J)$ and $\tilde{\beta}=h(\beta-2J)$. 

\begin{lemma}\label{le:split-typeII-spacelike}
	Let $(M,g,h)$ be a Type II solution with $\nh$ spacelike. Then $M$ splits as a direct product $I\times N$, with metric $g^M=dt^2\oplus g^N$ where $\partial_t=E_1=\nh/|\nh|$.
\end{lemma}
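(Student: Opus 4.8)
The plan is to mimic the argument already used in the diagonalizable case (Lemma~\ref{lemma:eigen-integrable} and the discussion around \eqref{eq:decomposition}), adapting it to the non-diagonalizable Jordan block. First I would exploit the fact that $\nh$ is a real eigenvector of $\Ric$ (Lemma~\ref{lemma:nh-Ricci-eigen}): since $\nh$ is spacelike, $|\nh|=\sqrt{g(\nh,\nh)}$ is well-defined, and the weighted equation \eqref{eq:vacuum-Einstein-field-equations-2} says $\hes_h=h(\rho-2Jg)$, so the Hessian operator has the block form displayed just above the statement. In particular $\Hes_h(\nh,X)=0$ for every $X\perp\nh$, which gives $X(g(\nh,\nh))=2\Hes_h(\nh,X)=0$; hence $|\nh|$ is constant on the level sets of $h$ and the distribution spanned by $\nh$ is totally geodesic ($\nabla_{E_1}E_1=0$, where $E_1=\nh/|\nh|$).

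Next I would show that the orthogonal distribution $E_1^\perp=\spanned\{U,V,E_2\}$ is integrable and that its leaves are totally geodesic, so that the product splits as a \emph{direct} (not warped) product. Integrability and the form of the second fundamental form come from the harmonicity of the curvature, i.e. the Codazzi equation for $\rho$: writing $\Gamma_{ijk}=g(\nabla_{E_i}E_j,E_k)$ in the frame $\mathcal{B}_2$ and expanding $(\nabla_{E_i}\rho)(E_j,E_k)=(\nabla_{E_j}\rho)(E_i,E_k)$ with one index equal to $1$, exactly as in the proof of Lemma~\ref{lemma:eigen-integrable}, forces the mixed connection coefficients $\Gamma_{1jk}$ and $\Gamma_{jk1}$ (for $j,k>1$) to vanish whenever the relevant eigenvalue difference $\lambda-\alpha$ or $\lambda-\beta$ is nonzero, and more care is needed on the $\alpha$-block because of the off-diagonal $\varepsilon$. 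The second fundamental form of a leaf is then $II(X,Y)=-g(\nabla_X Y,E_1)E_1\,\varepsilon_1=\Hes_h(X,Y)E_1/|\nh|$ up to sign, and this vanishes precisely because the $\hes_h$ block tangent to $E_1^\perp$ has no $\nh$-component. Finally, since $dh/|\nh|$ is closed (as $|\nh|$ is constant on level sets of $h$), there is a coordinate $t$ with $dt=dh/|\nh|$, $\nabla t=E_1$, so the splitting is $g^M=dt^2\oplus g^N$ with $N$ a leaf of $E_1^\perp$, which is what the lemma asserts.

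The main obstacle I expect is handling the non-diagonalizable $\alpha$-block: unlike in Lemma~\ref{lemma:eigen-integrable}, the eigenvector decomposition is not orthogonal ($g(U,V)=1$, $g(U,U)=g(V,V)=0$) and $\Ric$ acts on $\spanned\{U,V\}$ with a nilpotent part, so the Codazzi relations \eqref{eq:rel-eigen} must be re-derived in this pseudo-orthonormal frame, keeping track of the extra term coming from $\Ric U=\alpha U+\varepsilon V$. One must check that the identities $(\nabla_{\nh}\rho)(U,\nh)=(\nabla_U\rho)(\nh,\nh)$ and its analogues with $V$, $E_2$ still kill the mixed coefficients $g(\nabla_U E_1,V)$, $g(\nabla_U E_1,E_2)$, etc., and that $\nabla_{E_1}$ preserves $E_1^\perp$; since $\lambda\neq\alpha$ (the $\nh$-eigenvalue is simple, as $\nh$ spans its own eigenspace while $U,V,E_2$ span the rest) the relevant coefficients of these difference relations are nonzero, so the mixed terms indeed vanish. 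The remaining verifications — that $g(\nabla_X Y,E_1)$ reduces to $-\Hes_h(X,Y)/|\nh|$ for $X,Y\in E_1^\perp$ and that this has no component forcing a warping — are then routine, since $\Hes_h$ restricted to $E_1^\perp$ is $h(\rho-2Jg)$ restricted there, with no $E_1$-entries by construction of $\mathcal{B}_2$.
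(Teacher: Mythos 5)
The first half of your plan coincides with the paper's proof and is already sufficient: since $\hes_hE_1=\tilde\lambda E_1$, one has $\Hes_h(\nh,X)=0$ for $X\perp\nh$, so $|\nh|$ is constant on the level sets of $h$, the line field $\spanned\{\nh\}$ is totally geodesic, and $dh/|\nh|$ is closed, giving the coordinate $t$ with $\nabla t=E_1$. The integrability of $E_1^\perp$ needs no Codazzi input at all: the paper checks it from the symmetry of the Hessian, $g([U,V],\nh)=\Hes_h(V,U)-\Hes_h(U,V)=0$ (equivalently, $E_1^\perp=\ker dh$ is integrable for free). In adapted coordinates this yields $g=dt^2\oplus g^N$ exactly as in \eqref{eq:decomposition}, where $g^N$ is a $t$-dependent family of metrics on the leaves; that is all the lemma asserts.

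The second half of your plan contains a genuine error. You claim the leaves of $E_1^\perp$ are totally geodesic, hence that the splitting is a direct product in the strict sense, ``because the $\hes_h$ block tangent to $E_1^\perp$ has no $\nh$-component.'' That property only says $\Hes_h(X,\nh)=0$ for $X\perp\nh$, i.e.\ that $\hes_h$ preserves $E_1^\perp$; it says nothing about the tangential block. The second fundamental form of a leaf is $II(X,Y)=-\frac{1}{|\nh|}\Hes_h(X,Y)\,E_1$, and the tangential block of $\Hes_h$ equals $h(\rho-2Jg)$ restricted to $E_1^\perp$, which does not vanish: for instance $\Hes_h(U,V)=\tilde\alpha=h(\alpha-2J)$ and $\Hes_h(E_2,E_2)=\tilde\beta$. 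So the leaves are only umbilical-type, not totally geodesic, and the coefficients $g(\nabla_XY,E_1)$ for $X,Y\in E_1^\perp$ do not vanish (compare Lemma~\ref{lemma:eigen-integrable}, where even in the diagonalizable case one gets $\Gamma_{ii1}=-\varepsilon_i\beta_i\neq0$ and only umbilical leaves, leading to warped rather than direct products). Indeed, the remark following Theorem~\ref{th:typeII} exhibits the fiber metric coefficients $r$, $H$, $F$ as genuine functions of $t$. A secondary issue: your Codazzi-based elimination of mixed terms assumes $\lambda\neq\alpha$, which is not guaranteed for Type II. If you drop the totally-geodesic claim and retain only the first half, your argument reduces to the paper's and is correct.
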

\begin{proof}
Since $\hes_hE_1=\tilde{\lambda}E_1$, the distribution generated by $\nh$ is totally geodesic. Furthermore, we see that
\[
\begin{array}{rcl}
	0&=&U(g(\nh,V))=g(\nabla_{U}\nh,V)+g(\nh,\nabla_{U}V)=\tilde{\alpha}+g(\nh,\nabla_{U}V), \\
	\noalign{\medskip}
	0&=&V(g(\nh,U))=g(\nabla_{V}\nh,U)+g(\nh,\nabla_{V}U)=\tilde{\alpha}+g(\nh,\nabla_{V}U),
\end{array}
\]
so $g([U,V],\nh)=0$. Similarly, we verify that $g([U,E_2],\nh)=g([V,E_2],\nh)=0$. Thus, $\mathrm{span}\{U,V,E_2\}$ is close under the Lie bracket, and the distribution generated by $\mathrm{span}\{U,V,E_2\}$ is integrable. Following the same argument that was used in Section~\ref{sec:4-dim-diagonal} to obtain \eqref{eq:decomposition}, it follows that, locally, $M$ splits as a product $I\times N$, with the metric $g^M=dt^2\oplus g^N$, where
 $t$ is  such that $dt=dh/|\nh|$ and $h=h(t)$. Note that, since $\nh$ is spacelike, $\partial_t=\nabla t=E_1$.
\end{proof}

Notice that, from Lemma~\ref{le:split-typeII-spacelike} we can compute $\Hes_{h}(\partial_t,\partial_t)=h''$, and $\lambda=h^{-1}h''+2J$, so $\lambda$ depends only on $t$. This is indeed true for the three eigenvalues.

\begin{lemma}\label{lemma:caseII-coordinate-s-eigenvalues}
	Let $(M,g,h)$ be a Type II solution with $\nh$ spacelike. Then, all eigenvalues of the Ricci operator depend only on the local coordinate $t$. Thus, for the adapted frame $\mathcal{B}_2=\{E_1,U,V,E_2\}$ one has
	\[U(\alpha)=V(\alpha)=E_2(\alpha)=0 \text{ and } U(\beta)=V(\beta)=E_2(\beta)=0.\]
\end{lemma}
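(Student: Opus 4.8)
The plan is to mirror the proof of Lemma~\ref{lemma:coordinate-s-eigenvalues}. Since $\lambda=h^{-1}h''+2J$ has already been seen to depend only on $t$, and $\tau=\lambda+2\alpha+\beta$ is constant by Lemma~\ref{le:const-sc}, it suffices to prove that $\alpha$ (hence also $\beta$) depends only on $t$, and the route is to show that every power sum $p_k:=\tr(\Ric^k)=\lambda^k+2\alpha^k+\beta^k$ does. First I would record the connection facts needed: $g(\nh,\nh)$ and $h$ depend only on $t$ (because $X(g(\nh,\nh))=2\Hes_h(\nh,X)=0$ and $X(h)=g(\nh,X)=0$ for every $X\perp\nh$), and $\nabla_{E_1}E_1=0=\nabla_\nh E_1$ (since $\nabla_\nh\nh=\tilde\lambda\nh$ is proportional to $\nh$, so $\nabla_{E_1}E_1$, being orthogonal to $E_1$, must vanish). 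Since $\operatorname{div}R=0$ and $\tau$ is constant, $\rho$ is a Codazzi tensor, so $(\nabla_X\rho)(Y,Z)$ is totally symmetric in $X,Y,Z$.

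The key computational step is to evaluate this Codazzi identity on two triples, using $g(\nh,\nabla_XY)=X(g(\nh,Y))-\Hes_h(X,Y)$ and the Hessian relations $\hes_h\nh=\tilde\lambda\nh$, $\hes_hU=\tilde\alpha U+\varepsilon hV$, $\hes_hV=\tilde\alpha V$, $\hes_hE_2=\tilde\beta E_2$. On $(\nh,E_2,E_2)$ one gets $\nh(\beta)=h(\lambda-\beta)(\beta-2J)$, exactly as in the diagonalizable case. On $(\nh,U,V)$ one must work a little in the degenerate pair $\{U,V\}$: one checks that $\nabla_\nh U$ and $\nabla_\nh V$ carry no $E_1$-component and no component along their own null direction, so that $\rho(\nabla_\nh U,V)$ and $\rho(U,\nabla_\nh V)$ cancel against each other because $g(\nabla_\nh U,V)+g(U,\nabla_\nh V)=\nh(g(U,V))=0$; the off-diagonal entry $\varepsilon$ of the Jordan block then drops out and one is left with $\nh(\alpha)=h(\lambda-\alpha)(\alpha-2J)$, the very same relation appearing in Lemma~\ref{lemma:coordinate-s-eigenvalues}.

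Next comes the induction. From $\nh(\tau)=0$ one also reads off that $\nh(\lambda)$ is a function of $t$ alone. Differentiating $p_k$ and substituting the two relations above, after collecting terms one obtains
\[
\nh(p_k)=k\lambda^{k-1}\nh(\lambda)+kh(\lambda+2J)(p_k-\lambda^k)-kh(p_{k+1}-\lambda^{k+1})-2kJ\lambda h(p_{k-1}-\lambda^{k-1}).
\]
Solving for $p_{k+1}$ and using that $\nabla t=\nh/|\nh|$ with $|\nh|$ depending only on $t$ (so that $\nh$ sends functions of $t$ to functions of $t$), the inductive hypothesis that $p_{k-1},p_k$, $\lambda$ and $h$ all depend only on $t$ forces $p_{k+1}$ to depend only on $t$; the base cases $p_0=4$, $p_1=\tau$ are trivial. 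Then $A:=p_1-\lambda=2\alpha+\beta$ and $B:=p_2-\lambda^2=2\alpha^2+\beta^2$ depend only on $t$, and eliminating $\beta$ yields a quadratic $6\alpha^2-4A\alpha+(A^2-B)=0$ for $\alpha$ with coefficients depending only on $t$; as $\Ric$ has constant Type~II on $M$, the eigenvalue $\alpha$ is smooth and therefore equals the continuously varying root of this quadratic, so it depends only on $t$, and then so does $\beta=A-2\alpha$. Finally $U,V,E_2$ span $\nh^\perp=\ker dt$, hence annihilate every function of $t$, giving $U(\alpha)=V(\alpha)=E_2(\alpha)=0$ and $U(\beta)=V(\beta)=E_2(\beta)=0$.

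The step I expect to cost the most care is the Codazzi computation on the triple $(\nh,U,V)$: obtaining the clean identity $\nh(\alpha)=h(\lambda-\alpha)(\alpha-2J)$ requires handling the identity correctly in the degenerate pseudo-orthonormal frame $\{E_1,U,V,E_2\}$ and verifying the cancellation of the $\nabla_\nh U,\nabla_\nh V$ contributions, which is what makes the Jordan block invisible to this component. Once that relation is in place, the power-sum induction and the final elimination are a routine adaptation of the argument in Lemma~\ref{lemma:coordinate-s-eigenvalues}.
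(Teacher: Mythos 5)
Your proposal is correct and follows essentially the same route as the paper: the Codazzi identity applied to $(\nh,U,V)$ and $(\nh,E_2,E_2)$ yields $\nh(\alpha)=h(\lambda-\alpha)(\alpha-2J)$ and $\nh(\beta)=h(\lambda-\beta)(\beta-2J)$, after which the paper simply invokes ``the same process as in Lemma~\ref{lemma:coordinate-s-eigenvalues}'' for the power-sum induction that you write out explicitly. The only cosmetic remark is that the cancellation in the $(\nh,U,V)$ computation needs nothing about the components of $\nabla_{\nh}U$ or $\nabla_{\nh}V$ beyond metric compatibility ($g(\nabla_{\nh}U,V)+g(U,\nabla_{\nh}V)=\nh(g(U,V))=0$ and $g(V,\nabla_{\nh}V)=\tfrac12\nh(g(V,V))=0$), which you also state.
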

\begin{proof}
Since the curvature tensor is harmonic, the Ricci tensor is Codazzi. Hence, on the one hand, we have
\[
\begin{array}{rcl}
 	(\nabla_{\nh}\rho)(U,V)&=&\nh(\rho(U,V))-\rho(\nabla_{\nh}U,V)-\rho(U,\nabla_{\nh}V) \\
	\noalign{\medskip}
	&=&\nh(\alpha)-\alpha g(\nabla_{\nh}U,V)-\alpha g(U,\nabla_{\nh}V)-\varepsilon g(V,\nabla_{\nh}V)\\
	\noalign{\medskip}
	&=&\nh(\alpha),
\end{array}
\]
where we have used $\nh (g(U,V))=0$ and $\nh (g(V,V))=0$. On the other hand,
\[
\begin{array}{rcl}
 	(\nabla_{U}\rho)(\nh,V)&=&-\rho(\nabla_{U}\nh,V)-\rho(\nh,\nabla_{U}V) \\
	\noalign{\medskip}
	&=&-\alpha g(\nabla_{U}\nh,V)-\lambda g(\nh,\nabla_{U}V)=(\lambda-\alpha)\tilde{\alpha},
\end{array}
\]
so we end up with $\nh(\alpha)=h(\lambda-\alpha)(\alpha-2J)$. We can also write $(\nabla_{\nh}\rho)(E_2,E_2)=\nh(\beta)$, and $(\nabla_{E_2}\rho)(\nh,E_2)=(\lambda-\beta)\tilde{\beta}$, so that $\nh(\beta)=h(\lambda-\beta)(\beta-2J)$. Now, since $\tau=\lambda+2\alpha+\beta$ is constant and $\lambda$ depends only on $t$, we apply the same process as in Lemma~\ref{lemma:coordinate-s-eigenvalues} to show that both $\alpha$ and $\beta$ depend only on $t$.
\end{proof}

The fact that the Ricci tensor is Codazzi, together with the information already obtained, allow to compute some Christoffel symbols as follows.

\begin{lemma}\label{lemma:connection-coeffs-typeII}
	Let $(M,g,h)$ be a Type II solution with $\nh$ spacelike. Then, for the adapted frame $\mathcal{B}_2=\{\nh,U,V,E_2\}$, the following equations are satisfied:
	\[
	\begin{array}{rcl}
		0&=&(\alpha-\beta)g(\nabla_{E_2}V,E_2), \\
		\noalign{\medskip}
 		0&=&(\alpha-\beta)g(\nabla_{\nh}U,E_2)+\varepsilon g(\nabla_{\nh}V,E_2), \\
		\noalign{\medskip}
		0&=&(\alpha-\beta)g(\nabla_{\nh}V,E_2), \\
		\noalign{\medskip}
		0&=&g(\nabla_VV,U), \\
		\noalign{\medskip}
		0&=&(\alpha-\beta)g(\nabla_VV,E_2), \\
		\noalign{\medskip}
		0&=& (\alpha-\beta)g(E_2,\nabla_UU)+\varepsilon g(E_2,\nabla_UV)-2\varepsilon g(U,\nabla_{E_2}V), \\
		\noalign{\medskip}
		0&=& (\alpha-\beta)g(\nabla_VU,E_2)+\varepsilon g(\nabla_VV,E_2),\\ 
		\noalign{\medskip}
		0&=& (\alpha-\beta)g(\nabla_UV,E_2), \\
		\noalign{\medskip}
		0&=&(\alpha-\beta)g(\nabla_{E_2}U,E_2)+\varepsilon g(\nabla_{E_2}V,E_2).
	\end{array}
	\]
\end{lemma}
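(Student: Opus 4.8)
The plan is to extract all nine identities from the single hypothesis that the Ricci tensor $\rho$ is Codazzi (equivalently, that the curvature is harmonic), by evaluating the Codazzi equation
\[
(\nabla_X\rho)(Y,Z)=(\nabla_Y\rho)(X,Z)
\]
on well-chosen triples of frame vectors from $\mathcal{B}_2=\{\nh,U,V,E_2\}$. The raw material we already have is: the explicit form of $\Ric$ in this frame; the Hessian operator $\hes_h=\nabla\nh$, which is $\nabla_{\nh}\nh=\tilde\lambda\nh$, $\nabla_UV$ has $\nh$-component $-\tilde\alpha$ (from $g(\nabla_U\nh,V)=\Hes_h(U,V)$), etc.; the fact (Lemma~\ref{lemma:caseII-coordinate-s-eigenvalues}) that $\alpha$ and $\beta$ are functions of $t$ alone, so that $U(\alpha)=V(\alpha)=E_2(\alpha)=U(\beta)=V(\beta)=E_2(\beta)=0$; and the metric normalizations $g(\nh,\nh)=|\nh|^2$ constant on $M_{\Ric}$ with $g(U,V)=1$, $g(E_2,E_2)=1$, $g(U,U)=g(V,V)=0$.

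First I would write out the general expansion
\[
(\nabla_X\rho)(Y,Z)=X(\rho(Y,Z))-\rho(\nabla_XY,Z)-\rho(Y,\nabla_XZ),
\]
and feed in $\rho(\nh,\cdot)=\lambda g(\nh,\cdot)$, $\rho(U,\cdot)=\alpha g(U,\cdot)+\varepsilon g(V,\cdot)$, $\rho(V,\cdot)=\alpha g(V,\cdot)$, $\rho(E_2,\cdot)=\beta g(E_2,\cdot)$. Because each eigenvalue is $t$-dependent only and because $\nabla$ is metric, the derivative terms $X(\rho(Y,Z))$ either vanish or reduce to a known multiple of $h$; the bulk of each expression is a linear combination of Christoffel symbols $g(\nabla_XY,E_2)$ with coefficients built from $\alpha-\beta$, $\alpha-\lambda$, $\varepsilon$. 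Systematically:
the first identity comes from $(V,E_2,E_2)$ paired with $(E_2,V,E_2)$; the second and third from the pairs $(\nh,U,E_2)\leftrightarrow(U,\nh,E_2)$ and $(\nh,V,E_2)\leftrightarrow(V,\nh,E_2)$, using Lemma~\ref{lemma:caseII-coordinate-s-eigenvalues} to kill the $\nh$-derivative terms that survive; the fourth and fifth from $(V,V,U)\leftrightarrow(V,V,U)$-type contractions, i.e. evaluating on $(V,V,U)$ and on $(V,V,E_2)$ (here $g(\nabla_VV,\nh)=-\Hes_h(V,V)=0$ is used); the sixth from $(U,U,E_2)\leftrightarrow(U,E_2,U)$, where the cross term $g(U,\nabla_{E_2}V)$ appears via the $\varepsilon g(V,\cdot)$ part of $\rho(U,\cdot)$; the seventh and eighth from $(V,U,E_2)\leftrightarrow(U,V,E_2)$, splitting into a symmetric and an antisymmetric combination; the ninth from $(E_2,U,E_2)\leftrightarrow(U,E_2,E_2)$.

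The main obstacle I anticipate is purely bookkeeping: several of these Codazzi evaluations produce, before simplification, a mixture of $\nh$-derivative terms, $E_2(\alpha)$- or $E_2(\beta)$-type terms, and Christoffel symbols of the form $g(\nabla_{\nh}U,E_2)$, $g(\nabla_{E_2}V,\nh)$, and one must repeatedly invoke that $\nabla\nh=\hes_h$ is block-diagonal in the sense dictated by the Hessian (so e.g. $g(\nabla_{E_2}\nh,U)=\Hes_h(E_2,U)=0$), together with Lemma~\ref{lemma:caseII-coordinate-s-eigenvalues}, to clear the scalar-derivative terms and isolate exactly the stated connection-coefficient relation. There is also the subtlety that the $\varepsilon g(V,\cdot)$ piece of $\Ric U$ is the non-diagonalizable part, so it contributes the $\varepsilon g(\nabla_{\cdot}V,E_2)$ and $\varepsilon g(U,\nabla_{E_2}V)$ terms that distinguish Type II from the diagonalizable case; care is needed to track its sign and to not double-count it when $\rho$ is evaluated on an argument equal to $U$. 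Once the triples are chosen and the reductions applied, each line of the lemma drops out directly, so no deeper structural input is required beyond the Codazzi equation and the already-established facts.
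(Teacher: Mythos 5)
Your proposal is correct and follows exactly the paper's route: the paper likewise derives every line by evaluating the Codazzi identity for $\rho$ on suitable triples from $\mathcal{B}_2$, using Lemma~\ref{lemma:caseII-coordinate-s-eigenvalues} and the structure of $\Ric$ and $\hes_h$ to kill the scalar-derivative terms (it works out only the first line explicitly and omits the rest as "similar"). One bookkeeping slip: the identity $g(\nabla_VV,U)=0$ comes from the multiset $\{U,U,V\}$ (namely $(\nabla_V\rho)(U,U)=2\varepsilon g(\nabla_VV,U)$ versus $(\nabla_U\rho)(V,U)=0$), not from $\{V,V,U\}$, on which both independent components of the Codazzi tensor vanish identically.
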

\begin{proof}
	Using Lemma~\ref{lemma:caseII-coordinate-s-eigenvalues} and the Codazzi character of the Ricci tensor, we further analyze the connection coefficients for the different vectors. For example,
\[
 	(\nabla_{V}\rho)(E_2,E_2)=V(\beta)-2\rho(\nabla_{V}E_2,E_2)=-2\beta g(\nabla_{V}E_2,E_2)=0,
\]
and similarly, $(\nabla_{E_2}\rho)(V,E_2)=(\alpha-\beta)g(\nabla_{E_2}V,E_2)$, hence $(\alpha-\beta)g(\nabla_{E_2}V,E_2)=0$. The remaining components of the covariant derivative of the Ricci tensor are computed in a similar manner, and we omit details.
\end{proof}

Once we have obtained enough information on the Levi-Civita connection with respect to the adapted frame, we are ready to give the following classification result for Type II solutions, where we distinguish the isotropic case from that in which $\nh$ is spacelike. 
	
\begin{theorem}\label{th:typeII}
Let $(M,g,h)$ be a $4$-dimensional solution of the vacuum weighted Einstein field equations \eqref{eq:vacuum-Einstein-field-equations} with harmonic curvature and Ricci operator of Type II. 
\begin{enumerate}
	\item If $g(\nh,\nh)> 0$, then $(M,g)$ is a Kundt spacetime. 
\item If $g(\nh,\nh)=0$, then $\operatorname{Ric}$ is $2$-step nilpotent and $(M,g)$ is a $pp$-wave. Moreover, there exist local coordinates $\{u,v,x_1,x_2\}$ such that
			\[
			g_{ppw}(u,v,x_1,x_2)=2\, du\, dv+F(v,x_1,x_2)\,dv^2+dx_1^2+dx_2^2
			\]
			with $h=h(v)$ and $\Delta_xF=\partial^2_{x_1}F+\partial^2_{x_2}F=\frac{-2h''(v)}{h(v)}$.
		\end{enumerate}
	\end{theorem}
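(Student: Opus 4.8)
The plan is to split the analysis according to the causal character of $\nh$, which is assumed constant. For the isotropic case $g(\nh,\nh)=0$, I would first invoke \cite{Brozos-Mojon} to recall that $\Ric$ is nilpotent, so that the Type II hypothesis forces $\Ric$ to be exactly $2$-step nilpotent (in the frame \eqref{eq:matrices-iso}, this is $\mu=0$, $\nu\neq 0$). Then, by the same result from \cite{Brozos-Mojon}, the underlying Lorentzian manifold is a Brinkmann wave. To upgrade this to a $pp$-wave one needs $R(V^\perp,V^\perp,\cdot,\cdot)=0$, i.e.\ Ricci isotropy (the $pr$-to-$pp$ criterion of \cite{leistner}); since $\Ric$ is $2$-step nilpotent with image spanned by $\nh$, the only non-vanishing Ricci component is $\rho(U,U)=\nu$, so $\Ric(X)=0$ for all $X\in\nh^\perp$, which is exactly Ricci isotropy. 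Hence $(M,g)$ is a $pp$-wave and admits local coordinates $\{u,v,x_1,x_2\}$ in which the metric is $2\,du\,dv+F(v,x_1,x_2)\,dv^2+dx_1^2+dx_2^2$ with $\partial_uF=0$; since $\nh$ is a multiple of the parallel vector field $\partial_u$, the density depends only on $v$, $h=h(v)$. Evaluating the vacuum equation \eqref{eq:vacuum-Einstein-field-equations} (equivalently $G^h=0$) in these coordinates: the only non-trivial curvature contribution is $\rho(\partial_v,\partial_v)=-\tfrac12\Delta_xF$, and since $\tau=0$ here and $\Hes_h$ has only the $(\partial_v,\partial_v)$ component $h''(v)$, the $dv^2$-component of $G^h=0$ reads $h\bigl(-\tfrac12\Delta_xF\bigr)-h''=0$, i.e.\ $\Delta_xF=\partial^2_{x_1}F+\partial^2_{x_2}F=-2h''(v)/h(v)$, which is the stated condition.

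For the non-isotropic case $g(\nh,\nh)>0$, the strategy is to show that the distinguished lightlike vector field $V$ of a Kundt structure can be found, using the geometric information already assembled. By Lemma~\ref{le:split-typeII-spacelike} the manifold splits as $I\times N$ with $g^M=dt^2\oplus g^N$ and $\partial_t=E_1=\nh/|\nh|$, and by Lemma~\ref{lemma:caseII-coordinate-s-eigenvalues} all three Ricci eigenvalues depend only on $t$; Lemma~\ref{lemma:connection-coeffs-typeII} gives the vanishing (or proportionality) of a battery of Christoffel symbols. The natural candidate for the Kundt vector field is $V$ itself (the lightlike eigenvector with $\Ric V=\alpha V$, $g(U,V)=1$), since it is the null direction distinguished by the non-diagonalizable block of $\Ric$. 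I would verify the two Kundt conditions \eqref{eq:condition-K1}: first $\nabla_VV=0$ (geodesic), then $\nabla_XV=\omega(X)V$ for $X\in V^\perp=\operatorname{span}\{V,E_2\}$ (and note $\nabla_{E_1}V$, $\nabla_UV$ must be controlled too — actually Kundt only requires recurrence in $V^\perp$, so one needs $\nabla_VV$, $\nabla_{E_2}V$, and the $V^\perp$-tangent parts handled). From Lemma~\ref{lemma:connection-coeffs-typeII}, since $b\neq 0$ is replaced here by $\alpha\neq\beta$ generically, the equations $g(\nabla_VV,U)=0$, $g(\nabla_VV,E_2)=0$ (when $\alpha\neq\beta$) pin down $\nabla_VV$ to be a multiple of $V$; combined with $\nabla_VV\perp\nh$ (from $\Hes_h$) and $g(\nabla_VV,V)=0$, one gets $\nabla_VV=0$. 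A similar bookkeeping on $\nabla_{E_2}V$ and the mixed brackets gives the recurrence. One then checks the optical scalars (expansion, shear, twist) of $V$ vanish — expansion and shear follow from the connection relations, twist from $\nabla_XV\parallel V$ modulo $V^\perp$ — so $(M,g)$ is Kundt.

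The main obstacle I anticipate is the degenerate sub-case $\alpha=\beta$ in the non-isotropic branch, where the proportionality equations in Lemma~\ref{lemma:connection-coeffs-typeII} collapse and no longer constrain the relevant Christoffel symbols. In that situation one cannot immediately conclude $\nabla_VV=0$ from those identities, and additional input is needed: I would bring in the second Bianchi identity (harmonic curvature $=$ Codazzi Ricci) applied to more index combinations, and the explicit curvature components extracted from \eqref{eq:Rnf}, to force the remaining connection coefficients; alternatively one shows directly that $\alpha=\beta$ combined with $\lambda+2\alpha+\beta=\tau$ constant and the eigenvalue ODEs of Lemma~\ref{lemma:caseII-coordinate-s-eigenvalues} leads back to a Kundt (indeed more special) structure. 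A secondary delicate point is ensuring the Kundt coordinate normal form \eqref{eq:local-coord-kundt-xeral} is actually attained rather than merely the defining condition \eqref{eq:condition-K1}; but this is standard once \eqref{eq:condition-K1} and the vanishing optical scalars are in place (see \cite{chow2010kundt,stephani-et-al}), so no genuinely new work is required there.
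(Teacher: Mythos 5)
Your overall strategy coincides with the paper's, but there is a genuine gap in the isotropic branch. You pass from ``Brinkmann wave with $2$-step nilpotent $\Ric$'' to ``$pp$-wave'' by identifying Ricci isotropy with the condition $R(V^\perp,V^\perp,\cdot,\cdot)=0$; these are not equivalent. The criterion from \cite{leistner} invoked in Section~\ref{subsec:Kundt-pr} says that a \emph{$pr$-wave} (i.e.\ one already satisfying $R(V^\perp,V^\perp,-,-)=0$) is a $pp$-wave iff it is Ricci isotropic; for a general Brinkmann wave, Ricci isotropy only constrains certain traces of the transverse curvature and does not force components such as $R(X_1,X_2,X_1,X_2)$ or $R(X_1,X_2,X_i,U)$ to vanish individually. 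This is precisely where the harmonic curvature hypothesis must enter: using $dP=0$ and $J=0$ in \eqref{eq:Rnf} one gets $R(\nh,X,Y,Z)=\rho\wedge dh(X,Y,Z)$, which kills $R(\nh,U,U,X_1)$ and $R(\nh,X_1,U,X_1)$; feeding these into the Ricci identities $0=\rho(X_1,U)$ and $0=\rho(X_1,X_1)$ then yields $R(X_1,X_2,\cdot,\cdot)=0$ and hence the $pp$-wave conclusion. Without this step your argument never uses harmonic curvature in case (2), which should have been a warning sign. The remaining coordinate computation ($h=h(v)$ and $\Delta_xF=-2h''/h$) is correct.

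In the non-isotropic branch your route is the paper's, and the ``main obstacle'' you anticipate at $\alpha=\beta$ is in fact already resolved by Lemma~\ref{lemma:connection-coeffs-typeII}: its equations are redundant in exactly the right way, e.g.\ $0=(\alpha-\beta)g(\nabla_VU,E_2)+\varepsilon g(\nabla_VV,E_2)$ gives $g(\nabla_VV,E_2)=0$ when $\alpha=\beta$, and likewise the second and ninth identities deliver $g(\nabla_{\nh}V,E_2)=0$ and $g(\nabla_{E_2}V,E_2)=0$ in that case; no extra Bianchi analysis is needed. Two smaller corrections: $V^\perp=\operatorname{span}\{\nh,V,E_2\}$ (not $\operatorname{span}\{V,E_2\}$), so the derivatives to control are $\nabla_VV$, $\nabla_{\nh}V$ and $\nabla_{E_2}V$, while $\nabla_UV$ is irrelevant since $U\notin V^\perp$; and checking optical scalars is superfluous once \eqref{eq:condition-K1} is verified, since that is the definition of Kundt being used.
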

	\begin{proof}
Assume first that $\nh$ is spacelike. We work in the adapted pseudo-orthonor\-mal frame $\mathcal{B}_2$ so that $\Ric$ is given by \eqref{eq:matrices-type-IIandIII}. From Lemma~\ref{lemma:connection-coeffs-typeII}, either if $\alpha=\beta$ or $\alpha \neq \beta$, we have
\[
g(\nabla_VV,U)=0, \quad g(\nabla_VV,E_2)=0.
\]
Moreover, $g(\nabla_VV,V)=0$ (since $g(V,V)=0$), and $g(\nabla_V V,\nh)=-\Hes_h(V,V)=0$. Hence $\nabla_VV=0$ and $V$ is geodesic. 

In what follows we show that $V$ satisfies $\nabla_XV=\omega(X)V$ for some 1-form $\omega$ and $X\perp V$ as in \eqref{eq:condition-K1} to show that the spacetime is Kundt. 
We check directly from Lemma~\ref{lemma:connection-coeffs-typeII} that $ g(\nabla_{E_2}V,E_2)=0$ and $g(\nabla_{\nh}V,E_2)=0$. Furthermore, from the structure of $\Hes_h$, it follows that $g(\nabla_{E_2}V,\nh)=-\Hes_h(V,E_2)=0$, $g(\nabla_{\nh}V,\nh)=-\Hes_h(V,\nh)=0$. Finally, since $V$ is lightlike, $g(\nabla_{\nh}V,V)=g(\nabla_{E_2}V,V)=0$. Hence, for any $X\perp V$, we can write $\nabla_XV=\omega(X)V$ for some 1-form $\omega$ satisfying $\omega(V)=0$, $\omega(\nh)=g(\nabla_{\nh}V,U)$ and $\omega(E_2)=g(\nabla_{E_2}V,U)$.

Now, we consider the isotropic case, i.e. assume $\nh$ is lightlike.	
We consider the pseudo-orthonormal frame $\mathcal{B}_0=\{\nh, U, X_1,X_2\}$ so that $\Ric$ takes the form of \eqref{eq:matrices-iso} with $\mu=0$. Since $\Ric$ is  $2$-step nilpotent, the image of every vector field in the basis vanishes except for $\operatorname{Ric}(U)=\nu \nh$. Moreover, equations \eqref{eq:vacuum-Einstein-field-equations} reduce to $h \rho=\Hes_h$, so we have that $\nabla_{\nh} \nh=\nabla_{X_1}\nh=\nabla_{X_2}\nh=0$, and that $\nabla_U \nh=h\nu \nh$. Therefore, $\nh$ is a recurrent vector field.
		
		We compute
		\[
		\begin{array}{rcl}
			0=\rho(X_1,U)&=&R(X_1,U,U,\nabla h)+R(X_1,X_2,U,X_2), \text{ and}\\
			\noalign{\medskip}
			0=\rho(X_1,X_1)&=&2R(X_1,U,X_1,\nabla h)+R(X_1,X_2,X_1,X_2).
		\end{array}
		\]	
		Since $dP=0$ and $J=0$, we obtain from \eqref{eq:Rnf} that $	R(\nh,X,Y,Z)=\rho\wedge dh(X,Y,Z)$, so
		\[
		\begin{array}{rcl}
			R(\nh,U,U,X_1)&=&\rho\wedge dh(U,U,X_1)=0, \text{ and}\\
			R(\nh,X_1,U,X_1)&=&\rho\wedge dh(X_1,U,X_1)=0.
		\end{array}
		\]
		Hence $R(X_1,X_2,X_2,U)=0$ and $R(X_1,X_2,X_1,X_2)=0$. Analogously, we obtain that $R(X_1,X_2,X_1,U)=0$. Therefore, since $\operatorname{Im}(\operatorname{Ric})$ is isotropic and $R(\mathcal{D}^\perp,\mathcal{D}^\perp)=0$ we conclude that that $(M,g)$ is a $pp$-wave (see \cite{leistner}).
		
		Adopt canonical coordinates for a $pp$-wave so that the metric is given as in \eqref{eq:local-coord-pr-xeral} with $\partial_uF=0$. Then the curvature tensor is harmonic if and only if $\Delta_xF=\partial^2_{x_1}F+\partial^2_{x_2}F=\lambda(v)$ is a function of the coordinate $v$. Moreover, a direct computation shows that the only possibly non-vanishing component of $G^h$ is 
		\[
		G^h(\partial_v,\partial_v)=\frac12\left(-2h''(v) - h(v)\left(\partial^2_{x_1}F+\partial^2_{x_2}F\right)\right).
		\] 
		Hence, from $G^h(\partial_v,\partial_v)=0$, we obtain $\partial^2_{x_1}F+\partial^2_{x_2}F=\frac{-2h''(v)}{h(v)}$.
	\end{proof}

\begin{remark}
	Note, from Theorem~\ref{th:typeII}, that Type II solutions of the vacuum weighted Einstein field equations  with $\nh$ spacelike are Kundt spacetimes where the distinguished lightlike vector field is $V$ in the adapted frame $\mathcal{B}_2=\{\nh,U,V,E_2\}$. Indeed, the covariant derivative of $V$ satisfies
	\[
	\nabla_VV=0, \qquad \nabla_{\nh}V=g(\nabla_{\nh}V,U)V, \qquad \nabla_{E_2}V=g(\nabla_{E_2}V,U)V.
	\]
	Associated to any Kundt spacetime, there exist canonical local coordinates as in \eqref{eq:local-coord-kundt-xeral}. However, not every Kundt spacetime has Ricci operator of Type II.
	
	If $\alpha\neq \beta$, using the conditions of the previous results for Type II solutions, we can obtain more specialized coordinates as follows. From the relations obtained in Lemmas~\ref{le:split-typeII-spacelike}, \ref{lemma:caseII-coordinate-s-eigenvalues} and \ref{lemma:connection-coeffs-typeII} we get that 
	\[
	 \nabla_{\nh}\nh=\tilde{\lambda} \nh, \quad \nabla_{E_2}\nh=\tilde{\beta} E_2, \quad \nabla_{E_2}E_2\parallel \nh,   \text{ and } \nabla_{\nh} E_2=0. 
	\]
	Hence, the distribution $\spanned\{\nh,E_2\}$ is totally geodesic and $\spanned\{U,V\}$ is an integrable distribution, so the splitting in Lemma~\ref{le:split-typeII-spacelike} can be further specialized. Thus, there exist local coordinates $\{t,e_2,u,v\}$ so that $h=h(t)$ and the metric takes the form 
	\[
	g(t,e_2,u,v)= dt^2 + r(t,e_2) de_2^2+2 H(t,e_2,u,v) dv du+ F(t,e_2,u,v) dv^2.
	\]
Working with these local coordinates, a direct computation of the Hessian operator of $h$ shows that the eigenvalues are: $h''$, $\frac{h' \partial_{t}r}{2r}$, and $\frac{h' \partial_t H}{2H}$.
Since the Ricci eigenvalues only depend on the coordinate $s$, so do the eigenvalues of $\hes_h$, which are related by equation \eqref{eq:vacuum-Einstein-field-equations}. Hence, $r$ and $H$ decompose as $r(t,e_2)=r_1(t) r_2(e_2)$ and $H(t,e_2,v,u)=H_0(t) H_1(e_2,v,u)$. Moreover, a direct computation of $G^h$ yields 
\[
G^h(\partial_u,\partial_{e_2})=\frac12 h \left(\partial_{e_2} H_1 \partial_{u} H_1-H_1\partial_{e_2}\partial_{u} H_1\right)/H_1^2.
\]
From where $\partial_{e_2} H_1 \partial_{u} H_1-H_1\partial_{e_2}\partial_{u} H_1=0$, which induces an extra decomposition on the function $H_1$ of the form $H_1(e_2,u,v)=H_2(e_2,v) H_3(u,v)$. Hence, the metric can be written as
	\begin{equation}\label{eq:typeII-remark}
	\begin{array}{rcl}
			g(t,e_2,u,v)&=& dt^2 + r_1(t) r_2(e_2) de_2^2+ F(t,e_2,u,v) dv^2 \\
			\noalign{\medskip}
			&&+ 2 H_0(t)H_2(e_2,v) H_3(u,v) du dv.
	\end{array}
	\end{equation}
Generically, metrics given by \eqref{eq:typeII-remark} have Ricci operator of type II. However, they do not have harmonic Weyl tensor nor do they satisfy the vacuum weighted Einstein field equations  in general.  
\end{remark}

\begin{remark}
	Solutions of dimension 4 realized on the family of pure radiation waves, with the metric given by \eqref{eq:local-coord-pr-xeral}, were studied by the authors in \cite{Brozos-Mojon-prwaves}, and classification results were given both in the general case and for solutions with harmonic curvature. In the latter case, it was proved that isotropic solutions are realized on $pp$-waves and non-isotropic ones, on plane waves. In both instances, they are of Type II. This contrasts with the broader family of Kundt spacetimes, where we can find Type III solutions with harmonic curvature tensor (see Example~\ref{ex:iso-3-step-solution}).
\end{remark}

\subsection{Type III solutions}
In this subsection we consider solutions with Ricci operator of Type III as specified in \eqref{eq:matrices-type-IIandIII} or in \eqref{eq:matrices-iso}, depending on the whether $\nh$ is spacelike or lightlike. We already know that there are no solutions of this kind with $\nh$ timelike. In both possible cases, all solutions are realized on Kundt spacetimes as shown in the following result.

\begin{theorem}\label{th:typeIII}
	Let $(M,g,h)$ be a $4$-dimensional solution of the vacuum weighted Einstein field equations \eqref{eq:vacuum-Einstein-field-equations} with harmonic curvature and Ricci operator of Type III. Then:
	\begin{enumerate} 
	\item If $g(\nh,\nh)> 0$, $(M,g)$ is a Kundt spacetime. Moreover, there exist local coordinates as in \eqref{eq:local-coord-kundt-xeral} where $h=h(v,x_1,x_2)$.
	\item If $g(\nh,\nh)=0$ on an open subset, then $(M,g)$ is a Kundt spacetime. Moreover, the Ricci operator is $3$-step nilpotent and there exist local coordinates as in \eqref{eq:local-coord-kundt-xeral} with $h=h(v)$.
\end{enumerate}
\end{theorem}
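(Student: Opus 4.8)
The plan is to follow the template of the proof of Theorem~\ref{th:typeII}: in each case exhibit a lightlike vector field that is geodesic and recurrent in its orthogonal complement, then read off the shape of $h$ from canonical Kundt coordinates.

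\emph{Case $g(\nh,\nh)>0$.} Work in the adapted pseudo-orthonormal frame $\mathcal{B}_2=\{E_1=\nh/|\nh|,U,V,E_2\}$ with $\Ric$ of Type~III as in~\eqref{eq:matrices-type-IIandIII}. By~\eqref{eq:vacuum-Einstein-field-equations-2} the Hessian operator reads $\hes_h\nh=\tilde\lambda\nh$, $\hes_hU=\tilde\alpha U$, $\hes_hV=\tilde\alpha V+hE_2$, $\hes_hE_2=hU+\tilde\alpha E_2$, with $\tilde\lambda=h(\lambda-2J)$ and $\tilde\alpha=h(\alpha-2J)$. From $\hes_h\nh=\tilde\lambda\nh$ the line field $\spanned\{\nh\}$ is totally geodesic, and computing the $\nh$-components of $[U,V]$, $[U,E_2]$, $[V,E_2]$ shows $\spanned\{U,V,E_2\}$ is integrable, so (as in Lemma~\ref{le:split-typeII-spacelike}) the metric takes the form $dt^2\oplus g^N$ as in~\eqref{eq:decomposition} with $dt=dh/|\nh|$ and $h=h(t)$; then $\lambda=h^{-1}h''+2J$ and, since $\tau=\lambda+3\alpha$ is constant, both $\lambda$ and $\alpha$ depend only on $t$ (the analogue of Lemma~\ref{lemma:caseII-coordinate-s-eigenvalues}). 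The crucial step is then the analogue of Lemma~\ref{lemma:connection-coeffs-typeII}: evaluating the Codazzi identity $(\nabla_X\rho)(Y,Z)=(\nabla_Y\rho)(X,Z)$ on the frame triples, together with the fact that $\Hes_h$ controls the $\nh$-components of the $\nabla_XY$, should force enough connection coefficients to vanish that the null eigenvector $U$ satisfies $\nabla_UU=0$ and $\nabla_XU=\omega(X)U$ for every $X\in U^\perp=\spanned\{\nh,U,E_2\}$. Thus $(M,g)$ is a Kundt spacetime with distinguished lightlike field $U$; taking coordinates~\eqref{eq:local-coord-kundt-xeral} with $\partial_u$ proportional to $U$ gives $\partial_uh=dh(\partial_u)=g(\nh,\partial_u)=0$ because $g(E_1,U)=0$, whence $h=h(v,x_1,x_2)$.

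\emph{Case $g(\nh,\nh)=0$.} Here $\Ric$ is nilpotent \cite{Brozos-Mojon}, Type~III corresponds to $\mu\neq0$ in~\eqref{eq:matrices-iso} (equivalently $\Ric$ is $3$-step nilpotent), and $3$-step nilpotent isotropic solutions are Kundt spacetimes by \cite{Brozos-Mojon}. For the density, note $\tau=0$, so~\eqref{eq:vacuum-Einstein-field-equations} reads $\Hes_h=h\rho$; in the frame $\mathcal{B}_0=\{\nh,U,X_1,X_2\}$ of~\eqref{eq:matrices-iso} one reads off $\nabla_{\nh}\nh=0$ and $\nabla_X\nh\parallel\nh$ for every $X\in\nh^\perp=\spanned\{\nh,X_1,X_2\}$, so $\nh$ is itself a distinguished Kundt field. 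Choosing coordinates~\eqref{eq:local-coord-kundt-xeral} adapted to $\nh$ (so $\partial_u\propto\nh$), the form of the metric gives $dh=\nh^\flat$ proportional to $dv$, and since $dh$ is closed its coefficient depends only on $v$; hence $h=h(v)$.

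I expect the main obstacle to be the Codazzi bookkeeping in the spacelike case: compared with Type~II the null Jordan block is one size larger, so there are strictly more connection coefficients to pin down and one must also separate the subcase $\lambda=\alpha$, in which $\Ric$ is a scalar plus a single $3$-step nilpotent block. Everything after the Kundt structure has been established is routine manipulation of the canonical coordinates.
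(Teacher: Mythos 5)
Your strategy coincides with the paper's: in the spacelike case, single out the null eigenvector $U$ at the bottom of the Jordan chain and show it is geodesic and recurrent in $U^\perp$; in the isotropic case, quote \cite{Brozos-Mojon} for the Kundt structure and read off $h=h(v)$ from the coordinates. The isotropic case and both coordinate arguments for $h$ are correct as written (your observation that $\hes_h=h\Ric$ makes $\nh$ itself the distinguished Kundt field, and that $dh=c\,dv$ closed forces $c=c(v)$, is exactly the ``direct computation'' the paper invokes). Your preliminary splitting $g=dt^2\oplus g^N$, used to conclude that $\lambda$ and $\alpha$ depend only on $t$, is a valid and slightly cleaner alternative to the paper's direct Codazzi computation of $U(\lambda)$, $V(\lambda)$, $E_2(\lambda)$.

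The gap is in the step you yourself flag: you assert that the Codazzi identities ``should force enough connection coefficients to vanish'' to make $U$ geodesic and recurrent on $U^\perp$, but this is precisely the content of part (1) and it is where all the work lies. Concretely, one must show that $g(\nabla_UU,V)$, $g(\nabla_UU,E_2)$, $g(\nabla_{E_2}U,E_2)$ and $g(\nabla_{\nh}U,E_2)$ all vanish (the $\nh$- and $U$-components are already controlled by $\Hes_h$ and the nullity of $U$). The first three each follow from a single Codazzi identity; for instance $(\nabla_U\rho)(U,V)=(\nabla_V\rho)(U,U)$ gives $g(\nabla_UU,E_2)=0$ because the off-diagonal entry of the Jordan block converts $\rho$-terms into $g$-terms. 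But $g(\nabla_{\nh}U,E_2)=0$ does not come from one identity: the paper must extract two independent expressions for $\nabla h(\alpha)$, namely $\nabla h(\alpha)=g(\nabla_{\nh}U,E_2)+\tfrac{h}{3}(\alpha-\lambda)\lambda$ from $(\nabla_{\nh}\rho)(U,V)=(\nabla_V\rho)(\nh,U)$ and $\nabla h(\alpha)=-2g(\nabla_{\nh}U,E_2)+\tfrac{h}{3}(\alpha-\lambda)\lambda$ from $(\nabla_{\nh}\rho)(E_2,E_2)=(\nabla_{E_2}\rho)(\nh,E_2)$, and subtract them. Until this is carried out, the Kundt structure in case (1) is unproven. (Incidentally, no case split on $\lambda=\alpha$ is needed: the cancellation above works identically in that subcase.)
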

\begin{proof}
We assume $(M,g,h)$ is a $4$-dimensional solution with harmonic curvature and Ricci operator of Type III. We assume first that $g(\nh,\nh)> 0$.  
According to \eqref{eq:matrices-type-IIandIII} and taking into account Lemma~\ref{lemma:nh-Ricci-eigen}, there exists a suitable adapted frame  $\mathcal{B}_2=\{\nh,U,V,E_2\}$ on which the Ricci operator is given by $\Ric \nh=\lambda \nh$, $\Ric U=\alpha U$, $\Ric V=\alpha V+E_2$ and $\Ric E_2=\alpha E_2+U$. The treatment of solutions of this type is  similar to that of the previous case. However, proving that the gradient of the Ricci eigenvalues has no component in $\nh^\perp$ is simpler. Indeed, we have
\[
\begin{array}{rcl}
	(\nabla_{\nh} \rho)(U,\nh)&=&\nh(\rho(U,\nh))-\rho(\nabla_{\nh}U,\nh)-\rho(U,\nabla_{\nh}\nh) \\
	\noalign{\smallskip}
	&=&-\lambda g(\nabla_{\nh}U,\nh)-\alpha g(U,\nabla_{\nh}\nh)=0,
\end{array}
\]
where we have used $\nabla_{\nh}\nh=h(\lambda-2J)\nh$ and $g(U,\nh)=0$. Now, we can write
\[
\begin{array}{rcl}
	(\nabla_{U} \rho)(\nh,\nh)&=&U(\rho(\nh,\nh))-2\rho(\nabla_{U}\nh,\nh) \\
	\noalign{\smallskip}
	&=&U(\lambda)|\nh|^2+\lambda U(g(\nh,\nh)) -2 \lambda g(\nabla_{U}\nh,\nh)\\
	\noalign{\smallskip}
	&=&U(\lambda)|\nh|^2.
\end{array}
\]
Since the Ricci tensor is Codazzi and $|\nh|^2>0$, it follows that $U(\lambda)=0$. Moreover, $\tau=\lambda+3\alpha$ is constant, so $U(\alpha)=0$. We can similarly compute the analogous covariant derivatives for $V$ and $E_2$ instead of $U$, yielding $V(\lambda)=V(\alpha)=0$ and $E_2(\lambda)=E_2(\alpha)=0$.

%

In order to show that $(M,g)$ is a Kundt spacetime, we are going to see that the lightlike vector $U$ satisfies
	\[
		\nabla_UU=0, \qquad \nabla_{\nh}U=g(\nabla_{\nh}U,V)U, \qquad \nabla_{E_2}U=g(\nabla_{E_2}U,V)U.
	\]
	The process of computing the covariant derivatives of the Ricci tensor and applying the Codazzi condition is the same as in previous instances, so we omit details. First, we compute $(\nabla_{U} \rho)(U,V)=-g(\nabla_{U}U,E_2)$ and $(\nabla_{V} \rho)(U,U)=0$, so $g(\nabla_{U}U,E_2)=0$. Similarly, we have $(\nabla_{U} \rho)(E_2,V)=g(\nabla_{U}U,V)$ and $(\nabla_{V} \rho)(E_2,U)=0$, so $g(\nabla_{U}U,V)=0$. The component in the direction of $\nh$ is easier to compute: $g(\nabla_UU,\nh)=-\Hes_h(U,U)=0$. Similarly, since $U$ is lightlike, it is immediate that $g(\nabla_UU,U)=0$. Thus, we have proved that $\nabla_UU=0$.

Now, we need to compute $\nabla_{\nh}U$ and $\nabla_{E_2}U$. For the first derivative, we can write $g(\nabla_{\nh}U,U)=0$ and $g(\nabla_{\nh}U,\nh)=-\Hes_h(U,\nh)=0$. Therefore, we only need to determine the component given by  $g(\nabla_{\nh}U,E_2)$.  To that end, consider the covariant derivatives $(\nabla_{\nh} \rho)(U,V)=\nh(\alpha)-g(\nabla_{\nh}U,E_2)$ and $(\nabla_{V} \rho)(\nh,U)=h(\alpha-2J)(\lambda-\alpha)$. Moreover, $h(\alpha-2J)=-\frac{1}3h\lambda$ so, by the Codazzi condition on the Ricci tensor, we have
 \begin{equation}\label{eq:nlambda1}
 	\nabla h(\alpha)=g(\nabla_{\nh}U,E_2)+\frac{h}3(\alpha-\lambda)\lambda.
\end{equation}
Alternatively, we can compute $(\nabla_{\nh} \rho)(E_2,E_2)=\nh(\alpha)+2g(E_2,\nabla_{\nh}U)$ and $(\nabla_{E_2} \rho)(\nh,E_2)=\frac{h}3(\lambda-\alpha)\lambda$, so
 \begin{equation}\label{eq:nlambda2}
	\nabla h(\alpha)=-2g(\nabla_{\nh}U,E_2)+\frac{h}3(\alpha-\lambda)\lambda.
\end{equation}
Combining \eqref{eq:nlambda1} and \eqref{eq:nlambda2}, it follows that $g(\nabla_{\nh}U,E_2)=0$.

Finally, for the derivative $\nabla_{E_2}U$, since $g(\nabla_{E_2}U,\nh)=-\Hes_h(E_2,U)=0$, we only need to compute $g(\nabla_{E_2}U,E_2)$. To that end, we use the fact that $(\nabla_{E_2}\rho)(V,U)=g(E_2,\nabla_{E_2}U)=(\nabla_{V}\rho)(E_2,U)=0$.

Thus, we can write $\nabla_{\nh}U=g(\nabla_{\nh}U,V)U$ and $\nabla_{E_2}U=g(\nabla_{E_2}U,V)U$. In summary, we have $\nabla_UU=0$ and that, for every $X\perp U$, $\nabla U=\omega\otimes U$, for the 1-form $\omega$ defined in $U^\perp$, and given by $\omega(U)=0$, $\omega(\nh)=g(\nabla_{\nh}U,V)$ and $\omega(E_2)=g(\nabla_{E_2}U,V)$. By the characterization given by \eqref{eq:condition-K1}, the underlying manifold $(M,g)$ is a Kundt spacetime where $U$ is the distinguished lightlike vector field.

Additionally, in local coordinates \eqref{eq:local-coord-kundt-xeral}, the distinguished lightlike geodesic vector field is $\partial_u$, which is orthogonal to $\nh$. Hence $h=h(v,x_1,x_2)$ and Theorem~\ref{th:typeIII}~(1) follows.

Now, if $g(\nh,\nh)=0$ on an open subset, then the Kundt character and the nilpotency of $\Ric$ follow from \cite{Brozos-Mojon}. In local coordinates as in \eqref{eq:local-coord-kundt-xeral}, since the distinguished geodesic lightlike vector field in these coordinates is $\partial_u$, we have that $\nh\parallel \partial_u$, so a direct computation yields $h(u,v,x_1,x_2)=h(v)$.
\end{proof}

\begin{example}\label{ex:iso-3-step-solution}
The structure of isotropic solutions with $3$-step nilpotent Ricci operator is very rigid. However, we can build examples for any arbitrary nowhere constant density function $h=h(v)$. Consider the Kundt metric given by
\[
g=dv\left(2du+F(u,v,x)dv+\omega(u,v,x)dx_1\right)+g(v,x)(dx_1^2+dx_2^2),
\]
with $F(u,v,x_1,x_2)=\frac{u^2 h(v)^4}{C x_1^2}-\frac{12 C x_1^2 (\log (x_1)-1) h'(v)^2}{h(v)^6}$, $\omega(u,v,x_1,x_2)=-\frac{3 C x_1 h'(v)}{h(v)^5}-\frac{2 u}{x_1}$ and $g(v,x_1,x_2)=\frac{C}{h(v)^4}$, $C\neq 0$.
This is a solution for the vacuum weighted Einstein field equations  where
$\Ric$ has $3$-step nilpotent Ricci operator and harmonic Weyl tensor, but it is not locally conformally flat since the Weyl tensor does not vanish (for example, $W(\partial_u,\partial_v,\partial_v,\partial_{x_1})=\frac{3 h'(v)}{2 x_1 h(v)}\neq 0$).

\end{example}

After analyzing solutions of all four admissible types, we are finally in a position to prove the complete statement of Theorem~\ref{th:classification-harmonic}.

\subsection{Proof of Theorem~\ref{th:classification-harmonic}}

Let $(M,g,h)$ be a $4$-dimensional solution of the vacuum weighted Einstein field equations~\eqref{eq:vacuum-Einstein-field-equations} such that $(M,g)$ has harmonic curvature tensor (not locally conformally flat). Additionally, assume that $\Ric$ does not change type in $M$. 

For Type I.a (diagonalizable) solutions, applying Theorem~\ref{th:classification-diagonalizable} in the non-isotropic case gives Theorem~\ref{th:classification-harmonic}~(1), whereas Ricci-flat isotropic solutions fall into Theorem~\ref{th:classification-harmonic}~(2)(a). 

In the non-diagonalizable case, we first use Theorem~\ref{th:non-existence-complex} to prove that there are no solutions of Type I.b., so all remaining admissible solutions are of Type II or Type III. Hence, we apply Theorems~\ref{th:typeII} and \ref{th:typeIII} to complete Theorem~\ref{th:classification-diagonalizable}~(2)(a) in the isotropic case, and Theorem~\ref{th:classification-diagonalizable}~(2)(b) in the non-isotropic case.


\section{Conclusions}\label{sec:conclusions}

We have studied solutions to the vacuum weighted Einstein field equations~\eqref{eq:vacuum-Einstein-field-equations} under two distinct geometric conditions: local conformal flatness and the less restrictive condition of harmonic curvature tensor. In the latter case, we have focused on 4-dimensional solutions. In general, solutions of equation~\eqref{eq:vacuum-Einstein-field-equations} are realized on families of manifolds typically present in the context of General Relativity (see Theorems~\ref{th:loc-conf-flat-ndim} and \ref{th:classification-harmonic}). The analysis we have carried out illustrates some important aspects:
\begin{itemize}
	\item The vacuum weighted Einstein field equations~\eqref{eq:vacuum-Einstein-field-equations} were already introduced in \cite{Brozos-Mojon} by finding suitable analogues to the characterizing properties of the usual Einstein tensor (see \cite{Lovelock}). However, in this work, we discussed a variational approach that yields the same field equations, giving further motivation for their definition by characterizing solutions as critical points of a restricted variation of the weighted Einstein-Hilbert functional~\eqref{eq:HE-action-weighted}.
	\item Locally conformally flat non-isotropic solutions, as well as 4-dimensional non-isotropic solutions with harmonic curvature and diagonalizable Ricci operator, present similar geometric features as those of vacuum static spaces in Riemannian signature (see \cite{Kobayashi} and \cite{Kim-Shin} respectively). Namely, they are realized on specific warped  or direct products (see Theorems~\ref{th:loc-conf-flat-ndim}~(1) and \ref{th:classification-diagonalizable}) in a way analogous to that of the Riemannian case. However, types of solutions not present in Riemannian signature arise as a consequence of working with a Lorentzian metric. Indeed, isotropic solutions are purely Lorentzian, but non-isotropic solutions with non-diagonalizable Ricci operator also appear. Remarkably, both isotropic and non-diagonalizable, non-isotropic solutions with harmonic curvature are realized on Kundt spacetimes (see Theorem~\ref{th:classification-harmonic} and also Theorems~\ref{th:typeII} and \ref{th:typeIII}).
	\item Although the geometry of isotropic solutions is quite rigid, in the sense that all of them are Kundt spacetimes, different assumptions on the Weyl tensor have an impact on the admissible geometries. For example, locally conformally flat solutions are realized on very specific plane waves (see Theorem~\ref{th:loc-conf-flat-ndim}~(2)). In contrast, the condition of harmonic curvature allows for more flexibility, including $pp$-waves which are not plane waves (see Theorems~\ref{th:typeII} and \ref{th:typeIII}) and Kundt spacetimes of Type III (see Example~\ref{ex:iso-3-step-solution}.
	\item Although both the geometry and the density function are determined in the diagonalizable case (see Theorem~\ref{th:classification-diagonalizable}), there is some flexibility for the choice of the density in other cases. For example, it is possible to build locally conformally flat solutions on plane waves for an arbitrary one-variable density function (Theorem~\ref{th:loc-conf-flat-ndim}). Also, Theorems~\ref{th:typeII} and \ref{th:typeIII} illustrate this freedom for the building of solutions with a prescribed density function in manifolds with harmonic curvature, as it does Example~\ref{ex:iso-3-step-solution}.
\end{itemize}


\appendix

\section{Complement to the proof of Theorem~\ref{th:classification-diagonalizable}}\label{appendix}

This section treats the case with three different eigenvalues corresponding to Section~\ref{sec:4-dim-diagonal}. We adopt the notation in that section and assume that the three eigenvalues $\lambda_2$, $\lambda_3$ and $\lambda_4$ are distinct. We are going to see in Lemma~\ref{lemma:all-eigen-different} below that this case is not admissible. The proof is essentially a Lorentzian analogue to that given in \cite{Kim-Shin}. Let $\varepsilon_1=-\varepsilon_2=\varepsilon$ and $\varepsilon_3=\varepsilon_4=1$. From equation \eqref{eq:connection-coefs}
we know that
\begin{equation}\label{eq:connection-coefs-4dim}
\begin{array}{rcl}
	\nabla_{E_1}E_1&=&0, \quad \nabla_{E_i}E_1=\beta_iE_i, \quad \nabla_{E_i}E_i=-\varepsilon\varepsilon_i\beta_i E_1, \\
	\noalign{\medskip}
	\nabla_{E_1}E_i&=&0, \quad \nabla_{E_i}E_j=\varepsilon_k \Gamma_{ijk}E_k,
\end{array}
\end{equation}
where $\{i,j,k\}=\{2,3,4\}$. 
From these relations, we obtain the expression of the Ricci eigenvalues as follows.

\begin{lemma}\label{lemma:Ricci-components}
	Let $(M,g,h)$ be a 4-dimensional non-isotropic solution of the weighted Einstein field equations  \eqref{eq:vacuum-Einstein-field-equations} with harmonic curvature tensor, such that the Ricci operator diagonalizes in the adapted local frame $\mathcal{B}_1=\{E_1,\dots,E_4\}$ and the eigenvalues $\lambda_2$, $\lambda_3$ and $\lambda_4$ are pairwise distinct. Then, they take the following forms:
	\[
		\begin{array}{rcl}
			-\varepsilon \lambda_2&=&\beta_2^2+\varepsilon \beta_2' +\beta_2\beta_{3}+\beta_2\beta_{4}-2\Gamma_{342}\Gamma_{432},	\\
			\noalign{\medskip}
			-\varepsilon \lambda_3&=&\beta_3^2+\varepsilon \beta_3'+\beta_{3}\beta_2+\beta_3\beta_{4}+2\frac{\beta_2-\beta_4}{\beta_3-\beta_4}\Gamma_{342}\Gamma_{432}, \\
			\noalign{\medskip}
			-\varepsilon \lambda_4&=&\beta_4^2+\varepsilon\beta_4'+\beta_4\beta_{2}+\beta_4\beta_{3}+2\frac{\beta_2-\beta_3}{\beta_4-\beta_3}\Gamma_{342}\Gamma_{432}.
		\end{array}
	\]
\end{lemma}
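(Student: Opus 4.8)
The plan is to compute the relevant components of the curvature tensor directly from the connection coefficients \eqref{eq:connection-coefs-4dim}, and then read off the Ricci eigenvalues as sums of sectional-type curvatures. Since $\Ric E_i=\lambda_i E_i$ in the adapted frame, we have for each $i\in\{2,3,4\}$ the identity
\[
\lambda_i=\sum_{a\neq i}\varepsilon_a R(E_i,E_a,E_i,E_a)\,\varepsilon_i^{-1},
\]
so everything reduces to evaluating the four curvature quantities $R(E_i,E_1,E_i,E_1)$ and $R(E_i,E_j,E_i,E_j)$ for the relevant index pairs, using $R(X,Y)Z=\nabla_{[X,Y]}Z-[\nabla_X,\nabla_Y]Z$ together with \eqref{eq:connection-coefs-4dim}.

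\textbf{Key steps.} First I would compute $R(E_i,E_1,E_i,E_1)$. Using $\nabla_{E_1}E_1=0$, $\nabla_{E_1}E_i=0$, $\nabla_{E_i}E_1=\beta_iE_i$ and $\nabla_{E_i}E_i=-\varepsilon\varepsilon_i\beta_iE_1$, together with $[E_i,E_1]=-\beta_iE_i$ (since $\nabla_{E_1}E_i=0$), a short calculation gives a contribution of the form $-\varepsilon\varepsilon_i(\beta_i^2+\varepsilon E_i(\beta_i)/\varepsilon? )$; more precisely one gets $E_1(\beta_i)$ terms, and here one invokes Lemma~\ref{lemma:coordinate-s-eigenvalues}, which guarantees $E_i(\beta_j)=0$ for $i,j>1$, so only the $E_1$-derivative $\beta_i'$ survives (with $E_1=\varepsilon\partial_t$, hence the factor $\varepsilon\beta_i'$). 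Second, I would compute the mixed terms $R(E_i,E_j,E_i,E_j)$ for $i,j\in\{2,3,4\}$, $i\neq j$. Here the Lie bracket $[E_i,E_j]=\sum_{k}\varepsilon_k(\Gamma_{ijk}-\Gamma_{jik})E_k$ enters, and by the structure of \eqref{eq:connection-coefs-4dim} the only nonzero off-diagonal Christoffel symbols among $\{2,3,4\}$ are $\Gamma_{ijk}$ with all three indices distinct. Expanding $[\nabla_{E_i},\nabla_{E_j}]E_i$ produces the $\beta_i\beta_j$ cross terms (from the $E_1$-components $\nabla_{E_i}E_i=-\varepsilon\varepsilon_i\beta_iE_1$ feeding into $\nabla_{E_j}$) and a $\Gamma\cdot\Gamma$ term. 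The bookkeeping of which $\Gamma$'s appear, and collecting them into the symmetric combination $\Gamma_{342}\Gamma_{432}$, is the delicate part: one uses the antisymmetry $\Gamma_{ijk}=-\Gamma_{ikj}$ and the relations among the $\Gamma$'s forced by the Codazzi equation \eqref{eq:rel-eigen} (analogous to how the $\Gamma_{ijk}$ with two equal eigenvalue indices were pinned down in Lemma~\ref{lemma:eigen-integrable}, but now with all eigenvalues distinct) to express $\Gamma_{243},\Gamma_{234},\Gamma_{324},\Gamma_{342},\Gamma_{423},\Gamma_{432}$ in terms of a single scalar, yielding the rational coefficients $\frac{\beta_2-\beta_4}{\beta_3-\beta_4}$ etc. Summing the three curvature contributions for each $\lambda_i$ and multiplying by $\varepsilon_i^{-1}$ (which, with $\varepsilon_1=-\varepsilon_2=\varepsilon$, $\varepsilon_3=\varepsilon_4=1$, produces the overall $-\varepsilon$ on the left) gives the claimed formulas.

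\textbf{Main obstacle.} The genuinely hard part is the second step: tracking the $\Gamma_{ijk}$ terms and establishing, from \eqref{eq:rel-eigen} with $\lambda_2,\lambda_3,\lambda_4$ pairwise distinct, the precise linear relations that collapse all the surviving Christoffel symbols onto the single product $\Gamma_{342}\Gamma_{432}$ with the stated rational coefficients involving the $\beta_i$. This requires carefully applying the Codazzi identity \eqref{eq:rel-eigen} with indices $i,j,k$ all in $\{2,3,4\}$ and all distinct — which was not needed in Lemma~\ref{lemma:eigen-integrable} — to solve the resulting small linear system for the $\Gamma$'s. The $E_1$-derivative terms, by contrast, are routine once Lemma~\ref{lemma:coordinate-s-eigenvalues} is invoked to kill the transverse derivatives of $\beta_i$, and the $\beta_i^2$ and $\beta_i\beta_j$ terms come out mechanically from the $E_1$-components of the connection.
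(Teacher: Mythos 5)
Your proposal is correct and follows essentially the same route as the paper: compute $R_{1i1i}$ and $R_{jiji}$ from the connection coefficients \eqref{eq:connection-coefs-4dim}, sum them to obtain the Ricci eigenvalues, and use the Codazzi relations $(\beta_j-\beta_k)\Gamma_{ijk}=(\beta_i-\beta_k)\Gamma_{jik}$ (equivalent to your \eqref{eq:rel-eigen} with all indices distinct, since $\beta_i-\beta_j$ is proportional to $\lambda_i-\lambda_j$) together with $\Gamma_{ijk}=-\Gamma_{ikj}$ to rewrite $\Gamma_{243}\Gamma_{423}$ and $\Gamma_{234}\Gamma_{324}$ as the stated rational multiples of $\Gamma_{342}\Gamma_{432}$. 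The only cosmetic difference is that the paper obtains these linear relations from the Codazzi character of $\Hes_h$ rather than of $\rho$, which is the same system you describe.
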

\begin{proof}
	On the one hand, from \eqref{eq:connection-coefs-4dim} and for $i,j\neq 1$, we compute
	\[
	\begin{array}{rcl}
		\nabla_{[E_j,E_i]}E_i&=&\nabla_{\nabla_{E_j}E_i}E_i-\nabla_{\nabla_{E_i}E_j}E_i =\varepsilon_k \varepsilon_j(\Gamma_{j ik}-\Gamma_{ijk})\Gamma_{kij}E_j, \\
		\noalign{\medskip}
		\nabla_{E_j}\nabla_{E_i}E_i&=&-\varepsilon\varepsilon_i\beta_i \nabla_{E_j}E_1=-\varepsilon\varepsilon_i\beta_i\beta_{j}E_j, \\
		\noalign{\medskip}
		\nabla_{E_i}\nabla_{E_j}E_i&=&\varepsilon_k\nabla_{E_i}(\Gamma_{jik}E_k) =\varepsilon_kE_i(\Gamma_{jik})E_k +\varepsilon_k \varepsilon_j\Gamma_{jik}\Gamma_{i kj}E_j,
	\end{array}
	\]
	while, on the other hand,
	\[
	\begin{array}{rcl}
		\nabla_{[E_1,E_i]}E_1&=&\nabla_{\nabla_{E_1}E_i}E_1-\nabla_{\nabla_{E_i}E_1}E_1 =-\beta_i^2E_i, \\
		\noalign{\medskip}
		\nabla_{E_1}\nabla_{E_i}E_1&=&\nabla_{E_1}(\beta_iE_i)=\varepsilon \beta_i'E_i, \\
		\noalign{\medskip}
		\nabla_{E_i}\nabla_{E_1}E_1&=&0.
	\end{array}
	\]
	Hence, we have the curvature components given by
	\[
		\begin{array}{rcl}
			R_{j i j i}&=&-\varepsilon\varepsilon_i\varepsilon_j\beta_i\beta_j+\varepsilon_k\{(\Gamma_{ijk}-\Gamma_{j ik})\Gamma_{kij}-\Gamma_{jik}\Gamma_{i kj}\},  \\
			\noalign{\medskip}
			R_{1i1j}&=&-\varepsilon_i(\varepsilon\beta_i'+ \beta_i^2)\delta_{ij}, 
		\end{array}
	\]
	so the components of the Ricci tensor take the form:
	\[
	\begin{array}{rcl}
		-\varepsilon\lambda_2=\rho_{22}&=&\varepsilon R_{1212}+ R_{3232}+R_{4242} \\
		\noalign{\medskip}
		&=&\varepsilon \beta_2'+\beta_2^2+\beta_2\beta_{3}+\beta_2\beta_{4}\\
		\noalign{\medskip}
		&&+(\Gamma_{234}-\Gamma_{324})\Gamma_{423}-\Gamma_{324}\Gamma_{2 43}+(\Gamma_{243}-\Gamma_{4 23})\Gamma_{324}-\Gamma_{423}\Gamma_{2 34} \\
		\noalign{\medskip}
		&=& \beta_2^2+\varepsilon \beta_2' +\beta_2\beta_{3}+\beta_2\beta_{4}-2 \Gamma_{342}\Gamma_{432},
	\end{array}
	\]
	and
	\[
	\begin{array}{rcl}
		\lambda_3=\rho_{33}&=&\varepsilon R_{1313}-\varepsilon R_{2323}+R_{4343} \\
		\noalign{\medskip}
		&=&-\varepsilon \beta_3^2- \beta_3'-\varepsilon(\beta_2\beta_3+\beta_4\beta_3)+ 2\varepsilon\Gamma_{243}\Gamma_{423}   \\
		\noalign{\medskip}
		&=& -\varepsilon (\beta_3^2+\varepsilon \beta_3'+\beta_{3}\beta_2+\beta_3\beta_{4}-2\Gamma_{243}\Gamma_{423}),
	\end{array}
	\]
	where we have used that $\Gamma_{jki}=-\Gamma_{jik}$. The computation of $\lambda_4$ is analogous to the previous one. Now, since the Ricci tensor is Codazzi, so is $\Hes$ by equation~\eqref{eq:vacuum-Einstein-field-equations}. Hence, we get to the relations $(\beta_j-\beta_k)\Gamma_{ijk}=(\beta_i-\beta_k)\Gamma_{jik}$. With these and  $\Gamma_{jki}=-\Gamma_{jik}$, we have 
	\[
	\Gamma_{243}\Gamma_{423}=-\frac{\beta_2-\beta_4}{\beta_3-\beta_4}\Gamma_{342}\Gamma_{432}, \qquad \Gamma_{234}\Gamma_{324}=-\frac{\beta_2-\beta_3}{\beta_4-\beta_3}\Gamma_{342}\Gamma_{432},
	\]
	from where the result follows.
\end{proof}

\begin{lemma}\label{lemma:all-eigen-different}
	Let $(M,g,h)$ be a 4-dimensional non-isotropic solution of the weighted Einstein field equations \eqref{eq:vacuum-Einstein-field-equations} with harmonic curvature tensor, such that the Ricci operator diagonalizes in the adapted local frame $\mathcal{B}_1=\{E_1,\dots,E_4\}$. Then, the eigenvalues $\lambda_2$, $\lambda_3$ and $\lambda_4$ cannot be pairwise distinct.
\end{lemma}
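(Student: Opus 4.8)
The plan is to derive a contradiction from the assumption that $\lambda_2,\lambda_3,\lambda_4$ are pairwise distinct by exploiting the rigidity forced by the Codazzi equation and the weighted Einstein field equations on the connection coefficients $\Gamma_{ijk}$ with $i,j,k\in\{2,3,4\}$. From Lemma~\ref{lemma:eigen-integrable} and the specialization \eqref{eq:connection-coefs-4dim} we already know that each eigendistribution is one-dimensional here, so the only a priori surviving off-diagonal coefficients among $\{E_2,E_3,E_4\}$ are the six $\Gamma_{ijk}$ with all indices distinct; the Codazzi relations $(\beta_j-\beta_k)\Gamma_{ijk}=(\beta_i-\beta_k)\Gamma_{jik}$ (valid since $\hes_h$ is also Codazzi by \eqref{eq:vacuum-Einstein-field-equations}) tie these together, so effectively there is a single free function, which one may take to be $\Gamma_{342}\Gamma_{432}$ together with the $\beta_i=\beta_i(t)$. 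Lemma~\ref{lemma:Ricci-components} already records the three Ricci eigenvalues in terms of these data.

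First I would write down the remaining integrability/curvature identities that have not yet been used. Concretely: compute the off-diagonal Ricci components $\rho_{ij}=0$ for $i\neq j\in\{2,3,4\}$ (these vanish because $\Ric$ is diagonal in $\mathcal B_1$), and the Codazzi equations $(\nabla_{E_1}\rho)(E_i,E_j)=(\nabla_{E_i}\rho)(E_1,E_j)$ for $i\neq j$. Using \eqref{eq:connection-coefs-4dim} and the fact (Lemma~\ref{lemma:coordinate-s-eigenvalues}) that all $\lambda_i$, hence all $\beta_i$, depend only on $t$, these produce first-order ODEs in $t$ for the product $\Pi:=\Gamma_{342}\Gamma_{432}$ and algebraic constraints linking $\Pi$, the $\beta_i$ and their $t$-derivatives. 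In parallel, the Jacobi identity among $E_2,E_3,E_4$ and the identity $E_1(\Gamma_{ijk})$ (mirroring Lemma~\ref{le:Eigamma}) should force $E_i(\Pi)=0$ for $i=2,3,4$, so that $\Pi=\Pi(t)$ as well. At that point everything is a system of ODEs in $t$ in the unknowns $\beta_2,\beta_3,\beta_4,\Pi$ (the scalar curvature $\tau=\lambda_1+\lambda_2+\lambda_3+\lambda_4$ being a constant gives one more relation, together with $\lambda_1=\varepsilon h^{-1}h''+2J$).

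The decisive step is then purely algebraic/differential: differentiate the three expressions in Lemma~\ref{lemma:Ricci-components}, substitute the ODEs for $\beta_i'$ and $\Pi'$ obtained above, and use the remaining curvature symmetries (for instance equating the two ways of computing $R_{1ij1}$-type components, or $(\nabla_{E_i}\rho)(E_j,E_k)=(\nabla_{E_j}\rho)(E_i,E_k)$ for distinct $i,j,k$) to generate enough polynomial relations among $\beta_2,\beta_3,\beta_4,\Pi$ to over-determine the system. I expect that eliminating $\Pi$ and the derivatives leaves a polynomial identity in $\beta_2,\beta_3,\beta_4$ (and possibly $\tau$) which, given that the $\beta_i$ are genuinely distinct, cannot hold — exactly as in the complex case one can feed the resulting polynomials to a Gröbner basis computation and read off an impossible relation such as $\Pi^2\cdot(\text{positive})=0$ or $\sum(\beta_i-\beta_j)^2=0$. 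This mirrors the Riemannian argument in \cite{Kim-Shin}.

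The main obstacle will be the bookkeeping in the algebraic elimination: one must be careful that the relations extracted are genuinely independent and that no division by a quantity that could vanish (e.g.\ $\beta_i-\beta_j$, or $\Pi$ itself) is performed without splitting into cases. In particular the sub-case $\Pi=\Gamma_{342}\Gamma_{432}=0$ has to be handled separately — there the distribution $\operatorname{span}\{E_2,E_3,E_4\}$ decomposes further, each $E_i$-line becomes totally geodesic inside it, and one should show directly (via $\nabla_{E_i}E_j=0$ for $i\neq j$ together with the ODEs for the $\beta_i$) that the metric splits as a multiply warped product $I\times_{\varphi_2}I_2\times_{\varphi_3}I_3\times_{\varphi_4}I_4$, at which point the argument of Lemma~\ref{lemma:mult-warping-constant} (or a direct computation of $G^h$) collapses two of the warping functions and one ends up either flat or with coinciding eigenvalues — contradicting the hypothesis. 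Once both the $\Pi\neq0$ and $\Pi=0$ branches are closed, the lemma follows.
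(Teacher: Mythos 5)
Your setup is right (the $\beta_i$ and the products $\Gamma_{ijk}$ with distinct indices are indeed the only surviving data, tied together by the Codazzi relations), but the decisive step is missing: you never actually produce the contradiction, you only predict that ``enough polynomial relations'' will over-determine the system and that a Gr\"obner computation will expose an impossible identity. The paper's argument is much more targeted and does not need any elimination machinery. One has \emph{three} independent expressions for each eigenvalue: the curvature expression of Lemma~\ref{lemma:Ricci-components}, the expression $-\varepsilon\lambda_i=-\varepsilon\beta_i'-\beta_i^2-2\varepsilon J$ coming from $R_{1i1i}$ via \eqref{eq:Rnf}, and the expression $-\varepsilon\lambda_i=-\varepsilon\frac{h'}{h}\beta_i-2\varepsilon J$ coming directly from \eqref{eq:vacuum-Einstein-field-equations-2}. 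Adding the first two for a difference $\lambda_i-\lambda_j$ kills the derivatives $\beta_i'$, and comparing with the third yields two distinct formulas for $h'/h$; equating them solves explicitly for $\Gamma=\Gamma_{342}\Gamma_{432}=-\tfrac{(a-b)(a-c)(b-c)^2}{4P}$ with $P>0$. The contradiction is then a \emph{sign} obstruction: the Codazzi relation $(\beta_i-\beta_k)\Gamma_{jik}=(\beta_j-\beta_k)\Gamma_{ijk}$ gives $(\Gamma_{432})^2=\frac{c-a}{b-a}\,\Gamma=-\frac{(a-c)^2(b-c)^2}{4P}<0$, which is impossible for a real number.

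This points to a concrete defect in your plan beyond incompleteness: by declaring that ``effectively there is a single free function'' $\Pi=\Gamma_{342}\Gamma_{432}$, you discard exactly the information the contradiction lives on. The system in the variables $\beta_2,\beta_3,\beta_4,\Pi$ is \emph{not} polynomially inconsistent; what fails is that $\Pi$ must simultaneously equal a manifestly negative quantity and be expressible as $\frac{b-a}{c-a}(\Gamma_{432})^2\geq 0$ (up to the sign of $\frac{b-a}{c-a}$, which the explicit formula controls). An elimination that only sees $\Pi$ cannot detect this reality constraint. Your separate treatment of the branch $\Pi=0$ is also both unnecessary (the explicit formula for $\Gamma$ already rules it out when the $\beta_i$ are distinct, and the sign argument covers all cases at once) and not justified as written, since Lemma~\ref{lemma:mult-warping-constant} is proved for a multiply warped product with a two-dimensional fiber of constant curvature and does not apply verbatim to a product of three one-dimensional fibers.
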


\begin{proof}
	
	In addition to Lemma~\ref{lemma:Ricci-components}, we have two more possibilities in order to express these components of the Ricci tensor. The first one is using the expression $R_{1i1j}=-\varepsilon\varepsilon_i(\lambda_i-2J)\delta_{ij}$ given by \eqref{eq:Rnf}, which yields
	\begin{equation}\label{eq:Ricci-1}
		-\varepsilon\lambda_i=\varepsilon_i R_{i1i1}-2\varepsilon J=-\varepsilon\beta_i'- \beta_i^2- 2\varepsilon J.
	\end{equation}
	The other option is using the weighted Einstein equation  \eqref{eq:vacuum-Einstein-field-equations-2} itself, so we get
	\begin{equation}\label{eq:Ricci-2}
		-\varepsilon \lambda_i=-\varepsilon \frac{h'}{h}\beta_i-2\varepsilon J.
	\end{equation}
	For the sake of clarity, let $\beta_2=a$, $\beta_3=b$, $\beta_4=c$, and let $\Gamma=\Gamma_{342}\Gamma_{432}$. Then, we can take the expressions given both by Lemma~\ref{lemma:Ricci-components} and by \eqref{eq:Ricci-1} for the difference $R_{22}-R_{33}$ and add them to find
	\[
	-2\varepsilon(\lambda_2-\lambda_3)=(a-b)c-2\frac{a+b-2c}{b-c}\Gamma,
	\]
	while taking the expression given by \eqref{eq:Ricci-2} gives 
	\[
	-2\varepsilon(\lambda_2-\lambda_3)=-2\varepsilon (a-b)\frac{h'}{h}.
	\]
	Equating both expressions yields a first value for $h'h^{-1}$,
	\[
	-\varepsilon \frac{h'}{h}=\frac{c}{2}-\frac{a+b-2c}{(a-b)(b-c)}\Gamma.
	\]
	By the same process, using the components $\lambda_2$ and $\lambda_3$, we have another expression for $h'h^{-1}$:
	\[
	-\varepsilon \frac{h'}{h}=\frac{b}{2}-\frac{a+c-2b}{(b-c)(c-a)}\Gamma.
	\]
	We can now use both values to solve for $\Gamma$ in terms of $a$, $b$ and $c$. Indeed, take 
	\[
	\begin{array}{rcl}
		P&=&a^2+b^2+c^2-ac-ab-bc \\
		\noalign{\medskip}
		&=&\frac{1}2(a-b)^2+\frac{1}2(a-c)^2+\frac{1}2(b-c)^2\geq 0,
	\end{array}
	\] 
	with equality only being achievable when $a=b=c$. Then, we have
	\[
	\Gamma=-\frac{(a-b)(a-c)(b-c)^2}{4P}.
	\]
	Now, consider that $(\beta_4-\beta_2)\Gamma_{34}^2=(\beta_3-\beta_2)\Gamma_{43}^2$. Thus, we can write
	\[
	(\Gamma_{432})^2=\Gamma_{432}\Gamma_{432}=\frac{c-a}{b-a}\Gamma_{342}\Gamma_{432}=-\frac{(a-c)^2(b-c)^2}{4P}.
	\]
	It follows that either $a=c$ or $b=c$, which is a contradiction. Thus, $\beta_2$, $\beta_3$ and $\beta_4$ cannot be pairwise distinct, and the same holds for $\lambda_2$, $\lambda_3$ and $\lambda_4$.
\end{proof}


\begin{thebibliography}{99}


\bibitem{Bla}
E. Blancheton, Equations aux variations de la Relativité Générale, {\it C. R. Acad. Sci. Paris} {\bf 245}, 284--287 (1957).

\bibitem{Boucetta}
M. Boucetta, A. Meliani and A. Zeghib; 
Kundt Three Dimensional Left Invariant Spacetimes. {\it J. Math. Phys.} {\bf 63} (2022), 112501.

\bibitem{Bourguignon}
J. P. Bourguignon;
Une stratification de l'espace des structures riemanniennes. 
{\it Compos. Math.} {\bf 30} (1975), 1--41. 


\bibitem{BV-GR-GF}
M. Brozos-V\'azquez, E. Garc\'ia-R\'io, and S. Gavino-Fern\'andez; Locally conformally flat Lorentzian quasi-Einstein manifolds, {\it Monatsh. Math.} {\bf 173} (2) (2014), 175--186.


\bibitem{Brozos-Loc-Conf-Flat}
M. Brozos-V\'azquez, E. Garc\'ia-R\'io, and R. V\'azquez-Lorenzo; 
Some remarks on locally conformally flat static space-times. {\it J. Math. Phys.} {\bf 46} (2005), 11pp.


\bibitem{Brozos-Mojon}
M. Brozos-V\'azquez and D. Moj\'on-\'Alvarez;
Vacuum Einstein field equations  in smooth metric measure spaces: the isotropic case. {\it Class. Quantum Grav.} \textbf{39} (13) (2022) 135013, 20 pp.

\bibitem{Brozos-Mojon-prwaves}
M. Brozos-Vázquez and D. Mojón-Álvarez;
The vacuum weighted Einstein field equations on $pr$-waves.
arXiv:2407.10535 [math.DG] (2024).

\bibitem{cahen-wallach}
M. Cahen and N. Wallach; Lorentzian symmetric spaces. {\it Bull. Am. Math. Soc.} {\bf 76} (1970), 585--591. 

\bibitem{cao-chen}
H.-D. Cao and Q. Chen; On Bach-flat gradient shrinking Ricci solitons, {\it Duke
Math. J.} {\bf 162} (6) (2013), 1149--1169.

\bibitem{chow2010kundt}
D. D. Chow, C. Pope, and E. Sezgin; Kundt spacetimes as solutions of topologically massive gravity. {\it Class. Quantum Grav.}, {\bf 27},  No. 10 (2010), p. 105002.

\bibitem{Kundt-spacetimes}
A. Coley, S. Hervik, G. Papadopoulos, and N. Pelavas;
Kundt spacetimes. {\it Class. Quantum Grav.} {\bf 26}, No. 10 (2009) 105016.



\bibitem{Cox-Little-Oshea}
D. Cox, D. Little and D. O’Shea: 
{\it Ideals, Varieties, and Algorithms. An Introduction to Computational Algebraic Geometry and Commutative Algebra.} 
Undergraduate Texts in Mathematics, Springer, Cham (2015).


\bibitem{Derdzinski-Codazzi}
A. Derdziński;
 Classification of certain compact Riemannian manifolds with harmonic curvature and nonparallel Ricci tensor, {\it Math. Z.} {\bf 172} (1980), 273--280.


\bibitem{Dobarro-Unal}
F. Dobarro and B. Ünal;
 Curvature of multiply warped products, {\it J. Geom. Phys.} {\bf 55} (1) (2005), 75--106.
 



\bibitem{Fischer-Marsden}
A. E. Fischer and J. E. Marsden;
Linearization stability of nonlinear partial differential equations. {\it Differ. Geom., Proc. Symp. Pure Math.} {\bf 27}, Part 2, Stanford 1973, (1975),  219--263. 




\bibitem{Kim-Shin}
J. Kim and J. Shin;
Four-dimensional static and related critical spaces with harmonic curvature . {\it Pacific J. Math.} {\bf 295} (2) (2018), 429--462


\bibitem{Kobayashi}
O. Kobayashi; A differential equation arising from scalar curvature function. {\it J. Math. Soc. Japan} {\bf 34}  (1982), 665--675. 

\bibitem{Lafontaine}
J. Lafontaine; Sur la g\'eom\'etrie d'une g\'en\'eralisation de l'\'equation diff\'erentielle d'Obata. {\it
J. Math. Pures Appl.}, IX. S\'er. 62 (1983),  63--72. 

\bibitem{leistner}
Th. Leistner; Conformal holonomy of C-spaces, Ricci-flat, and Lorentzian manifolds. {\it Differential Geom. Appl.} {\bf 24}, no. 5, (2006), 458--478. 

\bibitem{Lic2}
A. Lichnerowicz,  Propagateurs et  commutateurs en relativité générale, {\it Inst. Hautes Etudes Sci. Publ. Mat.} {\bf 10}, 293---344 (1961).


\bibitem{Lovelock}
D. Lovelock; The Einstein tensor and its generalizations. 
\emph{J. Math. Phys.} {\bf 12}, (1971), 498--501. 

\bibitem{Marsden}
A. E. Fischer and J. E. Marsden; Deformations of the scalar curvature, {\it Duke Mathematical Journal}, {\bf 42} (3) (1975), 519--547.



\bibitem{Oneill}
B. O'Neill;
\emph{Semi-Riemannian geometry. With applications to relativity.} 
Pure and Applied Mathematics, \textbf{103}, 
Academic Press, Inc., New York, 1983.

\bibitem{podolsky2009general}
J. Podolsk\`y and M. \^Zofka; General Kundt spacetimes in higher dimensions. {\it Class. Quamtum Grav.} {\bf 26} 10, (2009), 105008.

\bibitem{Ponge-Twisted}
 R. Ponge, and H. Reckziegel; 
Twisted products in pseudo-Riemannian geometry, {\it Geom. Dedicata} {\bf 48} (1993), 15–25.

\bibitem{Qing-Yuan}
J. Qing and W. Yuan; 
A note on static spaces and related problems. {\it J. Geom. Phys.} {\bf 74} (2013), 18–27.

\bibitem{Quiros}
I. Quiros {\it et al.};
Brans–Dicke Galileon and the variational principle. {\it Eur. J. Phys.} {\bf 37} (5) (2016), 055605.

\bibitem{Shen}
Y. Shen; A note on Fischer-Marsden's conjecture, {\it Proc. Amer. Math. Soc.} {\bf 125} (3) (1997), 901--905.

\bibitem{stephani-et-al}
H. Stephani, D. Kramer, M. MacCallum, C. Hoenselaers and E. Herlt;
{\it Exact solutions of Einstein’s field equations }.
Cambridge Monographs on Mathematical Physics. Cambridge University Press (2009). 


\end{thebibliography}
\end{document}